\newcommand{\Q}{\mathbb{Q}}
\newcommand{\Z}{\mathbb{Z}}
\newcommand{\C}{\mathbb{C}}
\newcommand{\R}{\mathbb{R}}
\newcommand{\SL}{\mathrm{SL}}
\renewcommand{\sl}{\mathrm{sl}}
\newcommand{\calC}{\mathcal{C}}
\newcommand{\calV}{\mathcal{V}}
\newcommand{\calX}{\mathcal{X}}
\newcommand{\calN}{\mathcal{N}}
\newcommand{\calM}{\mathcal{M}}
\newcommand{\Mod}{\mathrm{Mod}}
\newcommand{\PMod}{\mathrm{PMod}}
\newcommand{\SU}{\mathrm{SU}}
\newcommand{\SO}{\mathrm{SO}}
\newcommand{\calO}{\mathcal{O}}
\renewcommand{\S}{\mathrm{S}}
\DeclareMathOperator{\End}{End}
\DeclareMathOperator{\Tr}{Tr}
\DeclareMathOperator{\Hom}{Hom}
\DeclareMathOperator{\Sign}{Sign}
\DeclareMathOperator{\Res}{Res}
\def\calP{\mathcal{P}}
\def\calQ{\mathcal{Q}}
\newtheorem{Theorem}{Theorem}[section]
\newtheorem{theoremA}{Theorem}
\theoremstyle{definition}
\newtheorem{Lemma}[Theorem]{Lemma}
\newtheorem{Definition}[Theorem]{Definition}
\newtheorem{Proposition}[Theorem]{Proposition}
\newtheorem{Remark}[Theorem]{Remark}
\title{Reidemeister torsion of two-bridge knots and signatures of TQFT}
\author{Julien March\'{e}}
\author{Seokbeom Yoon}
\date{\today}
\begin{document}
\begin{abstract}
We establish an explicit relation between the adjoint Reidemeister torsion of the two-bridge knot $K(p,q)$ at any parabolic representation and the Frobenius algebra governing the signatures of SU$_2$-TQFT vector spaces at the root $\zeta=\exp(i\pi q/p)$. As applications, (a) we prove that the inverse sum of torsions is constant (i.e., independent of $p$ and $q$); and (b) we show that along sequences of roots of the form 
$\zeta_n = \exp\left(i\pi\tfrac{a+bn}{c+dn}\right)$, 
the signatures have the same asymptotic behavior as the Verlinde formula. 
\end{abstract}
\maketitle

\tableofcontents

\section{Introduction}

Let $p$ and $q$ be coprime odd integers satisfying $0<q<p$ and denote by $K(p,q)$ the two-bridge knot with these parameters. Independently, setting $\zeta=\exp(i \pi q/p)$, one can consider the $\SU_2$-TQFT with defining root $\zeta$. For any genus $g$, it gives a Hermitian vector space $\calV_\zeta(S_g)$ endowed with a projective unitary representation of the mapping class group $\Mod(S_g)$. This article establishes a tight relation between, on the one hand, the space of parabolic $\SL_2(\C)$-representations of $\pi_1(S^3\setminus K(p,q))$ and the adjoint Reidemeister torsion, and, on the other hand, a Frobenius algebra governing the signatures of the various vector spaces $\calV_\zeta(S_g)$. It completes the work initiated in \cite{S2P} and provides applications both to the computation of the Reidemeister torsion itself and to the study of the asymptotic behavior of the signatures for a sequence $\zeta_n$ of roots converging to a given root $\zeta$. 

In both the introduction and the main part of the paper, we have tried to separate, as much as possible, the knot-theoretic aspects from the TQFT aspects for the reader’s benefit. Before proceeding further, we caution that our proofs are computational in nature: we do not yet have a conceptual explanation for the relations we uncover. Finding such an explanation remains an interesting open problem.

\subsection{About two-bridge knots}

We denote by $\calX$ the set of conjugacy classes of irreducible representations $\rho:\pi_1(S^3\setminus K(p,q))\to \SL_2(\C)$ which send any (hence every) meridian to a non-trivial matrix with trace $2$. This set is a finite affine algebraic set defined over $\Z$, hence its coordinate ring $\Q[\calX]$ is a finite commutative $\Q$-algebra. Riley showed in \cite{Riley} that it is semi-simple. 

Given $[\rho]\in \calX$ and any $n\ge 1$, we define $\rho_n$ to be the composition of $\rho$ with the $(n+1)$-dimensional irreducible representation of $\SL_2(\C)$. We refer to $\rho=\rho_1$ as the tautological representation and to $\rho_2$ as the adjoint representation. 
By choosing appropriate generators of the twisted cohomology groups $\mathrm{H}^*(S^3\setminus K(p,q),\rho_n)$, one can define the Reidemeister torsion $\tau_n(\rho)$ as a complex number depending on $[\rho] \in \calX$. By construction, the map $\rho\mapsto \tau_n(\rho)$ is conjugation-invariant and algebraic, yielding an element $\tau_n \in \Q[\calX]$. Throughout the paper, we consider $\tau_n$ up to sign to avoid unnecessary complications. 

It will be technically more convenient to replace $\calX$ with a ramified 2-cover $\hat{\calX}$, and hence $\Q[\calX]$ with the corresponding quadratic extension $\Q[\hat{\calX}]$. Topologically, this amounts to recalling that $K(p,q)$ is doubly amphichiral, meaning that there exist two commuting involutions of $S^3$ which preserve $K(p,q)$ while reversing its orientation. Using this fact, we construct an orbifold quotient of $S^3\setminus K(p,q)$ whose orbifold fundamental group $G$ fits in the (split) exact sequence 
$$0\to \pi_1(S^3\setminus K(p,q))\to G\to Q_8\to 0$$
where $Q_8$ is the usual quaternion group of order $8$. Note that similar groups have already been considered in \cite{LS}. Then $\hat{\calX}$ is the set of conjugacy classes of representations $\rho:G\to \SL_2(\C)$ whose restriction to $\pi_1(S^3\setminus K(p,q))$ defines an element of $\calX$. 

Concretely, we will show that there is an isomorphism $\Q[\hat{\calX}] \cong \Q[X]/(P_{p-1}(X))$ where $P_k \in \Z[X]$ is the sequence of polynomials given by 
$$P_0=1,\ P_1=X\text{ and }P_{k}=\epsilon_k XP_{k-1}+P_{k-2}\text{ where }\epsilon_k=(-1)^{\lfloor \frac{kq}{p}\rfloor}.$$
We set $V=\Q[\hat{\calX}]$ and $V^+=\Q[\calX]$ to simplify notation, and denote by $x$ and $\calP_k$ the image of $X$ and $P_k$, respectively, in $V$. It turns out that $V^+$ is spanned by $\calP_k$ for even $0 \leq k \leq p-3$, and that $V$ is the quadratic extension of $V^+$ obtained by adjoining $\calP_{p-2}$, which is a square root of $-1$ in $V$.

The recurrence relation defining the sequence $P_k$ is typical for orthogonal polynomials. Indeed, there exists a non-degenerate bilinear form $\eta:V\times V\to \Q$ for which the basis $\calP_0,\ldots,\calP_{p-2}$ is orthogonal. It is given by $\eta(x,y)=\epsilon(xy)$ where $\epsilon:V\to \Q$ is the linear form satisfying $\epsilon(\calP_0)=1$ and $\epsilon(\calP_k)=0$ for $0<k<p-1$. 
The data of $\eta$, or equivalently of $\epsilon$, endows $V$ with the structure of a commutative semi-simple Frobenius algebra. 
At first glance, this appears unrelated to the topological definition of $V$, however, one of our main theorems is that this additional structure arises from the adjoint Reidemeister torsion.
To explain this, recall that for any Frobenius algebra $(V,\eta)$, one defines an element $\Omega=\sum x_iy_i$ where $\eta^{-1}=\sum x_i\otimes y_i\in V\otimes V$. When $V$ is semi-simple, $\epsilon$ can be recovered from $\Omega$ by the formula $\epsilon(x)=\Tr_{V/\Q}(\Omega^{-1}x)$.

\begin{theoremA} [Theorems~\ref{thm.simp}, \ref{toradj} and Equation~\eqref{toromega}]
    Let $0<q<p$ be coprime odd integers and $V$ be the Frobenius algebra described above with associated element $\Omega$. Denoting by $\tau_n$ the Reidemeister torsion of $S^3\setminus K(p,q)$ with coefficients $\rho_n$, one has
    $$\tau_1=\frac{2}{x^2\calP_{\ell-1}}\quad \text{and} \quad \tau_2=\frac{\Omega}{4x^2\calP_{\ell-1}^2}$$
    where $0<\ell<p$ is the unique odd integer satisfying $q\ell \equiv \pm 1 \pmod{p}$.
\end{theoremA}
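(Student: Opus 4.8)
The statement is a synthesis of the three earlier results cited, so my plan is to prove each of them and then assemble. Throughout I would work with Riley's two-bridge presentation $\pi_1(S^3\setminus K(p,q))=\langle a,b\mid aw=wb\rangle$, where $a,b$ are meridians and $w$ is Riley's two-bridge word, whose letters alternate between $a^{\pm1}$ and $b^{\pm1}$ with exponents governed by the signs $\epsilon_k=(-1)^{\lfloor kq/p\rfloor}$, and parametrize parabolic representations by $\rho(a)=\bigl(\begin{smallmatrix}1&1\\0&1\end{smallmatrix}\bigr)$ and $\rho(b)=\bigl(\begin{smallmatrix}1&0\\-u&1\end{smallmatrix}\bigr)$, so that $\rho(a)\rho(w)=\rho(w)\rho(b)$ becomes the vanishing of the Riley polynomial $\phi(u)$. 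For the ring isomorphism (Theorem~\ref{thm.simp}) I would compute the partial products $\rho(b^{\epsilon_1})$, $\rho(b^{\epsilon_1}a^{\epsilon_2})$, \dots\ and check that, after the change of variable $u\mapsto x$ coming from the quaternionic $2$-cover --- so that $x^{2}$ is, up to a unit of $V^{+}$, the Riley parameter --- their entries obey exactly the three-term recurrence $P_k=\epsilon_kXP_{k-1}+P_{k-2}$; the Riley polynomial then reads off as $P_{p-1}$ up to a unit, giving $\Q[\hat\calX]\cong\Q[X]/(P_{p-1}(X))$. The symmetry $\epsilon_{p-k}=\epsilon_k$ together with the amphichiral involutions identifies $V^{+}=\Q[\calX]$ with the even part and yields $\calP_{p-2}^{2}=-1$, while Riley's semisimplicity makes $V$ a product of number fields, so the trace form is non-degenerate and the Frobenius data $(\eta,\epsilon,\Omega)$ are well-defined with $\calP_0,\dots,\calP_{p-2}$ orthogonal for $\eta$.

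For $\tau_1$ (part of Theorem~\ref{toradj}), the $\rho_1$-twisted complex of the $2$-complex of the presentation is $0\to\C^2\xrightarrow{\partial_2}\C^2\oplus\C^2\xrightarrow{\partial_1}\C^2\to 0$, with $\partial_2$ built from the Fox derivatives $\rho_1(\partial r/\partial a)$, $\rho_1(\partial r/\partial b)$ of $r=awb^{-1}w^{-1}$ and $\partial_1=(\rho_1(a)-1,\ \rho_1(b)-1)$. Since $\rho_1(a)-1$ is not invertible (the meridian is parabolic), the torsion is not computed naively; once the distinguished generators of $H^1$ and $H^2$ used in the definition of $\tau_1$ are fixed, it becomes an explicit ratio of determinants of blocks extracted from $\partial_2$ and $\partial_1$, the essential ingredient being $\det\rho_1(\partial w/\partial b)$. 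Unwinding $\partial w/\partial b$ along the same matrix recursion as above makes it telescope; the combinatorial point is that the Fox derivative reads the word from the far end, so that under the identification $K(p,q)=K(p,\ell)$ with $q\ell\equiv\pm1\pmod p$ the index $k$ is traded for its dual and the result collapses to a unit times $\calP_{\ell-1}$, while the denominator contributes $x^{2}$; collecting the remaining scalar gives $\tau_1=2/(x^2\calP_{\ell-1})$.

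For $\tau_2$ and Equation~\eqref{toromega} there are two sub-steps. First I would compute $\tau_2$ concretely, redoing the Fox-calculus computation with the $3$-dimensional $\rho_2=\mathrm{Sym}^2\rho_1$ (equivalently, invoking the known two-bridge formula for Porti's adjoint torsion in terms of $\phi'(u)$ and a peripheral correction term); after the chain rule $u\mapsto x$ this presents $\tau_2$ as $P_{p-1}'$ times an explicit element of $V$ built from $x$, $\calP_{\ell-1}$ and $\calP_{p-2}$. Second I would identify the odd element $P_{p-1}'$ of $V$ with $\Omega$: since $\eta$ makes the $\calP_k$ orthogonal, $\Omega=\sum_k\calP_k^{2}/h_k$ with $h_k=\eta(\calP_k,\calP_k)$ read off the recurrence, and the confluent Christoffel--Darboux identity for the family $P_k$ --- equivalently the Gaussian-quadrature/residue description, $\epsilon(g)$ being proportional to $\Tr_{V/\Q}\!\bigl(g/(P_{p-1}'\calP_{p-2})\bigr)$ --- gives $\Omega=(\mathrm{const})\cdot P_{p-1}'\calP_{p-2}$. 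Substituting into the expression from the first sub-step and tracking the numerical constant up to sign yields $\tau_2=\Omega/(4x^2\calP_{\ell-1}^2)$; Equation~\eqref{toromega} is precisely this identification of $\Omega$ with $P_{p-1}'\calP_{p-2}$ up to a constant.

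The main obstacle is the second sub-step: matching the a priori unstructured expression for the adjoint torsion (essentially $P_{p-1}'$) with the Frobenius element $\Omega$ up to the precise factor $4x^2\calP_{\ell-1}^2$. This is the one place where the orthogonal-polynomial/Frobenius structure, which looks foreign to the knot theory, must be reconciled with torsion, and it relies on the Christoffel--Darboux identity in $\Q[X]/(P_{p-1})$ together with careful bookkeeping of the $\calP_{\ell-1}$-powers inherited from the $q\leftrightarrow\ell$ symmetry of Step~2 and of all the global scalars. Step~1 and the telescoping in Step~2 are lengthy but essentially routine once the three-term recurrence is in hand; pinning down the constants $2$ and $4$ (up to sign) is the only other delicate point, and can be verified directly on small cases such as the trefoil $K(3,1)$ or the figure-eight $K(5,3)$.
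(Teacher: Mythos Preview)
Your outline has the right overall architecture but contains two factual errors and one genuine gap.

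First, for $\tau_1$ the twisted chain complex \emph{is} acyclic: there is no need to fix generators of $H^1$ and $H^2$. The paper shows directly (Lemma~2.3) that $D^0$ has rank~$2$ and $D^1$ has rank~$2$, so the Cayley formula gives $\tau_1=\Delta_1/\Delta_0$ with $\Delta_0=x^2$ and $\Delta_1$ an explicit expression in the $\calP_k$'s and $\calQ_{p-2}$. You have confused this with the adjoint case, where the complex is genuinely non-acyclic and a homological marking is required.

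Second, and more seriously, your mechanism for the simplification $\tau_1=2/(x^2\calP_{\ell-1})$ is not right. The raw output of the Fox computation is
\[
\tau_1=\frac{1}{x^2}\Bigl(\calQ_{p-2}-x\sum_{k=1}^{(p-1)/2}\epsilon_{2k-1}(\calP_{2k-1}+\calP_{2k-3})\Bigr),
\]
and nothing here ``telescopes'' or reduces by a simple $q\leftrightarrow\ell$ reindexing. The paper proves $\calP_{\ell'-1}\bigl(\calQ_{p-2}-xS\bigr)=2$ via a chain of lemmas (periodicity $\calP_{k+p}=(-1)^k\iota\calP_k$, the reflection $\calP_{-2-k}=\calP_k$, the shift identity $\calP_{\ell'-1}\calQ_{p-2}=-\iota(\calP_{\ell'}+\calP_{\ell'-2})$, and a product formula $(\calP_{\ell'}-\calP_{\ell'-2})\calP_k=\pm(\calP_{\ell'+k}-\calP_{\ell'-k-2})$), followed by a case-by-case cancellation over $\Z/2p\Z$. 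This is the technical heart of the $\tau_1$ formula and your sketch does not supply it.

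For $\tau_2$ your route via $P_{p-1}'$ and Christoffel--Darboux is legitimate and is in fact what the paper invokes later (citing \cite{S2P} for $\Omega=-\calP_{p-2}P_{p-1}'$); the paper's own Section~2.5 argument instead uses the elementary symmetry $\Omega^+=-\Omega^-$ to get $\Omega=-2\sum_k\epsilon_{2k}\calP_{2k-1}^2$, which matches the Fox output directly. Either works. However, you have not explained where the factor $\calP_{\ell-1}^2$ comes from. In the paper it arises from the homological marking of $H^2$: one must compute the map $f_2$ induced by the inclusion $\partial M\hookrightarrow M$, and this requires writing the commutator $[u,w^*w]$ explicitly in the free group as $[g,r][r,w^*]$ for a specific word $g$ of length~$\ell'$ (Lemma~2 of \cite{vol2ponts}); the entry $\calP_{\ell-1}^2$ then drops out of $\rho_2(g)^{-1}\mathfrak e_1$. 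Your ``peripheral correction term'' and ``$\calP_{\ell-1}$-powers inherited from the $q\leftrightarrow\ell$ symmetry'' do not account for this, and without it the denominator $4x^2\calP_{\ell-1}^2$ is unmotivated.
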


The appearance of $\calP_{\ell-1}$ deserves some explanation. By the classification of two-bridge knots, $K(p,q)$ and $K(p,\ell)$ are isotopic. Denoting by $\calX^*,V^*,\ldots$ all the previous data associated to $K(p,\ell)$, it follows that $\hat{\calX}$ and $\hat{\calX}^*$ are isomorphic as algebraic varieties, hence $V$ and $V^*$ are isomorphic as algebras. More precisely, we will show that the isomorphism $V^*\leftrightarrow V$ sends $x^*$ to $x\calP_{\ell-1}$ and $x$ to $x^*\calP^*_{q-1}$. 
As $\tau_n$ is a topological invariant, the formulas provided in $V$ and $V^*$ should match. This is obvious for $\tau_1$, if one writes it as $\tau_1=\frac{2}{xx^*}$. In the case of $\tau_2$, we have instead:
$$\tau_2=\frac{\Omega}{4x(x^*)^2}=\frac{\Omega^*}{4x^*x^2}.$$
This yields the reciprocity formula $x\Omega=x^*\Omega^*$, showing that the Frobenius algebra structures on $V$ and $V^*$ are indeed distinct. In addition, this formula will play a crucial role in our study of the asymptotics of the signature of TQFT.

As a first application of our formulas, we will prove the following two equations, where the latter extends the result of \cite{Yoon} to the parabolic case and has a quantum-physical motivation; see \cite{GKY}. 
Note that the condition $q \neq 1$ in the following theorem implies that the two-bridge knot $K(p,q)$ is hyperbolic, and that the theorem fails when $q=1$.

\begin{theoremA}[Theorems~\ref{thm.invtau2} and \ref{thm.invtau1}]
Let $0<q<p$ be coprime odd integers with $q\neq1$ and $\calX$ be the parabolic character variety of $S^3\setminus K(p,q)$. Then one has 
$$\sum_{\rho\in \calX}\frac{1}{\tau_1(\rho)}=\pm 1 \quad \text{and} \quad  \sum_{\rho\in \calX}\frac{1}{\tau_2(\rho)}=0.$$
\end{theoremA}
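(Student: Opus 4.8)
The plan is to turn both equalities into finite computations inside the Frobenius algebra $V$ and then execute them using the three-term recurrence for the $\calP_k$. Since $V^+=\Q[\calX]$ is a finite semisimple $\Q$-algebra, its geometric points are exactly its $\Q$-algebra homomorphisms to $\C$, so $\sum_{\rho\in\calX}f(\rho)=\Tr_{V^+/\Q}(f)$ for $f\in V^+$; and because $V=V^+\oplus V^+\calP_{p-2}$ with $\calP_{p-2}^2=-1$, multiplication by an element of $V^+$ acts identically on the two summands, giving $\Tr_{V/\Q}(f)=2\Tr_{V^+/\Q}(f)$. By Theorem~A one has $1/\tau_1=\tfrac12 x^2\calP_{\ell-1}$ and $1/\tau_2=4x^2\calP_{\ell-1}^2/\Omega$, and these lie in $V^+$ (here $\ell$ is odd, so $\calP_{\ell-1}\in V^+$, and $\Omega$ is a unit since $V$ is semisimple). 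Feeding this together with the identity $\epsilon(a)=\Tr_{V/\Q}(\Omega^{-1}a)$ into the above gives
$$\sum_{\rho\in\calX}\frac{1}{\tau_1(\rho)}=\tfrac14\,\Tr_{V/\Q}\!\big(x^2\calP_{\ell-1}\big),\qquad \sum_{\rho\in\calX}\frac{1}{\tau_2(\rho)}=2\,\epsilon\!\big(x^2\calP_{\ell-1}^2\big).$$

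Both right-hand sides are attacked through the linearization $x\calP_{k-1}=\epsilon_k(\calP_k-\calP_{k-2})$ (a rewriting of the recurrence) together with the key arithmetic identity $\epsilon_{\ell+1}=-\epsilon_{\ell-1}$. The latter amounts to saying that $\lfloor(\ell+1)q/p\rfloor$ and $\lfloor(\ell-1)q/p\rfloor$ have opposite parity, i.e. that the interval $\big((\ell-1)q/p,(\ell+1)q/p\big]$ contains an odd number of integers; writing $q\ell=mp\pm1$ one checks, using $1<q<p-1$ (automatic from $q$ odd, coprime to $p$, and $q\neq1$), that this interval contains exactly the single integer $m$. This is the one place where the hypothesis $q\neq1$ enters, and it is precisely what fails for $K(p,1)$.

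For the $\tau_2$ identity this already suffices. Linearizing gives $x\calP_{\ell-1}=\epsilon_\ell(\calP_\ell-\calP_{\ell-2})$, so, by orthogonality of the $\calP_k$ with respect to $\eta$,
$$\epsilon\!\big(x^2\calP_{\ell-1}^2\big)=\epsilon(\calP_\ell^2)-2\,\epsilon(\calP_\ell\calP_{\ell-2})+\epsilon(\calP_{\ell-2}^2)=h_\ell+h_{\ell-2},\qquad h_k:=\eta(\calP_k,\calP_k).$$
Applying $\eta(\,\cdot\,,\calP_{k-2})$ to the recurrence $\calP_k=\epsilon_k x\calP_{k-1}+\calP_{k-2}$ and using orthogonality yields $h_{k-2}=-\epsilon_k\epsilon_{k-1}h_{k-1}$; a short induction from $h_0=1$ then gives $h_k=(-1)^k\epsilon_{k+1}$. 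Hence $h_\ell+h_{\ell-2}=(-1)^\ell(\epsilon_{\ell+1}+\epsilon_{\ell-1})=0$, and $\sum_\rho 1/\tau_2(\rho)=0$.

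For the $\tau_1$ identity, linearizing twice and using $\epsilon_{\ell+1}+\epsilon_{\ell-1}=0$ makes the coefficient of $\calP_{\ell-1}$ vanish, leaving $x^2\calP_{\ell-1}=\epsilon_{\ell-1}\epsilon_\ell(\calP_{\ell-3}-\calP_{\ell+1})$ in $V$ (with $\calP_{p-1}=0$ in the boundary case $\ell=p-2$). Therefore $\sum_\rho 1/\tau_1(\rho)=\pm\tfrac14\big(\Tr_{V/\Q}(\calP_{\ell-3})-\Tr_{V/\Q}(\calP_{\ell+1})\big)$, and it remains to prove $\Tr_{V/\Q}(\calP_{\ell-3})-\Tr_{V/\Q}(\calP_{\ell+1})=\pm4$. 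I expect this last point to be the main obstacle. Unlike the $\tau_2$ case, substituting the three-term recurrence into the trace form only produces tautologies, so one must bring in the relation $\calP_{p-1}=0$: the reflection symmetry $\epsilon_j=\epsilon_{p-j}$ (valid because $q$ is odd) yields the identity $P_{p-1}=P_{p-1-k}P_k+P_{p-2-k}P_{k-1}$, which determines, via Newton's identities, the values $\Tr_{V/\Q}(\calP_k)=\sum_{P_{p-1}(\alpha)=0}P_k(\alpha)$; carrying this through and seeing the above difference collapse to the universal value $\pm4$ — equivalently, that the quantity $\tfrac12\sum_{\rho\in\calX}x(\rho)^2\calP_{\ell-1}(\rho)$ does not depend on $p$ or $q$ — is the computational core of the statement.
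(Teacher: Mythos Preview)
Your treatment of the $\tau_2$ identity is correct and essentially identical to the paper's proof: reduce to $\epsilon(x^2\calP_{\ell-1}^2)$, linearize $x\calP_{\ell-1}=\epsilon_\ell(\calP_\ell-\calP_{\ell-2})$, and use $\eta(\calP_k,\calP_k)=(-1)^k\epsilon_{k+1}$ together with $\epsilon_{\ell+1}+\epsilon_{\ell-1}=0$. Your justification of this last sign fact via the interval count is also the right idea.

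For the $\tau_1$ identity there is a genuine gap: you correctly reduce to $\Tr_{V/\Q}(\calP_{\ell-3})-\Tr_{V/\Q}(\calP_{\ell+1})=\pm4$, but you do not prove it, and the route you sketch (Newton's identities applied to the factorization $P_{p-1}=P_kP_{p-1-k}+P_{k-1}P_{p-2-k}$) is not clearly workable. Newton's identities give the power sums $\sum_\alpha\alpha^j$ from the coefficients of $P_{p-1}$, but those coefficients are themselves complicated sign-sums in the $\epsilon_k$'s; extracting $\sum_\alpha P_{\ell\pm?}(\alpha)$ and seeing a cancellation to $\pm4$ from that data would require exactly the kind of product/reflection identities you have not yet invoked.

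The paper's argument supplies precisely the missing mechanism. It stops after one linearization, writing $x^2\calP_{\ell'-1}=x(\calP_{\ell'-2}-\calP_{\ell'})$, and computes the trace of multiplication by $M=x(\calP_{\ell'-2}-\calP_{\ell'})$ in the orthogonal basis $\{\calP_k\}$ via $\Tr_{V/\Q}(M)=\sum_k\eta(\calP_k,M\calP_k)\,\eta(\calP_k,\calP_k)$. The key input, absent from your proposal, is the product formula
\[
(\calP_{\ell'}-\calP_{\ell'-2})\,\calP_k=\pm(\calP_{\ell'+k}-\calP_{\ell'-k-2}),
\]
proved by induction from the recurrence using $\epsilon_{\ell'+k}=\epsilon_k$, $\epsilon_{\ell'-k}=-\epsilon_k$. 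Combined with the periodicity $\calP_{k+2p}=\calP_k$ and the reflection $\calP_{-2-k}=\calP_k$ (both valid in $V$), this writes each $M\calP_k$ as a short sum of basis vectors; the orthogonality $\eta(\calP_i,\calP_j)\neq0\iff i\equiv j$ or $i+j\equiv -2\pmod{2p}$ then forces all but four diagonal contributions to vanish, and those four add to $\pm4$. The same product formula would equally well compute your $\Tr_{V/\Q}(\calP_{\ell\pm?})$ directly, so the gap in your argument is not the extra linearization but the absence of this lemma.
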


\subsection{About signatures of TQFT}

Fix an odd integer $p$ and let $\Q(\zeta)$ be the cyclotomic field of order $2p$, where $\zeta$ is a primitive $2p$-th root of unity. We set $\Lambda=\{0,1,\ldots,p-2\}$ and $\Lambda^+=\{0,2,\ldots,p-3\}$ which we use as ``colors'' indexing marked points. In this article, we consider the $\SU_2/\SO_3$-TQFT as a collection of $\Q(\zeta)$-vector spaces $\calV_p(S_{g,n};\lambda)$ where $S_{g,n}$ is a surface of genus $g$ with $n$ marked points and $\lambda\in \Lambda^n$ is a color of the marked points. These are endowed with an Hermitian form defined over $\Q(\zeta)$ and an action of the pure mapping class group $\PMod(S_{g,n})$ by projective isometries. 
A key property is that these vector spaces satisfy compatibility conditions under gluing. If $S$ is a surface and $\gamma\subset S$ is a simple closed curve disjoint from the marked points, then a new (possibly disconnected) surface $S'$ is obtained by cutting $S$ along $\gamma$, capping off the resulting boundaries with discs, and adding a new marked point in each disc. When $\gamma$ is non-separating, the compatibility means an isomorphism 
$$\calV_p(S;\lambda)=\bigoplus_{\mu\in \Lambda} \calV_p(S';\lambda,\mu,\mu)$$
which is compatible with both the Hermitian structure and the mapping class group action. There is a similar statement for the separating case. 

In this article, we embed $\Q(\zeta)$ into $\C$ by setting $\zeta=\exp(i\pi \tfrac{q}{p})$ and define $\C$-vector spaces as $\calV_\zeta(S_{g,n};\lambda)=\calV_p(S_{g,n};\lambda)\otimes \C$ whose signature is denoted by
$$\sigma_g (\tfrac{q}{p};\lambda)=\Sign \calV_\zeta(S_{g,n};\lambda).$$ 
Since any surface can be decomposed into pairs of pants, the aforementioned compatibility allows one to reduce the computation of the signature to the sphere with at most three marked points, by introducing the so-called signed Verlinde Frobenius algebra \cite{DM}. It is a $(p-1)$-dimensional vector space $V=\bigoplus_{k\in \Lambda} \Q e_k$ equipped with a symmetric bilinear form $\eta:V\times V\to \Q$ and a trilinear symmetric form $\omega:V\times V\times V\to \Q$, given by 
\begin{align*}
\eta(e_j,e_k)&=\sigma_0(\tfrac{q}{p};\,j,k)=\delta_{jk}(-1)^j\epsilon_{j+1} \,,\\
\omega(e_j,e_k,e_l)&=\sigma_0(\tfrac{q}{p};\,j,k,l)\in \{-1,0,1\}\,.
\end{align*}
Here $\delta_{jk}$ denotes the Kronecker delta.
It was shown in \cite{DM} that there is a unique structure of semi-simple algebra on $V$ so that one has $\omega(x,y,z)=\eta(xy,z)$ for any $x,y,z\in V$. Moreover, it was shown in \cite{S2P} that sending $e_k$ to $\calP_k$ defines an algebra isomorphism from $V$ to $\Q[\hat{\calX}]$. This is why we have already denoted it by $V$ in the first part of the introduction. The initial motivation of this article was to extend the isomorphism $V\simeq \Q[\hat{\calX}]$ to an isomorphism of Frobenius algebras. To achieve this, one needs to give a topological meaning to the element $\Omega$. It was a pleasant surprise that the adjoint Reidemeister torsion turns out to play exactly this role.

One can recovers the signatures from the Frobenius algebra $V$ by means of the following formula, which holds for any surface $S_{g,n}$ colored by $\lambda=(\lambda_1,\ldots,\lambda_n)$:
$$ \sigma_g(\tfrac{q}{p};\lambda)=\epsilon(\Omega^g e_{\lambda_1}\cdots e_{\lambda_n})=\Tr_{V/\Q}(\Omega^{g-1}e_{\lambda_1}\cdots e_{\lambda_n}).$$
There is a similar $\SO_3$-TQFT for which the set of colors is restricted to $\Lambda^+$. The corresponding Frobenius algebra is the subalgebra $V^+\subset V$ consisting of even colors. As $\Omega\in V^+$, this subalgebra naturally inherits a Frobenius algebra structure. One often prefers the $\SO_3$-TQFT because it has smaller dimension and better properties, such as irreducibility and the existence of an integral basis. In this article, however, we restrict to the $\SU_2$-case, as their signatures only differ by a power of $2$.  

\subsection{Asymptotics of the signature along roots of unity}

Let us first recall that the dimension of $\calV_p(S_g)$ is given by the famous Verlinde formula (we set $n=0$, i.e., we consider surfaces with no marked points from now on):
$$ \dim \calV_p(S_g)=\left(\frac{p}{2}\right)^{g-1}\sum_{k=1}^{p-1}\sin \big(\frac{k\pi}{p}\big)^{2-2g}.$$
One can easily deduce from the formula the asymptotic behavior 
$$\dim \calV_p(S_g)\underset{p\to\infty}{\sim} 2\left(\frac{p^3}{2\pi^2}\right)^{g-1} \zeta(2g-2),$$
which admits a beautiful interpretation; see, e.g., \cite{Witten}.
For motivational purposes, we briefly recall this. Regarding the vector space $\calV_p(S_g)$ as the geometric quantization of the character variety $$\calM(S_g)=\Hom(\pi_1(S_g),\SU_2)/\SU_2,$$ one can endow $\calM(S_g)$ with a complex structure and construct a holomorphic line bundle $L$ over $\calM(S_g)$ such that 
$$ \calV_p(S_g)\simeq \mathrm{H}^0(\calM(S_g),L^{p-2}).$$
Since $c_1(L)$ represents the Atiyah-Bott-Goldman symplectic (Kähler) structure on $\calM(S_g)$, the higher cohomology groups vanish for sufficiently large $p$, and the Hirzebruch-Riemann-Roch theorem yields
$$\dim \calV_p(S_g)=\int_{\calM(S_g)}e^{p \, c_1(L)}\operatorname{Todd}(X)\underset{p\to\infty}{\sim}p^{3g-3}\operatorname{Vol}(\calM(S_g)),$$
together with the non-trivial fact that the Verlinde formula is a polynomial in $p$.

It is then natural to ask whether a similar behavior occurs for the signature along sequences of roots of unity. Numerical experiments show that the answer depends strongly on the specific sequence of roots of unity. Our final main theorem provides an explicit asymptotic formula for the signatures.

\begin{theoremA}[Theorem~\ref{mainsign}]
Let $q_n=a+bn$ and $p_n=c+dn$, where $n$ is odd and $a,b,c,d \in \Z$ satisfy $ad-bc=1$ with $a,d$  odd, $b,c$ even, $0\le b<d$ and $c>0$.
Then for any $g\ge 2$, 
\begin{enumerate}
    \item the signature $\sigma_g(\frac{q_n}{p_n})$ is a polynomial in $n$, and
    \item under a technical condition \eqref{cond:H}, there exists a semi-simple $d$-dimensional Frobenius algebra $(W,\Omega_W)$ depending on $a,b,c,d$ such that 
$$\sigma_g \left(\frac{q_n}{p_n}\right)\underset{n\to\infty}{\sim}2\left(\frac{n^3}{2\pi^2}\right)^{g-1} \! \! \zeta(2g-2)\Tr_{W/\Q}(\Omega_W^{g-1}).$$
\end{enumerate}
\end{theoremA}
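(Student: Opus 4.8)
The plan is to turn the signature into a spectral sum over the character variety and then extract its asymptotics by means of the reciprocity formula of Theorem~A. Since $\sigma_g(\tfrac{q}{p})=\Tr_{V/\Q}(\Omega^{g-1})$ and $V=\Q[\hat\calX]=\Q[X]/(P_{p-1}(X))$ is semi-simple, one has
\[\sigma_g\!\left(\tfrac{q_n}{p_n}\right)=\sum_{\xi}\Omega_n(\xi)^{g-1},\]
where $\xi$ runs over the $p_n-1$ roots of $P_{p_n-1}$ (grouped into Galois orbits, so the sum is real) and $\Omega_n(\xi)$ is the value of the canonical element at the corresponding character. The substitution $X\mapsto iX$ turns the defining recurrence into the Chebyshev-type recurrence $\tilde P_k=\epsilon_k X\tilde P_{k-1}-\tilde P_{k-2}$, whose transfer matrices are elliptic on $|X|<2$; so the roots relevant to the asymptotics are of the form $X=2i\cos\theta$ and, as $n\to\infty$, accumulate near the two points $X=\pm2i$. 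The heart of the matter is that only these accumulating roots have $\Omega_n(\xi)$ of order $n^{3}$; they produce the asserted asymptotics just as in the Verlinde computation recalled in the introduction, whereas every other character contributes at most $O(n)$, so that the remainder is $O(n^{g})$, negligible once $g\ge2$ since then $3g-3>g$.

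To locate and estimate the accumulating roots we pass to the ``mirror'' description afforded by the reciprocity $x\Omega=x^{*}\Omega^{*}$. From $ad-bc=1$ one gets $dq_n-bp_n=1$, so $d$ is precisely the odd integer inverting $q_n$ modulo $p_n$; hence $K(p_n,q_n)=K(p_n,d)$, the starred data is that of $K(p_n,d)$, and under the algebra isomorphism $V_n\cong V_n^{*}$ one has $\Omega_n=\calP^{(n)}_{d-1}\,\Omega_n^{*}$, where $\calP^{(n)}_{d-1}=P_{d-1}(x)$ is a bounded-degree element and $\Omega_n^{*}$ is the image of the canonical element of $K(p_n,d)$. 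This is advantageous for two reasons. First, $\calP^{(n)}_{d-1}$ stabilises: since $q_n/p_n\to b/d$ strictly from above and $kb/d\notin\Z$ for $0<k<d$, its defining signs $\epsilon_k=(-1)^{\lfloor kq_n/p_n\rfloor}$ converge to $(-1)^{\lfloor kb/d\rfloor}$, so $\calP^{(n)}_{d-1}\to\calP^{\infty}_{d-1}$, a fixed polynomial depending only on $b$ and $d$. Second, $K(p_n,d)$ sits in a classical (large-level) limit: its defining root $\zeta_n^{*}=\exp(i\pi d/p_n)$ tends to $1$, and its signs $\epsilon_k^{*}=(-1)^{\lfloor kd/p_n\rfloor}$ are $+1$ except for exactly $d-1$ sign changes, near $k\approx jp_n/d$ for $1\le j\le d-1$. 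Thus $\sigma_g(\tfrac{q_n}{p_n})$ becomes a weighted Verlinde number of $K(p_n,d)$ in the classical limit, with the fixed insertion $\calP^{\infty}_{d-1}$.

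Carrying out the classical-limit analysis: a transfer-matrix (WKB) study of the recurrence for $\tilde P^{*}_{p_n-1}$---which is Chebyshev-like with $d-1$ localised sign changes, hence has $d$ distinct regimes---shows that the accumulating characters split into $d$ families, whose angular offsets are controlled by the continued fraction of $b/d$ and by $c$ modulo $d$, and that within the $j$-th family the characters near the accumulation point (on either side) are labelled by $m\ge1$ with $\Omega_n(\xi)\sim\omega_j\,n^{3}/(2m^{2}\pi^{2})$ for an explicit $\omega_j\in\Q$---the value along that family of $\calP^{\infty}_{d-1}$ times a factor $d^{3}$ and a block-length constant coming from $\Omega_n^{*}$. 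Just as in the Verlinde computation, each accumulation point of the $j$-th family then contributes
\[\sum_{m\ge1}\Big(\frac{\omega_j\,n^{3}}{2m^{2}\pi^{2}}\Big)^{g-1}=\Big(\frac{n^{3}}{2\pi^{2}}\Big)^{g-1}\zeta(2g-2)\,\omega_j^{g-1},\]
and summing over the two accumulation points and over $j$, and recognising $\sum_{j}\omega_j^{g-1}=\Tr_{W/\Q}(\Omega_W^{g-1})$ for a $d$-dimensional semi-simple Frobenius algebra $W$ assembled from the limiting signs, the weight $\calP^{\infty}_{d-1}$ and the boundary data of $a,c$ (concretely, a quotient of $\Q[X]$ by a degree-$d$ polynomial built from $\tilde P^{\infty}_{d-1}$, with Frobenius trace determined by the block-length constants), yields statement (2). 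The technical condition~\eqref{cond:H} is precisely the non-degeneracy ensuring all $\omega_j\neq0$, so that no leading terms cancel and $W$ is genuinely $d$-dimensional and semi-simple. Statement (1) is proved separately and more cheaply: in the relevant range $\lfloor kq_n/p_n\rfloor$ is, residue class by residue class of $k$, an affine-linear function of $n$, so a Verlinde-type residue formula for $\sigma_g(\tfrac{q_n}{p_n})$ has ingredients quasi-polynomial in $n$, and the Dedekind-sum reciprocities familiar from the classical Verlinde polynomial collapse it to an honest polynomial of degree $3g-3$.

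The main obstacle will be the joint control of the roots of $P^{*}_{p_n-1}$ and of the values of $\Omega_n^{*}$ at them: one must locate the relevant roots precisely enough to sum their contributions and establish the local estimate $\Omega_n(\xi)\sim\omega_j\,n^{3}/(2m^{2}\pi^{2})$ within each family, with the correct constant and with errors uniform enough to survive summation over $\sim n$ characters---this is where the interplay between the non-constant Frobenius structure and the transfer-matrix dynamics is most delicate, and where the exact shape of condition~\eqref{cond:H} must be pinned down. A secondary difficulty is to promote the $d$ limiting numbers $\omega_j$ to a genuine Frobenius algebra $W$, which requires following the idempotent structure, not merely the spectrum, through the limit.
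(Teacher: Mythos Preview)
Your overall strategy matches the paper's: pass via reciprocity to the pair $(p_n,d)$, observe that the $\epsilon$-sequence for $(p_n,d)$ consists of $d$ blocks of nearly constant sign, show that the characters split into $d$ families accumulating at the endpoints $X=\pm 2i$, and that within each family the contribution of the $m$-th accumulating root behaves like $\omega_j n^{3}/(2\pi^{2}m^{2})$, so that summing over $m$ produces $\zeta(2g-2)$ while the non-accumulating bulk is $O(n^{g})$. However, there is one concrete error and several substantive gaps. The error: the reciprocity gives $\Omega_n = \bigl(\calP^{(n)}_{d-1}\bigr)^{2}\,\Omega_n^{*}$, not a first power; equivalently (and this is how the paper writes it) $\sigma_g(q_n/p_n)=\Tr_{V_n/\Q}\bigl((\Omega_n\,\calP_{q_n-1}^{-2})^{g-1}\bigr)$, computed in the $(p_n,d)$-algebra with the insertion $\calP_{q_n-1}^{-2}$.

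The real work in the paper lies exactly where you invoke ``a transfer-matrix (WKB) study'' without details. The paper makes this precise by introducing explicit two-variable Laurent polynomials $Q_M,R_M,S_M\in\Z[U^{\pm1},V^{\pm1}]$ with $P_{p_n-1}(it+it^{-1})(t-t^{-1})^{d}=i^{n+1}Q_M(t,t^{n})$, etc., and the plane curve $\calC=\{Q_M=0\}$. Your ``$d$ families'' are then the $d$ local branches of $\calC$ at the singular points $(\pm1,\pm1)$, parametrised by the roots of an explicit degree-$d$ polynomial $H_1$; the constants $\omega_j$ are the values of $-H_1'H_2/H_3^{2}$ at these roots, and condition~\eqref{cond:H} is that $H_1$ has simple roots and is coprime to $H_3$ (so the branches are distinct and the insertion does not blow up), which is not the same as your ``all $\omega_j\neq0$''. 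The Frobenius algebra is then simply $W=\Q[x]/(H_1)$ with $\Omega_W=-H_1'H_2H_3^{-2}$; it is read off directly from the curve, not reconstructed from idempotents. Finally, the polynomiality (part~1) is proved by rewriting $\sigma_g$ as a residue at $t=0$ of a rational function in $(t,t^{n})$ and showing by a direct power-series argument that such a residue is polynomial in $n$; your sketch via ``Dedekind-sum reciprocities'' is a different route and is not developed enough to assess.
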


In the introduction, we do not describe the Frobenius algebra $(W,\Omega_W)$ or the technical condition \eqref{cond:H}; details can be found in Section~\ref{section:asympto}. We expect that \eqref{cond:H} holds for all $a,b,c,d$ as in the theorem, and we have verified it for all $0\le b<d<100$. For example, when $(a,b,c,d)=(3,2,4,3)$, one finds 
$$W=\Q[X]/(4X^3 + 16X^2 + 23X + 12)\text{ and }\Omega_W=28 X^2+80X+65$$
so that 
\begin{center}
\renewcommand{\arraystretch}{1.3}
\begin{tabular}{|c|cccccc|}
\hline
Genus $g$ & 1 & 2 & 3 & 4 & 5 & 6 \\ 
\hline
$\displaystyle\lim_{n\to\infty} 
\tfrac{\sigma_g(q_n/p_n)}{\dim \calV_{p_n}(S_g)}$
& $\tfrac{1}{3^0}$ & $\tfrac{1}{3^3}$ & $\tfrac{1}{3^6}$ 
& $\tfrac{1345}{3^9}$ & $\tfrac{1762}{3^{12}}$ & $\tfrac{2241}{3^{15}}$ \\
\hline
\end{tabular}
\end{center}
\smallskip

The proof of the above theorem is rather involved, so we outline its main steps for the reader’s convenience.
\begin{enumerate}
\item[Step 1:] One has $\sigma_g(q_n/p_n)=\Tr_{V_n^*/\Q}(\Omega_n^{g-1})$ where $V_n^*$ is the Frobenius algebra associated with $q_n/p_n$. Recall that $V_n^* \simeq V_n$ as algebras, hence the trace can be computed in $V_n$ instead. Then, by construction, the pair $(p_n,q_n)$ is reciprocal to the pair $(p_n,d)$. It will be much more convenient to work in $V_n$, as $q_n$ is replaced by the constant $d$. 
\item[Step 2:] We will show that in $V_n$, the polynomial $P_{p_n-1}$ (and certain other polynomials used in the computation of the signature) arises as a specializations of a $2$-variable polynomial $Q$; explicitly, one has $$P_{p_n-1}(it+it^{-1})(t-t^{-1})^d=i^{n+1} Q(t,t^n).$$ 
\item[Step 3:] We consider the plane algebraic curve $\calC=\{(u,v)\in (\C^*)^2 \, | \, Q(u,v)=0\}$. The signature can be expressed as the sum of a rational function to the power $g-1$ over a finite set $\calC[n]=\{(u,v)\in \calC \, | \, v=u^n\}$. We then study the distribution of these points as $n\to \infty$ and show that the main contribution to the signature concentrates around the singular points $\{\pm 1\}^2\subset \calC$.
\item[Step 4:] We analyze the branches of $\calC$ at the singular points $(\pm 1,\pm 1)$. Under the assumption~\eqref{cond:H}, exactly $d$ branches pass through each singular point, and the points of $\calC[n]$ accumulate along each branch in a regular way. The sum of these contributions, raised to the power $g-1$, is accounted for in the term $\Tr_{W/\Q}\Omega^{g-1}$. 
\end{enumerate}

To end this section, we advise the reader that there is a parallel article which also study the asymptotic behavior of the signature function $\frac{q}{p}\mapsto \sigma_g(\frac{q}{p})$, but in a different regime. Contrary to the case of this article, the authors consider sequences $\frac{q_n}{p_n}$ converging to a (very) irrational limit. The order of convergence is then $p_n^{2g-2}$ instead of $p_n^{3g-3}$ and the limit is expected to have modular properties, see \cite{MarMas}.

\subsection{Back to two-bridge knots}
To end this introduction, let us explain how our last theorem arises naturally from the knot-theoretical side. We do not attempt to make the argument precise. 

The sequence of two-bridge knots $K(p_n,q_n)$ can be drawn in its Conway normal form starting from the continued expansion of $p_n/q_n$. It is known (see e.g. \cite{Khi}) that there are integers $a_1,\ldots,a_k$ (independent of $n$) such that for any $n$
$$\frac{p_n}{q_n}=a_1+\cfrac{1}{a_2+\cfrac{1}{\cdots+\cfrac{1}{a_k+n-1}}}$$
We observe in Figure~\ref{fig.planardiagram} that this sequence of knots can be obtained by performing $\frac{1}{m}$-surgery on the trivial component $L_2$ of the link $L$, where $m=\frac{n-1}{2}$ for odd $n$. In the figure, boxes indicate twist regions, and $L_2$ is the trivial knot encircling the last twist region. This gives the key intuition for the last theorem. 

\begin{figure}[htpb!]
    \centering
    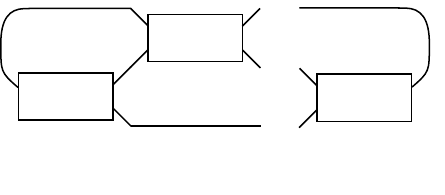
    \caption{The Conway normal form of $[a_1,a_2,\ldots,a_k]$ with a trivial component.}
    \label{fig.planardiagram}
\end{figure}

We consider the character variety $\calX_L$ of representations $\rho:\pi_1(S^3\setminus L)\to \SL_2(\C)$ which send a meridian $m_1$ of the non-trivial component $L_1$ to a non-trivial parabolic matrix. Denoting by $m_2$ and $l_2$ the meridian and longitude of $L_2$, respectively, the knot group of the surgered knot is obtained by adding the relation $m_2l_2^{-m}=1$. 
The character variety $\calX_L$ is expected to be 1-dimensional and to project birationally onto the character variety of the boundary $T_2$ of a tubular neighborhood of $L_2$. The latter variety is parametrized by a pair $(u,v)\in (\C^*)^2$ corresponding to a representation $\rho_{u,v}:\pi_1(T_2)\to \SL_2(\C)$ given by 
$$\rho_{u,v}(l_2)=\begin{pmatrix}u & 0 \\ 0 & u^{-1}\end{pmatrix},\quad \rho_{u,v}(m_2)=\begin{pmatrix}v & 0 \\ 0 & v^{-1}\end{pmatrix}.$$
The image of $\calX_L$ in $(\C^*)^2$ is then a plane curve $\calC$ given by the vanishing of a polynomial analogous to the $A$-polynomial of a knot. The parabolic character variety $\calX_n$ of $K(p_n,q_n)$ maps to the set of points $\calC[m] \subset \calC$ satisfying $v=u^m$.

To end this rough picture, recall that one can define the Reidemeister torsion $\tau_L$ of $S^3\setminus L$ with coefficients $\rho_2$ for $[\rho]\in \calX_L$. This is a meromorphic function on $\calX_L$ which behaves well under surgery, meaning that there is a formula expressing the collection of elements $\Omega_n$ in terms of the function $\tau_L$ evaluated at points in $\calC[m]$. This expression happens to have poles for $u,v=\pm 1$, and the resulting expression for the signature will localize on those poles. 

It is then natural to expect that the Frobenius algebra $W$, explicitly described in Section \ref{section:asympto}, can be described from the parabolic character variety of $L$, i.e., from representations which send both $m_1$ and $m_2$ to parabolic elements. We leave this for future investigation. 

\subsection*{Acknowledgments}
The first author thanks Makoto Sakuma for discussions around the orbifold quotients of two-bridge knot complements and Gregor Masbaum for its constant interest. 

\section{Reidemeister torsions of two-bridge knots}

\subsection{Topological setting}
\label{sec.top}

Let $0<q<p$ be two coprime odd integers and $K(p,q)$ be the two-bridge knot of parameters $(p,q)$. It is characterized by the property that its double branched cover is the lens space $L(p,q)$.
Precisely, the knot $K(p,q)$ is defined as follows. Set
\begin{equation*}
    S^3=\{(z_1,z_2)\in \C^2 \, : \,  |z_1|^2+|z_2|^2=2\} 
\end{equation*}
and let $R:S^3 \rightarrow S^3$ be the homeomorphism defined by 
\begin{equation*}
    R(z_1,z_2) = (\zeta z_1,\zeta^q z_2), \quad \zeta=\exp \left(\frac{2\pi i}{p} \right) \, .
\end{equation*}
The lens space $L(p,q)$ is the quotient of $S^3$ by the free action of $\Z/p\Z$ generated $R$.
The complex conjugation $C(z_1,z_2)=(\overline{z_1},\overline{z_2})$ satisfies $CRC^{-1}=R^{-1}$ and hence extends the $\Z / p \Z$-action to an action of the dihedral group $D_{p}=\Z/p\Z\rtimes\Z/2\Z$.
This $D_p$-action is no longer free, and the resulting quotient is homeomorphic to $S^3$, with ramification locus given by the knot $K(p,q)$.

A standard way to visualize the above construction is to decompose $S^3$ into two solid tori $T_1$ and $T_2$ defined by the inequalities $|z_1|\le |z_2|$ and $|z_2|\le |z_1|$, respectively. Note that their common boundary is a torus given by $|z_1|=|z_2|=1$. The quotient of $T_1$ by $\Z/p\Z$ is also a solid torus
\begin{equation*}
    T_1/ ( \Z / p \Z) = \big\{(r e^{i \theta}, z_2) \, \big |\, 0 \leq r \leq 1 , \ -\frac{\pi}{p} \leq \theta \leq \frac{\pi}{p} ,\  r^2 + |z_2|^2 =2 \big\}  /_\sim
\end{equation*}
where $\sim$ means that the discs bounded by $z_1 = \zeta^\frac{1}{2}$ and $z_1 = \zeta^{-\frac{1}{2}}$ are identified by $R$; see Figure~\ref{fig.lens}. If we further quotient $T_1/(\Z / p \Z)$ by the conjugation $C$, we obtain a ball with some ramification. Precisely, the quotient of $T_1$ by $D_p$ is a ball $B_1$ with ramification locus consisting of two arcs (blue arcs in Figure~\ref{fig.lens}):
\begin{equation}
\label{eqn.ramified}
    \{ (1,t) \, | \, -1 \leq t \leq 1\} \quad \text{and} \quad \{ (\zeta^{\frac{1}{2}},t \zeta^{\frac{q}{2}}) \, | \, -1 \leq t \leq 1\}\,.
\end{equation}
Similarly, the quotient $T_2/D_p$ is also a ball $B_2$ containing two ramified arcs. Gluing two balls $B_1$ and $B_2$ yields the 3-sphere, in which the ramified arcs combine to form the knot $K(p,q)$.

\begin{figure}[htpb!]
    \centering
    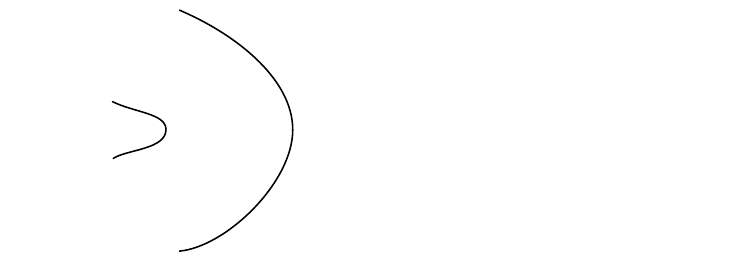
    \caption{The quotients of $T_1$ by $\Z/p \Z$ and $D_p$.}
    \label{fig.lens}
\end{figure}

If we project the ball $B_1$ onto a planar square so that the two ramified arcs in~\eqref{eqn.ramified} are mapped to its vertical edges, the two ramified arcs in $B_2$ are attached to the vertical edges with slope $q/p$, as illustrated in Figure~\ref{fig.schubert}. The resulting diagram is  called Schubert's normal form of $K(p,q)$. 
Letting $u$ and $v$ be the Wirtinger generators associated with the vertical edges, we obtain the following presentation of the knot group $G=\pi_1(S^3\setminus K(p,q))$:
\begin{equation}\label{presentation}
G=\langle u,v \, | \, wuw^{-1}v^{-1} \rangle, \quad w=u^{\epsilon_1}v^{\epsilon_2}\cdots u^{\epsilon_{p-2}} v^{\epsilon_{p-1}}
\end{equation}
where $\epsilon_k=(-1)^{\lfloor \frac{kq}{p}\rfloor}$ for any $k \in \Z$ and the relator in~\eqref{presentation} is obtained by traversing the diagram once.
\begin{figure}[htpb!]
    \centering
\begingroup%
  \makeatletter%
  \providecommand\color[2][]{%
    \errmessage{(Inkscape) Color is used for the text in Inkscape, but the package 'color.sty' is not loaded}%
    \renewcommand\color[2][]{}%
  }%
  \providecommand\transparent[1]{%
    \errmessage{(Inkscape) Transparency is used (non-zero) for the text in Inkscape, but the package 'transparent.sty' is not loaded}%
    \renewcommand\transparent[1]{}%
  }%
  \providecommand\rotatebox[2]{#2}%
  \newcommand*\fsize{\dimexpr\f@size pt\relax}%
  \newcommand*\lineheight[1]{\fontsize{\fsize}{#1\fsize}\selectfont}%
  \ifx\svgwidth\undefined%
    \setlength{\unitlength}{151.46571602bp}%
    \ifx\svgscale\undefined%
      \relax%
    \else%
      \setlength{\unitlength}{\unitlength * \real{\svgscale}}%
    \fi%
  \else%
    \setlength{\unitlength}{\svgwidth}%
  \fi%
  \global\let\svgwidth\undefined%
  \global\let\svgscale\undefined%
  \makeatother%
  \begin{picture}(1,0.9830618)%
    \lineheight{1}%
    \setlength\tabcolsep{0pt}%
    \put(0,0){\includegraphics[width=\unitlength,page=1]{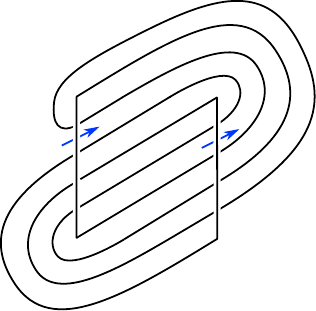}}%
    \put(0.13372162,0.49769947){\color[rgb]{0,0,0}\makebox(0,0)[lt]{\lineheight{1.25}\smash{\begin{tabular}[t]{l}$v$\end{tabular}}}}%
    \put(0.58280719,0.48664854){\color[rgb]{0,0,0}\makebox(0,0)[lt]{\lineheight{1.25}\smash{\begin{tabular}[t]{l}$u$\end{tabular}}}}%
  \end{picture}%
\endgroup%

    \caption{Schubert's normal form of $K(5,3)$.}
    \label{fig.schubert}
\end{figure}

By interchanging the roles of $B_1$ and $B_2$, we obtain another presentation of $G$, which coincides with the presentation \eqref{presentation} of $\pi_1(S^3\setminus K(p,\ell))$. Here $0<\ell<p$ is the unique odd integer satisfying $q\ell \equiv \pm 1$ modulo $p$.
This follows from the fact that the exchange map $E(z_1,z_2)=(z_2,z_1)$ induces a homeomorphism between $L(p,q)$ and $L(p,\ell)$ when $q\ell \equiv 1$, and the composition $E\circ F$, where $F(z_1,z_2)=(z_1,\overline{z_2})$, does the same when $q\ell \equiv -1$. In either case, $K(p,q)$ and $K(p,\ell)$ are isotopic, and thus their knot groups are isomorphic.

The knot $K(p,q)$ is doubly amphichiral, meaning that there are two commuting involutions $h_1$ and $h_2$ induced by the transformations $(z_1,z_2)\mapsto(z_1,-z_2)$ and $(z_1,z_2)\mapsto (-z_1,z_2)$, respectively. Each $h_i$ fixes a circle that intersects the knot in two points, and the two circles meet at two points. Choosing a base point of $G$ at one of the intersection points of the two circles, the involutions $h_1$ and $h_2$ induce automorphisms $h_1^\ast$ and $h_2^\ast$ of $G$, respectively, which commute with each other.
Explicitly, they are given by
\begin{equation}
\label{eqn.h12}
    \left\{
    \begin{array}{l}
         h_1^\ast(u) = u^{-1} \\[2pt]
         h_1^\ast(v) = v^{-1}
    \end{array}
    \right. , \quad 
    \left\{
    \begin{array}{l}
         h_2^\ast(u) = v^{-1} \\[2pt]
         h_2^\ast(v) = u^{-1}
    \end{array}
    \right. \, .
\end{equation}
It is not trivial from the presentation~\eqref{presentation} that $h^\ast_1$ and $h^\ast_2$ are well-defined, one of the key properties of the sequence $\epsilon_k$ is that it is palindromic in the sense that $\epsilon_k=\epsilon_{p-k}$ for all $0<k<p$. 

\subsection{Parabolic character varieties}
\label{sec.para}

In this section, we consider the character variety of irreducible representations 
\begin{equation*}
    \rho:G = \pi_1(S^3\setminus K(p,q))\to \SL_2(\C)
\end{equation*}
which are \emph{parabolic}, by which we mean that each meridian is mapped to a non-trivial matrix with trace 2. It is well-known that this is an affine algebraic set $\calX$ defined over $\Z$ whose complex points can up to conjugation be put in the form 
$$\rho(u)=\begin{pmatrix} 1 & 1 \\ 0 & 1\end{pmatrix},\quad \rho(v)=\begin{pmatrix} 1 & 0 \\ X & 1\end{pmatrix}.$$
Note that $X$ is nonzero, as $u$ and $v$ are conjugate in $G$. The coordinate ring $\Q[\calX]$ of $\calX$ is generated by $\Tr \rho(u)$, $\Tr \rho(v)$, and $\Tr \rho(uv)$; hence, it has the single generator $X$. More precisely, it is shown in \cite{Riley} that
$\Q[\calX]$ is isomorphic to $\Q[X]/ (R)$
where $R \in \Z[X]$ is called the Riley polynomial of the knot $K(p,q)$.

For the purpose of this article, it is more elegant to adopt a symmetric setting by instead taking
\begin{equation}\label{rho}
\rho(u)=\begin{pmatrix} 1 & X \\ 0 & 1\end{pmatrix},\quad \rho(v)=\begin{pmatrix} 1 & 0 \\ X & 1\end{pmatrix}.
\end{equation}
Note that $X$ in this setting is also non-zero, as meridians are sent to non-trivial matrices.
This symmetric setting can be understood conceptually in the following way. 
Recall from Section~\ref{sec.top} that $S^3 \setminus K(p,q)$ admits two involutions $h_1$ and $h_2$. The quotient of $S^3\setminus K(p,q)$ by these involutions is an orbifold $\calO(p,q)$, where its (orbifold) fundamental group fits into the exact sequence 
\begin{equation*}
0 \to G \to \pi_1(\calO(p,q))\to (\Z/2\Z)^2\to 0 \,.
\end{equation*}
The above sequence splits. In addition, it follows from Equation~\eqref{eqn.h12} that 
\begin{equation}\label{conjugation}
    \left\{
    \begin{array}{l}
         h_1 u h_1^{-1} = u^{-1} \\[2pt]
         h_1 v h_1^{-1} = v^{-1}
    \end{array}
    \right. , \quad 
    \left\{
    \begin{array}{l}
         h_2 u h_2^{-1} = v^{-1} \\[2pt]
         h_2 v h_2^{-1} = u^{-1}
    \end{array}
    \right.  .
\end{equation}
We then add an extra global $\Z/2\Z$ isotropy group to $\pi_1(\calO(p,q))$ by considering the fiber product 
\begin{equation*}
    \hat{G}=\pi_1(\calO(p,q)) \! \underset{(\Z/2\Z)^2}{\times} \! Q_8 \,.
\end{equation*}
Here $Q_8$ is the standard quaternion group  and $Q_8\to (\Z/2\Z)^2$ is the abelianization map.
The group $\hat{G}$ is a central extension of $\pi_1(\calO(p,q))$, and the involutions $h_1$ and $h_2$ lift to elements $\hat{h}_1$ and $\hat{h}_2$ in $Q_8$, respectively, whose squares are equal to the central element of order 2.

\begin{Definition}
The \emph{parabolic character variety} $\hat{\calX}$ of $\hat{G}$ is the algebraic set of conjugacy classes of representations $\rho:\hat{G}\to \SL_2(\C)$ such that the its restriction  to $G$ is irreducible and parabolic, and $\rho(\hat{h}_1^2)=\rho(\hat{h}_2^2)=-I$.
\end{Definition}

For any $\rho$ in $\hat{\calX}$, the last condition implies that its restriction to $Q_8$ is irreducible, because any reducible representation of $Q_8$ satisfies $\rho(\hat{h}_i^2)=I$. Hence, there is a unique conjugation so that $Q_8$ has its usual irreducible representation in $\SL_2(\C)$:
\begin{equation}
    \label{eqn.standard}
    \rho(\hat{h}_1)=\begin{pmatrix} i & 0 \\ 0 &-i\end{pmatrix},\quad \rho(\hat h_2)=\begin{pmatrix} 0 & 1 \\ -1 &0\end{pmatrix},\quad \rho(\hat h_1 \hat h_2)=\begin{pmatrix} 0 & i \\ i & 0 \end{pmatrix} . 
\end{equation}

\begin{Proposition}
The conjugations described in Equation \eqref{conjugation}  force $\rho(u)$ and $\rho(v)$ to have the symmetric form given in \eqref{rho} up to exchanging the role of $u$ and $v$.
\end{Proposition}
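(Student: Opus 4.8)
The plan is to convert the group relations \eqref{conjugation} into matrix identities for $\rho$ and then solve them directly. Write $U=\rho(u)$, $V=\rho(v)$, and keep fixed the standard matrices $H_1=\rho(\hat h_1)=\mathrm{diag}(i,-i)$ and $H_2=\rho(\hat h_2)=\left(\begin{smallmatrix}0&1\\-1&0\end{smallmatrix}\right)$ from \eqref{eqn.standard}. The first step is a bookkeeping remark: since the central extension $\hat G\to\pi_1(\calO(p,q))$ restricts to an embedding $G\hookrightarrow\hat G$, and conjugation of $G$ by a lift of $h_i$ is independent of the chosen lift, applying $\rho$ to \eqref{conjugation} yields, with no sign ambiguity, the four identities
$$H_1UH_1^{-1}=U^{-1},\quad H_1VH_1^{-1}=V^{-1},\quad H_2UH_2^{-1}=V^{-1},\quad H_2VH_2^{-1}=U^{-1}.$$

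First I would exploit the two $H_1$-relations. Any non-trivial trace-$2$ element of $\SL_2(\C)$ can be written $U=\left(\begin{smallmatrix}1+a&b\\c&1-a\end{smallmatrix}\right)$ with $a^2+bc=0$, and conjugation by $H_1=\mathrm{diag}(i,-i)$ simply negates the off-diagonal entries; equating $H_1UH_1^{-1}$ with $U^{-1}$ forces $a=0$, hence $bc=0$, with $(b,c)\ne(0,0)$ since the meridian is sent to a non-trivial matrix. The same argument gives $V=\left(\begin{smallmatrix}1&b'\\c'&1\end{smallmatrix}\right)$ with $b'c'=0$.

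Next I would bring in $H_2UH_2^{-1}=V^{-1}$. A short computation gives $H_2\left(\begin{smallmatrix}1&b\\c&1\end{smallmatrix}\right)H_2^{-1}=\left(\begin{smallmatrix}1&-c\\-b&1\end{smallmatrix}\right)$, so comparison with $V^{-1}=\left(\begin{smallmatrix}1&-b'\\-c'&1\end{smallmatrix}\right)$ yields $b'=c$ and $c'=b$, i.e.\ $V=\left(\begin{smallmatrix}1&c\\b&1\end{smallmatrix}\right)$. Since $bc=0$, either $c=0$---and then, setting $X=b\ne0$, we recover exactly \eqref{rho}---or $b=0$, which gives \eqref{rho} after exchanging the roles of $u$ and $v$. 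I would finish by checking that the remaining relation $H_2VH_2^{-1}=U^{-1}$ holds automatically, by the $b\leftrightarrow c$ symmetry of the computation just performed, so that no further constraint arises; and since $\rho(Q_8)$ is irreducible its centralizer in $\SL_2(\C)$ is $\{\pm I\}$, so there is no residual conjugation freedom available to remove the $u\leftrightarrow v$ ambiguity.

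I do not anticipate a genuine obstacle here: once the set-up is in place the argument is an elementary linear-algebra verification. The one point that needs a little care is the preliminary bookkeeping---verifying that centrality of the extension together with the splitting $G\hookrightarrow\hat G$ makes the four matrix identities hold exactly (with $U^{-1}$, not $-U^{-1}$), which is precisely what makes the symmetric normal form rigid rather than rigid only up to sign.
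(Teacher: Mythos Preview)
Your proposal is correct and follows essentially the same approach as the paper: use the $H_1$-conjugation to force the diagonal entries to be $1$ and one off-diagonal entry to vanish, then use the $H_2$-conjugation to pin down the relation between the off-diagonal entries of $U$ and $V$. The only cosmetic difference is that the paper splits the second step, invoking $h_2$ to see that one matrix is upper and the other lower triangular and then $h_1h_2$ to match the nonzero entries, whereas you read both facts off the single identity $H_2UH_2^{-1}=V^{-1}$; your bookkeeping remark on the absence of sign ambiguity is also correct and makes explicit a point the paper leaves implicit.
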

\begin{proof}
    From the fact $\hat{h}_1 u \hat{h}_1^{-1} = u^{-1}$, we deduce that $\rho(u)_{11}=\rho(u)_{22}$. Here $A_{ij}$ denotes the $(i,j)$-entry of $A$. Combined with the condition $\Tr \rho(u)=2$, this implies $\rho(u)_{11} = \rho(u)_{22} =1$. Since $\det \rho(u)=1$, it follows that either $\rho(u)_{12}$ or $\rho(u)_{21}$ is zero. The same reasoning applies to $v$, showing that $\rho(v)$ is upper or lower triangular with ones on the diagonal. 
    Since conjugating with $h_2$ changes $u$ to $v^{-1}$, $\rho(u)$ and $\rho(v)$ can not be upper or lower triangular simultaneously, one must be upper and the other lower. Finally, the fact that the conjugation by $\hat{h}_1 \hat{h}_2$ exchanges $u$ and $v$ forces the non-zero off-diagonal entries of $\rho(u)$ and $\rho(v)$ to be equal. This results in Equation~\eqref{rho} up to exchanging the role of $u$ and $v$.
\end{proof}
 
As with any character variety, the coordinate ring $\Q[\hat{\calX}]$ of $\hat{\calX}$ is generated by trace functions $\Tr \rho(\gamma)$ for $\gamma\in \hat{G}$. As we have explicit generators of $\hat{G}$, it is easy to check that $\Q[\hat{\calX}]$ is generated by $x$ and $i$; for instance, we have
$\Tr \rho(\hat h_2 u) = -x$ and $\Tr \rho(u \hat h_1\hat h_2) = ix$.
In Proposition~\ref{prop.iota} below, we will prove that $i$ can be expressed as a polynomial in $x$, implying that $\Q[\hat{\calX}]$ has the single generator $x$.

Given formal variables $X_1,X_2,\ldots,X_k$, we define the continuants
\begin{equation*}
K(X_1,\ldots,X_k)=
\det
\begin{pmatrix}
X_1 & -1 &  & & \\
1   & X_2&-1& & \\
    &  \ddots  & \ddots &\ddots &  \\  
    &   & 1&X_{k-1}&-1 \\
    &    &  &1&X_k
\end{pmatrix}  \in \Z[X_1,\ldots,X_k]\,.
\end{equation*}
The following are well-known properties of continuants. We omit the proofs, as they follow readily by routine induction arguments.
\begin{Lemma} 
\label{lem.K}
The continuants $K(X_1,\ldots,X_k)$ satisfy
\begin{align}
     \label{eqn.recursion} K(X_1,\ldots,X_k)&=X_k K(X_1,\ldots,X_{k-1})+K(X_1,\ldots,X_{k-2}) ,\\[2pt]
     \nonumber K(X_1,\ldots,X_k)&=K(X_k,\ldots,X_1) , \\
     \label{eqn.matK} \begin{pmatrix} X_1 & 1 \\ 1 & 0\end{pmatrix}\cdots \begin{pmatrix} X_k & 1 \\ 1 & 0\end{pmatrix} &= \begin{pmatrix} K(X_1,\ldots,X_k) & K(X_1,\ldots,X_{k-1}) \\ K(X_2,\ldots,X_{k}) & K(X_2,\ldots,X_{k-1})\end{pmatrix}
\end{align}
for any $k \geq 1$. In addition, 
\begin{align*}
&(-1)^k K(X_1,\ldots, X_{i-1})\, K(X_{i+k+2},\ldots, X_{i+j})  \\
    & = K(X_1,\ldots,X_{i+j})K(X_{i+1},\ldots,X_{i+k})-K(X_1,\ldots,X_{i+k})K(X_{i+1},\ldots,X_{i+j})
\end{align*}
holds for any $i\ge 1$, $k\ge0$, and  $j\ge k + 1$.
\end{Lemma}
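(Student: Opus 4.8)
The plan is to dispatch the four assertions one at a time by elementary linear algebra. Only the last identity requires any real bookkeeping, and even that reduces, once the matrix formula \eqref{eqn.matK} is available, to the multiplicativity of the determinant. Throughout I would fix once and for all the standard conventions for degenerate continuants: $K$ of the empty sequence equals $1$, and $K$ of a ``sequence of length $-1$'', i.e.\ of the form $K(X_a,\ldots,X_{a-2})$, equals $0$. These are precisely what make \eqref{eqn.recursion} and \eqref{eqn.matK} hold in the base cases $k=0,1$, and getting them straight at the outset is what keeps the final step from becoming error-prone.

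For \eqref{eqn.recursion} I would expand the defining determinant along its last row $(0,\ldots,0,1,X_k)$. The cofactor of $X_k$ is, after deleting the last row and column, exactly $K(X_1,\ldots,X_{k-1})$; the cofactor of the entry $1$ in position $(k,k-1)$ is, after deleting row $k$ and column $k-1$, a block-triangular determinant equal to $-K(X_1,\ldots,X_{k-2})$. Collecting signs gives $X_kK(X_1,\ldots,X_{k-1})+K(X_1,\ldots,X_{k-2})$. The identical computation applied to the \emph{first} row yields the mirror recursion $K(X_1,\ldots,X_k)=X_1K(X_2,\ldots,X_k)+K(X_3,\ldots,X_k)$. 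The palindrome identity $K(X_1,\ldots,X_k)=K(X_k,\ldots,X_1)$ then follows by a one-line induction on $k$ comparing these two recursions; more conceptually, it also follows from the fact that conjugating the transpose of the continuant matrix by the order-reversing permutation matrix returns the continuant matrix of the reversed sequence, while $\det$ is invariant under transposition and conjugation.

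For \eqref{eqn.matK} I would induct on $k$, the case $k=1$ being the definition together with the above conventions. For the inductive step one multiplies the $(k-1)$-fold product, assumed equal to $\begin{pmatrix} K(X_1,\ldots,X_{k-1}) & K(X_1,\ldots,X_{k-2}) \\ K(X_2,\ldots,X_{k-1}) & K(X_2,\ldots,X_{k-2})\end{pmatrix}$, on the right by $\begin{pmatrix} X_k & 1 \\ 1 & 0\end{pmatrix}$, and simplifies the two nontrivial entries using \eqref{eqn.recursion}. As a byproduct, taking determinants in \eqref{eqn.matK} gives $K(X_1,\ldots,X_k)K(X_2,\ldots,X_{k-1})-K(X_1,\ldots,X_{k-1})K(X_2,\ldots,X_k)=(-1)^k$, which is the special case $i=1$, $k=0$ of the last identity and a convenient consistency check.

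Finally, for the general identity I would package the three relevant windows of indices via \eqref{eqn.matK}. Writing $M(a,b)$ for the product $\begin{pmatrix}X_a & 1\\1&0\end{pmatrix}\cdots\begin{pmatrix}X_b & 1\\1&0\end{pmatrix}$ (and $M(a,a-1)$ for the identity matrix), set $A=M(1,i)$, $B=M(i+1,i+k)$, $C=M(i+k+1,i+j)$, so that $AB=M(1,i+k)$, $ABC=M(1,i+j)$, $BC=M(i+1,i+j)$, and $\det B=(-1)^k$. By \eqref{eqn.matK}, the $(1,1)$-entries of $AB$, $ABC$, $BC$, $B$ are $K(X_1,\ldots,X_{i+k})$, $K(X_1,\ldots,X_{i+j})$, $K(X_{i+1},\ldots,X_{i+j})$, $K(X_{i+1},\ldots,X_{i+k})$ respectively, the $(1,2)$-entry of $A$ is $K(X_1,\ldots,X_{i-1})$, and the $(2,1)$-entry of $C$ is $K(X_{i+k+2},\ldots,X_{i+j})$. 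A direct expansion of the relevant $2\times2$ products gives the algebraic identity
\[
(ABC)_{11}\,B_{11}-(AB)_{11}\,(BC)_{11}=A_{12}\,C_{21}\,\det B,
\]
and substituting $\det B=(-1)^k$ yields exactly the claimed relation. The hypotheses $i\ge1$, $k\ge0$, $j\ge k+1$ are precisely what is needed for the windows $[1,i]$, $[i+1,i+k]$, $[i+k+1,i+j]$ — and their degenerate forms — to make sense. I expect the only genuine obstacle to be clerical: verifying that when $k=0$ or $j=k+1$ the degenerate continuants that appear match the empty-sequence and identity-matrix conventions, which is exactly why fixing those conventions first is worth the effort.
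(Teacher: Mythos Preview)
Your proposal is correct. The paper itself omits the proof entirely, remarking only that the statements ``follow readily by routine induction arguments.'' Your treatment of \eqref{eqn.recursion}, the palindrome identity, and \eqref{eqn.matK} is exactly such a routine induction, so there is nothing to compare there.

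For the final identity you take a genuinely different route: rather than inducting on $j$ (or $k$) and repeatedly invoking \eqref{eqn.recursion}, you recognise the right-hand side as $(ABC)_{11}B_{11}-(AB)_{11}(BC)_{11}$ for the three block products $A=M(1,i)$, $B=M(i+1,i+k)$, $C=M(i+k+1,i+j)$, and reduce everything to the general $2\times 2$ identity $(ABC)_{11}B_{11}-(AB)_{11}(BC)_{11}=A_{12}C_{21}\det B$ together with $\det B=(-1)^k$. This is cleaner than a bare induction: it explains in one stroke why the sign is $(-1)^k$ and why the two ``boundary'' continuants $K(X_1,\ldots,X_{i-1})$ and $K(X_{i+k+2},\ldots,X_{i+j})$ appear, and it makes the degenerate cases $k=0$ and $j=k+1$ transparent via the conventions $M(a,a-1)=I$ and $K(\text{empty})=1$ that you set up at the start. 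The only cost is that one must verify the $2\times 2$ identity once, which is a two-line computation.
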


We set 
\begin{equation}
    \label{eqn.PQ}
    P_k=K(\epsilon_1 X,\ldots,\epsilon_{k}X), \quad Q_k=K(\epsilon_2X,\ldots,\epsilon_k X)\in \Z[X] 
\end{equation}
for $k\ge 1$ and extend both to all $k\in \Z$ by using the recursion~\eqref{eqn.recursion}; for instance, 
$$ P_0=1,\quad P_{-1}=0,\quad Q_0=0,\quad Q_1=1.$$
Equation~\eqref{eqn.matK} says that for any $k\ge 1$
\begin{equation}
\label{eqn.sym}
    \begin{pmatrix} P_k & P_{k-1}\\ Q_k & Q_{k-1}\end{pmatrix}=\begin{pmatrix} \epsilon_1 X & 1 \\ 1 & 0 \end{pmatrix} \cdots \begin{pmatrix} \epsilon_k X & 1 \\ 1 & 0 \end{pmatrix}\,.
\end{equation}
Grouping matrices in the right-hand side in pairs, we have 
\begin{equation*}
    \begin{pmatrix} \epsilon_k X & 1 \\ 1 & 0 \end{pmatrix} \begin{pmatrix} \epsilon_{k+1} X & 1 \\ 1 & 0 \end{pmatrix}=
    \begin{pmatrix} 1 & \epsilon_k X  \\ 0 & 1\end{pmatrix} \begin{pmatrix} 1 & 0 \\ \epsilon_{k+1} X & 1 \end{pmatrix}=\rho(u^{\epsilon_k}v^{\epsilon_{k+1}})
\end{equation*}
for any $k \geq 1$, implying that
\begin{align}
\label{eqn.PQmatrix}
    \begin{pmatrix} P_{2k} & P_{2k-1}\\ Q_{2k} & Q_{2k-1}\end{pmatrix}&=\rho(u^{\epsilon_1}v^{\epsilon_2}\cdots u^{\epsilon_{2k-1}}v^{\epsilon_{2k}})
\end{align}
as well as 
\begin{align}
\label{eqn.PQmatrix2}
    \begin{pmatrix} P_{2k} & P_{2k+1}\\ Q_{2k} & Q_{2k+1}\end{pmatrix}&=\rho(u^{\epsilon_1}v^{\epsilon_2}\cdots u^{\epsilon_{2k-1}} v^{\epsilon_{2k}}u^{\epsilon_{2k+1}}).
\end{align}
In particular, for $w=u^{\epsilon_1}v^{\epsilon_2}\cdots u^{\epsilon_{p-2}} v^{\epsilon_{p-1}}$, one has
\begin{equation}
\label{eqn.w}
    \rho(w)=\begin{pmatrix} P_{p-1} & P_{p-2}\\ Q_{p-1} & Q_{p-2}\end{pmatrix}.
\end{equation}
It is shown in \cite{Riley} that the Riley polynomial is equal to $\rho(w)_{11} = P_{p-1}$. That is, the assignment~
\eqref{rho} defines an $\SL_2(\C)$-representation of $G$ if and only if $P_{p-1}(X)=0$. 

\begin{Proposition}
\label{prop.iota}
$P_{p-2}$ satisfies $P_{p-2}^2 = -1$ in $\Q[X]/(P_{p-1})$.
\end{Proposition}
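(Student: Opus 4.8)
The plan is to exploit the matrix identity \eqref{eqn.w}, which identifies $\rho(w)$ with the matrix $\begin{pmatrix} P_{p-1} & P_{p-2}\\ Q_{p-1} & Q_{p-2}\end{pmatrix}$. Since $\det \rho(w)=1$ always holds (it is a product of matrices of determinant $1$), we get the universal identity $P_{p-1}Q_{p-2}-P_{p-2}Q_{p-1}=1$ in $\Z[X]$. Reducing modulo $P_{p-1}$, this becomes $-P_{p-2}Q_{p-1}\equiv 1$, so $P_{p-2}$ is invertible in $\Q[X]/(P_{p-1})$ with inverse $-Q_{p-1}$. Thus it remains to show $Q_{p-1}\equiv -P_{p-2}$, i.e. $P_{p-2}+Q_{p-1}\equiv 0 \pmod{P_{p-1}}$, which would give $P_{p-2}^2\equiv -P_{p-2}Q_{p-1}\equiv 1$... wait, that gives $+1$; so in fact what we want is $Q_{p-1}\equiv +P_{p-2}$, giving $P_{p-2}^2 \equiv -P_{p-2}Q_{p-1}\equiv -1$. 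So the heart of the matter is the congruence $Q_{p-1}\equiv P_{p-2} \pmod{P_{p-1}}$.

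To prove $Q_{p-1}\equiv P_{p-2}$, I would use the palindromic symmetry $\epsilon_k=\epsilon_{p-k}$ noted after \eqref{eqn.h12}, together with the reversal property $K(X_1,\ldots,X_k)=K(X_k,\ldots,X_1)$ from Lemma~\ref{lem.K}. Indeed $Q_{p-1}=K(\epsilon_2 X,\ldots,\epsilon_{p-1}X)=K(\epsilon_{p-1}X,\ldots,\epsilon_2 X)$, and by the palindrome $\epsilon_{p-j}=\epsilon_j$ this equals $K(\epsilon_1 X,\ldots,\epsilon_{p-2}X)=P_{p-2}$ — but one must be careful with the index shift. Concretely, the substitution $j\mapsto p-j$ sends the list $(\epsilon_2,\ldots,\epsilon_{p-1})$ read backwards, namely $(\epsilon_{p-1},\epsilon_{p-2},\ldots,\epsilon_2)$, to $(\epsilon_1,\epsilon_2,\ldots,\epsilon_{p-2})$; hence $Q_{p-1}=K(\epsilon_{p-1}X,\ldots,\epsilon_2 X)=K(\epsilon_1 X,\ldots,\epsilon_{p-2}X)=P_{p-2}$ as an identity in $\Z[X]$, with no reduction needed at all. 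Combined with the determinant relation $-P_{p-2}Q_{p-1}\equiv 1 \pmod{P_{p-1}}$ this yields $P_{p-2}^2\equiv -1$.

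The main obstacle I anticipate is bookkeeping with the indexing conventions for $P_k$, $Q_k$, and the $\epsilon_k$ — in particular making sure the reversal of the continuant for $Q_{p-1}$ lands exactly on the list defining $P_{p-2}$ rather than being off by one, and checking that the formal extension of $P_k,Q_k$ to all $k\in\Z$ via \eqref{eqn.recursion} is consistent with the $\epsilon_k$ being defined for all $k\in\Z$ via $\epsilon_k=(-1)^{\lfloor kq/p\rfloor}$. One should also double-check the sign in the determinant relation (it is $-1$, not $+1$, that multiplies $P_{p-2}Q_{p-1}$), since getting this sign wrong flips $-1$ to $+1$. If the clean identity $Q_{p-1}=P_{p-2}$ in $\Z[X]$ turns out to fail because of a parity subtlety in the palindrome (the relevant one being whether $p$ is odd, which it is), a fallback is to prove the weaker congruence $Q_{p-1}\equiv P_{p-2}\pmod{P_{p-1}}$ directly using the last identity in Lemma~\ref{lem.K} with a suitable choice of indices, together with $P_{p-1}\equiv 0$; but I expect the exact identity to hold and the argument to be short.
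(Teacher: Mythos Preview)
Your proposal is correct and follows essentially the same approach as the paper: both use the palindromic property $\epsilon_k=\epsilon_{p-k}$ to obtain the exact identity $Q_{p-1}=P_{p-2}$ in $\Z[X]$, and then combine it with $\det\rho(w)=1$ (using that $p-1$ is even) to conclude $P_{p-2}^2\equiv -1$. The paper phrases the symmetry step as ``$\rho(w)$ is symmetric'' via the matrix product \eqref{eqn.sym} (a product of symmetric matrices in palindromic order equals its own transpose), whereas you use the equivalent continuant reversal identity from Lemma~\ref{lem.K}; these are the same argument.
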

\begin{proof}
Combining Equation~\eqref{eqn.sym} for $k=p-1$ with the fact that $\epsilon_k = \epsilon_{p-k}$ for all $0<k<p$, we deduce that $\rho(w)$ is symmetric, i.e., $P_{p-2}=Q_{p-1}$. Together with the fact that $\rho(w)$ has determinant 1, we have $P_{p-2}^2=-1$ in $\Q[X]/(P_{p-1})$. 
\end{proof}

The above proposition shows that the complex number $i$ in~\eqref{eqn.standard} can be replaced by the polynomial $P_{p-2}\in \Q[X]/(P_{p-1})$. It follows that the coordinate ring $\Q[\hat{\calX}]$ of $\hat{\calX}$ has the single generator $X$.
\begin{Proposition}
The coordinate ring $\Q[\hat{\calX}]$ is isomorphic to $\Q[X]/(P_{p-1})$.
\end{Proposition}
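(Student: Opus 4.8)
The plan is to construct mutually inverse ring homomorphisms $\psi\colon \Q[X]/(P_{p-1})\to \Q[\hat\calX]$ and $\phi\colon \Q[\hat\calX]\to \Q[X]/(P_{p-1})$.

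\emph{Construction of $\psi$.} Sending $X\mapsto x$ gives a ring homomorphism $\Q[X]\to\Q[\hat\calX]$, and it is surjective because $x$ generates $\Q[\hat\calX]$ (the element $i$ occurring in \eqref{eqn.standard} being equal to $P_{p-2}(x)$ by Proposition~\ref{prop.iota}). For every $\rho\in\hat\calX$ the restriction $\rho|_G$ is a representation of $G$ of the form \eqref{rho}, so by Riley's criterion recalled after \eqref{eqn.w} its parameter $x(\rho)$ satisfies $P_{p-1}(x(\rho))=0$; since $\Q[\hat\calX]$ is reduced (being the quadratic extension, by a square root of $-1$, of the semisimple ring $\Q[\calX]$), this forces $P_{p-1}(x)=0$ in $\Q[\hat\calX]$. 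Hence $\psi$ is well defined and onto.

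\emph{Construction of $\phi$ via a universal representation.} Put $A:=\Q[X]/(P_{p-1})$ and $\iota:=P_{p-2}\in A$, so that $\iota^2=-1$ by Proposition~\ref{prop.iota}. Define $\hat\rho\colon \hat G\to\SL_2(A)$ by the formulas \eqref{rho} on $u,v$ and by the formulas \eqref{eqn.standard} on $\hat h_1,\hat h_2$, with the complex number $i$ there replaced by $\iota$; in particular $\hat\rho(\hat h_1^2)=\hat\rho(\hat h_2^2)=-I$. I would then verify that $\hat\rho$ respects all the defining relations of $\hat G$. For the knot relator $wuw^{-1}v^{-1}$, one computes---using \eqref{eqn.w} together with $P_{p-2}=Q_{p-1}$ and $\det\hat\rho(w)=1$---that $\hat\rho(w)\hat\rho(u)\hat\rho(w)^{-1}\hat\rho(v)^{-1}=I$ reduces to the two identities $P_{p-1}=0$ and $\iota^2=-1$ in $A$. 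The conjugation relations \eqref{conjugation} become the statements that conjugation by $\hat\rho(\hat h_1)$ inverts $\hat\rho(u)$ and $\hat\rho(v)$, while conjugation by $\hat\rho(\hat h_2)$ sends $\hat\rho(u)$ to $\hat\rho(v)^{-1}$ and $\hat\rho(v)$ to $\hat\rho(u)^{-1}$; and the quaternion relations become $\hat\rho(\hat h_1)^2=\hat\rho(\hat h_2)^2=(\hat\rho(\hat h_1)\hat\rho(\hat h_2))^2=-I$ with $-I$ central. Each of these is a one-line $2\times 2$ matrix computation. Since $P_{p-1}(0)=1$, the element $X$ is a unit in $A$, so $\hat\rho|_G$ is parabolic and absolutely irreducible at every point; together with $\hat\rho(\hat h_i^2)=-I$, this shows that $\hat\rho$ defines an $A$-point of $\hat\calX$, equivalently a ring homomorphism $\phi\colon \Q[\hat\calX]\to A$ with $\phi(\Tr\rho(\gamma))=\Tr\hat\rho(\gamma)$ for $\gamma\in\hat G$.

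\emph{Conclusion, and the main obstacle.} From $\Tr\rho(\hat h_2 u)=-x$ on $\hat\calX$ and the computation $\Tr\hat\rho(\hat h_2 u)=-X$ one gets $\phi(x)=X$, so $\phi\circ\psi=\mathrm{id}_A$; since $\psi$ is surjective, it is therefore an isomorphism. The step requiring the most care is writing down a \emph{complete} presentation of $\hat G$ from the fiber-product description $\hat G=\pi_1(\calO(p,q))\times_{(\Z/2\Z)^2}Q_8$---in particular identifying the relations involving the order-$2$ central element $\hat h_1^2=\hat h_2^2$ and checking that no further relations are hidden---so that the verification that $\hat\rho$ is well defined is genuinely exhaustive. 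This is also exactly the point at which Proposition~\ref{prop.iota} is indispensable: replacing the constant $i$ of \eqref{eqn.standard} by $P_{p-2}(x)$ is what turns $X$ into a single honest coordinate on $\hat\calX$ and disposes of the ``up to exchanging $u$ and $v$'' ambiguity left open by the symmetric-form proposition. (As an alternative to constructing $\phi$, one can instead compare dimensions: $\dim_\Q A=\deg P_{p-1}=p-1$ since $\deg P_k=k$, while $\dim_\Q\Q[\hat\calX]=2\dim_\Q\Q[\calX]=p-1$ because $\Q[\hat\calX]$ is the quadratic extension of $\Q[\calX]\cong\Q[X]/(R)$ with $\deg R=(p-1)/2$; a surjection of $\Q$-algebras of equal finite dimension is an isomorphism.)
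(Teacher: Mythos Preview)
Your proposal is correct. The alternative dimension-count argument you sketch at the end is essentially the paper's own proof: the paper notes that $\Q[\calX]\simeq\Q[X^2]/(P_{p-1})$ via Riley, then observes that $\Q[\hat\calX]$ is obtained from $\Q[\calX]$ by adjoining $X$ and $i$, and finally uses Proposition~\ref{prop.iota} to absorb $i$ as $P_{p-2}(X)$, landing in $\Q[X]/(P_{p-1})$.

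Your main route---building a universal representation $\hat\rho\colon\hat G\to\SL_2(A)$ to produce an explicit inverse $\phi$---is a genuinely different and more constructive argument. It has the virtue of making the isomorphism completely explicit and of not relying on the identification $\Q[\calX]\simeq\Q[X^2]/(P_{p-1})$ as a black box; it also pins down directly that $\phi(x)=X$ via $\Tr\rho(\hat h_2 u)$. The cost, as you correctly flag, is that one must exhibit a full presentation of $\hat G$ from the fiber-product description and verify every relation, including those involving the central element of $Q_8$; the paper sidesteps this by working at the level of coordinate rings and generators rather than representations. Your observation that $P_{p-1}(0)=1$ (so $X\in A^\times$ and irreducibility holds pointwise) is a nice touch that the paper leaves implicit.
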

\begin{proof}
    The coordinate ring $\Q[\calX]$ of $\calX$ is generated by $\Tr \rho(u)$, $\Tr \rho(v)$, and $\Tr \rho(uv)$ and is thus generated by $X^2$, subject to the relation given by the Riley polynomial. Namely, $\Q[\calX] \simeq \Q[X^2]/(P_{p-1})$. On the other hand, $\Q[\hat{\calX}]$ is obtained from $\Q[\calX]$ by adding $X$ and $i \in \C$, where the latter can be replaced by the polynomial $P_{p-2}$ due to Proposition~\ref{prop.iota}. Hence $\Q[\hat{\calX}]$ is isomorphic to $\Q[X]/ (P_{p-1})$.
\end{proof}



As Riley did in \cite{Riley} and the first author repeated in \cite{S2P}, by reducing modulo 2, one can prove that the polynomial $P_{p-1}$ has no multiple roots. In other words, $\Q[X]/(P_{p-1})$ is a semi-simple $\Q$-algebra.

\subsection{Computation of the torsion}

In this section, we compute the Reidemeister torsions of $K(p,q)$ associated with representations $\rho_n$ defined as below. The computation proceeds in $V=\Q[\hat{\calX}]=\Q[X]/(P_{p-1})$, and we denote by $x$, $\calP_k$ and $\calQ_k$ the image of $X$, $P_k$ and $Q_k$ in $V$, respectively.

Recall from Section~\ref{sec.para} that the polynomial $P_{p-1}$ is designed to give a parabolic representation $\rho : G \rightarrow \SL_2(V)$.
The natural action of $\SL_2(V)$ on the $n$-th symmetric power $\S^nV^2$ of $V^2$ defines a representation $s_n:\SL_2(V)\to \SL(\S^nV^2)$. The composition $\rho_n = s_n \circ \rho$ is a representation of $G$ into $\SL(\S^nV^2)$ and allows us to twist the usual cohomology of the knot complement  by the local system $\S^n V^2$. Under some cohomological conditions specified below, the Reidemeister torsion with respect to $\rho_n$ is well-defined (see e.g. \cite{Porti}), and we simply denote it by
\begin{equation*}
    \tau_n \in V / \{\pm1\} \,. 
\end{equation*}
Note that the torsion is defined up to sign. This sign ambiguity can be resolved by fixing a homological order of the knot complement, but we will not address it in this paper.

The fact that the torsion is invariant under simple homotopy allows us to compute the torsion $\tau_n$ using the 2-dimensional complex associated with the presentation~\eqref{presentation} of the knot group. Since this presentation has two generators $u$ and $v$ with a single relator $r=wuw^{-1}v^{-1}$, the corresponding chain complex is
\begin{equation}\label{cellcomplex}
    \S^nV^2 \overset{D^0}{\longrightarrow} \S^nV^2 \oplus \S^nV^2 \overset{D^1}{\longrightarrow} \S^nV^2 \, .
\end{equation}
The differentials $D^i$ of the above complex are given as follows.
Let $F_2$ be the free group with generators $u$ and $v$, and let $\frac{\partial}{\partial u},\frac{\partial}{\partial v}$ be the usual Fox derivatives, i.e, the unique endomorphisms of $\Z[F_2]$ such that
$$\frac{\partial u}{\partial u}=1,\quad \frac{\partial v}{\partial u}=0,\quad \frac{\partial u}{\partial v}=0,\quad \frac{\partial v}{\partial v}=1$$
and
$$\frac{\partial ab}{\partial u}=\frac{\partial a}{\partial u}+a\frac{\partial b}{\partial u},\quad
\frac{\partial ab}{\partial v}=\frac{\partial a}{\partial v}+a\frac{\partial b}{\partial v} \quad \text{for any } a,b\in F_2 \,.$$
Note that 
\begin{equation}
\label{eqn.funda}
     \frac{\partial a}{\partial u}(u-1)+\frac{\partial{a}}{\partial v}(v-1)=a-1 \quad \text{for any } a \in F_2 \,.
\end{equation}
Abusing notation, we denote by $\rho_n:\Z[F_2]\to \Z[G]\to \End(\S^n V^2)$ the map sending $a$ to $\rho_n(a)$. Then the differentials in the chain compelex~\eqref{cellcomplex} read
$$ D^0(\xi)=\Big(\rho_n(u-1)\xi,\ \rho_n(v-1)\xi \Big),\quad D^1(\xi,\eta)=\rho_n\! \left(\frac{\partial r}{\partial u}\right)\xi+\rho_n\! \left(\frac{\partial r}{\partial v}\right)\eta. $$
Note that $D^1\circ D^0=0$ follows from Equation~\eqref{eqn.funda} together with the fact that $\rho_n(r)=I$.

Let us further compute $D^1$.  
Recall that the word $w$ is given by $w=u^{\epsilon_1}\cdots v^{\epsilon_{p-1}}$. A striaghtforward computation shows that
\begin{align*}
    \frac{\partial w}{\partial u}&=  \sum_{k=1}^{\frac{p-1}{2}}
    \begin{cases}
        u^{\epsilon_1}v^{\epsilon_2}\cdots v^{\epsilon_{2k-2}} & \text{if }\epsilon_{2k-1}=1 \\
        -u^{\epsilon_1}v^{\epsilon_2}\cdots v^{\epsilon_{2k-2}}u^{\epsilon_{2k-1}} & \text{if }\epsilon_{2k-1}=-1 
    \end{cases}, \\[2pt]
    \frac{\partial w}{\partial v}&=\sum_{k=1}^{\frac{p-1}{2}}
    \begin{cases}
        u^{\epsilon_1}v^{\epsilon_2}\cdots u^{\epsilon_{2k-1}} &\text{if }\epsilon_{2k}=1 \\
        -u^{\epsilon_1}v^{\epsilon_2}\cdots u^{\epsilon_{2k-1}}v^{\epsilon_{2k}} & \text{if }\epsilon_{2k}=-1
    \end{cases}.
\end{align*}
Using Equations~\eqref{eqn.PQmatrix} and \eqref{eqn.PQmatrix2}, one has
\begin{align}
    \label{wu}
    \rho_n \! \left(\frac{\partial w}{\partial u}\right) &=\sum_{k=1}^{\frac{p-1}{2}}\epsilon_{2k-1} \, s_n\! \begin{pmatrix} \calP_{2k-2} & \calP_{2k-2-\epsilon_{2k-1}} \\ \calQ_{2k-2} & \calQ_{2k-2-\epsilon_{2k-1}}\end{pmatrix},\\
    \label{wv}
    \rho_n \! \left(\frac{\partial w}{\partial v}\right)&=\sum_{k=1}^{\frac{p-1}{2}}\epsilon_{2k} \, s_n \! \begin{pmatrix} \calP_{2k-1-\epsilon_{2k}} & \calP_{2k-1} \\ \calQ_{2k-1-\epsilon_{2k}} & \calQ_{2k-1}\end{pmatrix}.
\end{align}
Combining these equations with 
\begin{equation}\label{fox}
\frac{\partial r}{\partial u}= (1-v) \frac{\partial w}{\partial u}+w,\quad \frac{\partial r}{\partial v}=(1-v)\frac{\partial w}{\partial v}-1 \,,
\end{equation}
which directly follows from $r=wuw^{-1}v^{-1}$, we obtain an explicit formula for $D^1$.

\subsubsection{Tautological representation}

In this section, we compute the torsion $\tau_1$ associated with the tautological representation $\rho_1$. Note that $s_1$ is the identity map and thus $\rho_1=\rho$.

\begin{Lemma} 
    \label{lem.DD}
    Writing the differentials $D^i$ in matrix form,  we have
    $$D^0=\left(\begin{array}{cc}0 &x \\ 0& 0 \\ \hline 0 & 0 \\ x & 0 \end{array}\right),\quad D^1=\left(\begin{array}{cc|cc}0& D^{1}_{1} & D^1_{2} & 0\\ 0 & D^1_{3} & D^1_{4} & 0\end{array}\right)$$
    where  $D^1_{1}=\calP_{p-2}$, $D^1_{2}=-1$, 
    \begin{equation*}
        D^1_{3}=\calQ_{p-2}-x\sum_{k=1}^{\frac{p-1}{2}}\epsilon_{2k-1}\calP_{2k-2-\epsilon_{2k-1}} \,, \quad D^1_{4}=-x\sum_{k=1}^{\frac{p-1}{2}}\epsilon_{2k}\calP_{2k-1-\epsilon_{2k}} \, .
    \end{equation*}
\end{Lemma}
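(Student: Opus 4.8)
The plan is to carry out a direct matrix computation from the definitions of $D^0$ and $D^1$ recalled just above the statement, using that for $n=1$ one has $s_1=\Id$ and hence $\rho_1=\rho$ with the explicit parabolic form \eqref{rho}. For $D^0$ this is immediate: since $\rho(u)-I=\left(\begin{smallmatrix}0&x\\0&0\end{smallmatrix}\right)$ and $\rho(v)-I=\left(\begin{smallmatrix}0&0\\x&0\end{smallmatrix}\right)$, the formula $D^0(\xi)=(\rho(u-1)\xi,\rho(v-1)\xi)$ produces the claimed $4\times 2$ matrix. For $D^1$, the first step is to apply $\rho$ to \eqref{fox}, obtaining
$$\rho\!\left(\tfrac{\partial r}{\partial u}\right)=(I-\rho(v))\,\rho\!\left(\tfrac{\partial w}{\partial u}\right)+\rho(w),\qquad \rho\!\left(\tfrac{\partial r}{\partial v}\right)=(I-\rho(v))\,\rho\!\left(\tfrac{\partial w}{\partial v}\right)-I,$$
where $\rho(w)$ is given by \eqref{eqn.w} and simplifies in $V=\Q[X]/(P_{p-1})$: one has $\calP_{p-1}=0$ and, by Proposition~\ref{prop.iota}, $\calP_{p-2}=\calQ_{p-1}$, so $\rho(w)=\left(\begin{smallmatrix}0&\calP_{p-2}\\ \calP_{p-2}&\calQ_{p-2}\end{smallmatrix}\right)$.

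Next I would exploit that left multiplication by $I-\rho(v)=\left(\begin{smallmatrix}0&0\\-x&0\end{smallmatrix}\right)$ annihilates the first row of a matrix and moves $-x$ times its first row to the second row. Hence the first rows of $\rho(\partial r/\partial u)$ and $\rho(\partial r/\partial v)$ coincide with the first rows of $\rho(w)$ and $-I$ respectively, namely $(0,\calP_{p-2})$ and $(-1,0)$; this yields $D^1_1=\calP_{p-2}$, $D^1_2=-1$, and shows that the $(1,2)$-entries of both $2\times 2$ blocks of $D^1$ vanish. The two second-row entries in the displayed matrix then follow from a single line of matrix arithmetic using the explicit formulas \eqref{wu} and \eqref{wv} for $\rho(\partial w/\partial u)$ and $\rho(\partial w/\partial v)$: the $(2,2)$-entry of $\rho(\partial r/\partial u)$ equals $\calQ_{p-2}-x\sum_k\epsilon_{2k-1}\calP_{2k-2-\epsilon_{2k-1}}=D^1_3$, and the $(2,1)$-entry of $\rho(\partial r/\partial v)$ equals $-x\sum_k\epsilon_{2k}\calP_{2k-1-\epsilon_{2k}}=D^1_4$.

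The only point that is not purely formal is the vanishing of the two remaining entries: the $(2,1)$-entry of $\rho(\partial r/\partial u)$, which comes out as $\calP_{p-2}-x\sum_k\epsilon_{2k-1}\calP_{2k-2}$, and the $(2,2)$-entry of $\rho(\partial r/\partial v)$, which comes out as $-1-x\sum_k\epsilon_{2k}\calP_{2k-1}$. So what must be established is
$$x\sum_{k=1}^{(p-1)/2}\epsilon_{2k-1}\calP_{2k-2}=\calP_{p-2}\qquad\text{and}\qquad x\sum_{k=1}^{(p-1)/2}\epsilon_{2k}\calP_{2k-1}=-1\quad\text{in }V.$$
I would obtain both at once by applying the fundamental Fox identity \eqref{eqn.funda} to $a=w$ and then applying $\rho$, which gives $\rho(\partial w/\partial u)(\rho(u)-I)+\rho(\partial w/\partial v)(\rho(v)-I)=\rho(w)-I$. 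Right multiplication by $\rho(u)-I$ carries $x$ times the first column of $\rho(\partial w/\partial u)$ into the second column, and right multiplication by $\rho(v)-I$ carries $x$ times the second column of $\rho(\partial w/\partial v)$ into the first column; comparing entrywise with $\rho(w)-I=\left(\begin{smallmatrix}-1&\calP_{p-2}\\ \calP_{p-2}&\calQ_{p-2}-1\end{smallmatrix}\right)$ and reading off the top rows of \eqref{wu} and \eqref{wv} delivers precisely the two identities above. Alternatively, one can observe that $P_{p-1}(0)=1$, so $x$ is a unit in $V$, and that these two entries are exactly the first and fourth columns of $D^1$, which vanish because $D^1\circ D^0=0$ and $D^0$ sends the two basis vectors to $x$ times basis vectors of the target $V^4$.

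I do not anticipate any genuine difficulty; the argument is essentially bookkeeping. The two places demanding care are tracking the shifted subscripts $2k-2-\epsilon_{2k-1}$ and $2k-1-\epsilon_{2k}$ through \eqref{eqn.PQmatrix}--\eqref{eqn.PQmatrix2} so they land correctly in $D^1_3$ and $D^1_4$, and keeping the ordering of blocks and columns consistent with the chain complex \eqref{cellcomplex}.
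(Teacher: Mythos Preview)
Your proposal is correct and essentially follows the paper's own proof. The only presentational difference is in the order: the paper applies the Fox identity \eqref{eqn.funda} to $a=r$ at the outset, so that $D^1\circ D^0=0$ forces the first column of $\rho(\partial r/\partial u)$ and the second column of $\rho(\partial r/\partial v)$ to vanish \emph{a priori} (since $x$ is a unit), and then reads off the four remaining entries from \eqref{fox}, \eqref{eqn.w}, \eqref{wu}, \eqref{wv}; you instead compute all six entries directly and verify the two nontrivial zeros at the end, either via the Fox identity for $a=w$ or via the same $D^1\circ D^0=0$ argument --- your alternative route being precisely the paper's.
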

\begin{proof}
    It is clear from
    \begin{equation*}
        \rho_1(u-1) = \begin{pmatrix}
            0 & x \\ 0 & 0
        \end{pmatrix}, \quad \,
        \rho_1(v-1) = \begin{pmatrix}
            0 & 0 \\ x & 0
        \end{pmatrix}
    \end{equation*}
    that the matrix form of $D^0$ is given as in the claim.
    
    Equation~\eqref{eqn.funda} for $a=r$ says that  $\frac{\partial r}{\partial u}(u-1)+\frac{\partial r}{\partial v}(v-1)=0$. Thus we can write a priori
    $$\rho_1 \left(\frac{\partial r}{\partial u}\right)=\begin{pmatrix} 0 & a \\ 0 & b\end{pmatrix}, \quad \rho_1\left(\frac{\partial r}{\partial v}\right)=\begin{pmatrix} c & 0 \\ d & 0\end{pmatrix} \, .$$
    It then follows from Equations \eqref{fox} and \eqref{eqn.w} that $a=\calP_{p-2}=D^1_{1}$, $c=-1=D^1_{2}$, and from~\eqref{wu} and \eqref{wv} that $b=D^1_{3}$ and $d=D^1_{4}$.
\end{proof}

The above lemma implies that the chain complex~\eqref{cellcomplex} is acyclic if and only if $D^1_{1} D^1_{4}-D^1_{2} D^1_{3} \neq 0$. In the acyclic case, the torsion $\tau_1$ can be computed directly by using the Cayley formula (see, e.g. \cite[p.485]{GKZ}). Precisely, we have
\begin{equation}
\label{eqn.tau}
    \tau_1 = \frac{\Delta_1}{\Delta_0} \quad \text{where} \quad \Delta_0=x^2, \ \Delta_1=D^1_{1}D^1_{4} - D^1_{2} D^1_{3}.
\end{equation} 

\begin{Theorem}
\label{torfunda}
The torsion of the two bridge knot $K(p,q)$ in its tautological representation $\rho_1$ is well-defined and given by $$\tau_1= \frac{1}{x^2}\left(\calQ_{p-2}-x\sum_{k=1}^{\frac{p-1}{2}}\epsilon_{2k-1}(\calP_{2k-1}+\calP_{2k-3})\right).$$
\end{Theorem}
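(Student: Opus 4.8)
The starting point is the formula $\tau_1 = \Delta_1/\Delta_0 = \frac{1}{x^2}(D^1_1 D^1_4 - D^1_2 D^1_3)$ from Equation~\eqref{eqn.tau}, together with the explicit values of the entries $D^1_j$ computed in Lemma~\ref{lem.DD}. Substituting $D^1_1 = \calP_{p-2}$, $D^1_2 = -1$ gives
\[
\Delta_1 = \calP_{p-2}\,D^1_4 + D^1_3 = \calP_{p-2}D^1_4 + \calQ_{p-2} - x\sum_{k=1}^{\frac{p-1}{2}}\epsilon_{2k-1}\calP_{2k-2-\epsilon_{2k-1}}.
\]
So the claimed formula is equivalent to the identity
\[
\calP_{p-2}\,D^1_4 - x\sum_{k=1}^{\frac{p-1}{2}}\epsilon_{2k-1}\calP_{2k-2-\epsilon_{2k-1}} = -x\sum_{k=1}^{\frac{p-1}{2}}\epsilon_{2k-1}(\calP_{2k-1}+\calP_{2k-3})
\]
holding in $V = \Q[X]/(P_{p-1})$, where $D^1_4 = -x\sum_{k=1}^{\frac{p-1}{2}}\epsilon_{2k}\calP_{2k-1-\epsilon_{2k}}$. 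The plan is to prove this identity; the verification that the complex is acyclic (equivalently $\Delta_1 \neq 0$) is deferred — I will show $\tau_1$ agrees with the expression above whenever it is defined, and separately remark that acyclicity follows from the nonvanishing of this expression, which will later be identified with $\frac{2}{x^2\calP_{\ell-1}}$ in Theorem~A.

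\textbf{Key steps.} First, I would rewrite $\calP_{2k-2-\epsilon_{2k-1}}$ and $\calP_{2k-1-\epsilon_{2k}}$ uniformly: when $\epsilon_{2k-1}=1$ the first term is $\calP_{2k-3}$, when $\epsilon_{2k-1}=-1$ it is $\calP_{2k-1}$, so in both cases one has $\epsilon_{2k-1}\calP_{2k-2-\epsilon_{2k-1}} = \frac{1}{2}(\calP_{2k-1}+\calP_{2k-3}) + \frac{\epsilon_{2k-1}}{2}(\calP_{2k-1}-\calP_{2k-3}) \cdot(\mp 1)$ — more usefully, I would use the recursion $\calP_{2k-1} = \epsilon_{2k-1}x\calP_{2k-2}+\calP_{2k-3}$ (Equation~\eqref{eqn.recursion}) to write $\epsilon_{2k-1}(\calP_{2k-1}+\calP_{2k-3}) = \epsilon_{2k-1}(2\calP_{2k-3}) + x\calP_{2k-2}$, and similarly $\epsilon_{2k-1}\calP_{2k-2-\epsilon_{2k-1}}$ in terms of $\calP_{2k-3}$, $\calP_{2k-2}$. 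Then the target identity reduces to showing
\[
\calP_{p-2}\sum_{k=1}^{\frac{p-1}{2}}\epsilon_{2k}\calP_{2k-1-\epsilon_{2k}} = \sum_{k=1}^{\frac{p-1}{2}}\big(\text{lower-indexed }\calP\text{'s}\big),
\]
and the plan is to handle the factor $\calP_{p-2}$ using Proposition~\ref{prop.iota} ($\calP_{p-2}^2=-1$) together with the palindromy $\epsilon_k=\epsilon_{p-k}$ and the continuant reflection $K(X_1,\dots,X_k)=K(X_k,\dots,X_1)$ from Lemma~\ref{lem.K}. Concretely, palindromy gives $\calP_{p-1-j} = \calP_{p-2}\calP_j - \calQ_{p-2}\calQ_j$-type relations (obtained by splitting the product~\eqref{eqn.sym} as the first $j$ factors times the last $p-1-j$ factors and using~\eqref{eqn.matK}), which should convert the sum over $k$ involving $\calP_{2k-1-\epsilon_{2k}}$ multiplied by $\calP_{p-2}$ into a sum with reversed indices, and then one matches it term-by-term against $-\sum\epsilon_{2k-1}(\calP_{2k-1}+\calP_{2k-3})$ after re-indexing $k \mapsto \frac{p+1}{2}-k$.

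\textbf{Main obstacle.} The delicate point is the bookkeeping in the index-reversal argument: one must carefully track how the parity of indices and the $\epsilon$'s transform under $k\mapsto \frac{p+1}{2}-k$, noting that $p$ is odd so $\frac{p-1}{2}$ and $\frac{p+1}{2}$ swap parity roles, and that $\epsilon_{2k}$ at index $2k$ corresponds to $\epsilon_{p-2k}$, which is $\epsilon$ at an \emph{odd} index $p-2k$. Getting the signs and the off-by-one shifts exactly right — so that the "even" sum $D^1_4$ genuinely reorganizes into the "odd" sum in the statement — is where the real work lies; I expect this to require the full strength of the last identity in Lemma~\ref{lem.K} (the three-term continuant product relation) applied with a well-chosen splitting point, rather than just the recursion and reflection. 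Everything else is routine manipulation in the semisimple algebra $V$, where all the $\calP_k$, $\calQ_k$ are honest elements and $\calP_{p-1}=0$.
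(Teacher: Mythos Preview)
Your plan is correct and matches the paper's approach: start from $\tau_1=\Delta_1/\Delta_0$ with Lemma~\ref{lem.DD}, transform $\calP_{p-2}D^1_4$ via a product identity coming from the last equation in Lemma~\ref{lem.K}, then re-index using the palindromy $\epsilon_k=\epsilon_{p-k}$, and defer acyclicity to the later identification $\tau_1=2/(x^2\calP_{\ell-1})$. The one simplification you are missing is that the continuant identity (together with $\calP_{p-1}=0$ and palindromy) gives the clean closed form $\calP_{p-2}\calP_k=(-1)^k\calP_{p-2-k}$ in $V$; using this directly on $D^1_4$ avoids any $\calQ$-terms or matrix-splitting relations, and after the substitution $2k\mapsto p-1-2k$ the transformed sum combines with the $D^1_3$-sum termwise as $\calP_{2k-2+\epsilon_{2k-1}}+\calP_{2k-2-\epsilon_{2k-1}}=\calP_{2k-1}+\calP_{2k-3}$, which eliminates the sign bookkeeping you flagged as the main obstacle.
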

\begin{proof}
The last equality of Lemma~\ref{lem.K} implies that
$\calP_{p-2}\calP_k=(-1)^k\calP_{p-2-k}$ for any $k$.
Hence we can rewrite
$$\sum_{k=1}^{\frac{p-1}{2}} \epsilon_{2k} \calP_{p-2} \calP_{2k-1-\epsilon_{2k}}=\sum_{k=1}^{\frac{p-1}{2}}\epsilon_{2k}\calP_{p-1-2k+\epsilon_{2k}}=\sum_{k=0}^{\frac{p-3}{2}}\epsilon_{2k+1}\calP_{2k+\epsilon_{2k+1}} \, .$$
In the last equality, we have substituted $2k$ with $p-2k-1$ and used the symmetry $\epsilon_{2k+1}=\epsilon_{p-2k-1}$. Then two sums that appear in $$\Delta_1= -x\sum_{k=1}^{\frac{p-1}{2}}\epsilon_{2k} \calP_{p-2} \calP_{2k-1-\epsilon_{2k}}+\calQ_{p-2}-x\sum_{k=1}^{\frac{p-1}{2}}\epsilon_{2k-1}\calP_{2k-2-\epsilon_{2k-1}} $$ add to $\sum_k\epsilon_{2k-1}(\calP_{2k-1}+\calP_{2k-3})$. Combining this with Equation~\eqref{eqn.tau}, we obtain the desired formula of $\tau_1$.

It remains to show that $\Delta_1$ is invertible to prove that the chain complex~\eqref{cellcomplex} is acyclic and the torsion $\tau_1$ is well-defined. This follows from Theorem~\ref{thm.simp} below, saying that $\Delta_1=x^2\tau_1$ is invertible with the explicit inverse $\calP_{\ell-1}/2$. 
\end{proof}

\subsubsection{Simplication of $\tau_1$}

In this section, we show that the formula of $\tau_1$ in Theorem~\ref{torfunda} admits a remarkable simplification.
\begin{Theorem}
\label{thm.simp}
One has 
\begin{equation*}
    \tau_1=\frac{2}{x^2 \calP_{\ell-1}}
\end{equation*}
where $0<\ell<p$ is the unique odd integer satisfying $q\ell \equiv \pm 1$ modulo $p$.
\end{Theorem}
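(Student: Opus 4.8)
The plan is to prove the identity $\Delta_1 = x^2\tau_1 = 2/\calP_{\ell-1}$ in $V = \Q[X]/(P_{p-1})$ by exhibiting an explicit algebraic identity in $\Z[X]$ that becomes the desired statement after reducing modulo $P_{p-1}$. Concretely, I would aim to show that
\begin{equation*}
\calP_{\ell-1}\left(\calQ_{p-2}-x\sum_{k=1}^{\frac{p-1}{2}}\epsilon_{2k-1}(\calP_{2k-1}+\calP_{2k-3})\right) = 2
\end{equation*}
in $V$. The key structural input is the identification of $K(p,q)$ with $K(p,\ell)$ from Section~\ref{sec.top}: the exchange of the two solid tori $T_1, T_2$ gives a second Schubert presentation whose associated continuant polynomials $P^*_k, Q^*_k$ (built from $\epsilon^*_k = (-1)^{\lfloor k\ell/p\rfloor}$) generate the \emph{same} ring $V$, via an isomorphism $V^* \to V$ sending $x^*$ to $x\calP_{\ell-1}$ (as stated after Theorem~B in the introduction). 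So the first step is to make this isomorphism precise: track how the Wirtinger generators transform under the exchange map $E$ (or $E\circ F$), express $\rho^*(u^*), \rho^*(v^*)$ in terms of $\rho(u),\rho(v)$, and deduce the substitution rule $x^* \mapsto x\calP_{\ell-1}$ together with the companion rule $x \mapsto x^*\calP^*_{q-1}$.

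Next I would invoke the torsion formula of Theorem~\ref{torfunda} applied to the presentation of $K(p,\ell)$: this gives $\tau_1 = \frac{1}{(x^*)^2}\Delta_1^*$ where $\Delta_1^*$ is the same expression with all starred data. Since $\tau_1$ is a topological invariant of the knot $K(p,q) = K(p,\ell)$, the two formulas must agree, so $\frac{1}{x^2}\Delta_1 = \frac{1}{(x^*)^2}\Delta_1^*$ in $V$. Now the trick is to choose $q$ (equivalently the representative in the isotopy class) so that one of the two presentations is trivial in the relevant sense; the natural move is to take the reciprocal pair where $\ell = 1$-side is degenerate, but since $q\neq 1$ is not assumed here I would instead argue symmetrically: substituting the rule $x^* = x\calP_{\ell-1}$ into $\Delta_1^* = (x^*)^2\tau_1$ and comparing with $\Delta_1 = x^2\tau_1$ forces $\Delta_1^* = \calP_{\ell-1}^2 \Delta_1$, while a direct evaluation of $\Delta_1^*$ (or of $\tau_1$ via the $K(p,1)$-type base case handled inductively on the continued-fraction length) pins down $\Delta_1\calP_{\ell-1} = 2$.

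Alternatively — and this is probably the cleaner route — I would prove the identity by a direct induction using the continuant recursions of Lemma~\ref{lem.K}. Write $S = \calQ_{p-2} - x\sum_k \epsilon_{2k-1}(\calP_{2k-1}+\calP_{2k-3})$ and seek a closed form for $S$ purely in terms of continuants; the last (telescoping-type) identity of Lemma~\ref{lem.K} should collapse the sum, after which one recognizes $S$ as $2/\calP_{\ell-1}$ by relating the continuant $K(\epsilon_1 X,\ldots,\epsilon_{p-1}X)$ associated to the $q$-expansion to the one associated to the $\ell$-expansion — this is a classical reciprocity for continued fractions of $p/q$ versus $p/\ell$. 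The main obstacle I anticipate is the bookkeeping in this reciprocity step: one must carefully match the floor-function sequences $\lfloor kq/p\rfloor$ and $\lfloor k\ell/p\rfloor$ and track the sign conventions (recall everything is only defined up to sign), since the factor of $2$ and the precise index $\ell-1$ (rather than, say, $\ell$ or $p-\ell$) are delicate. Establishing invertibility of $\calP_{\ell-1}$ in $V$ is then immediate, since its product with $S$ is the unit $2$, which also retroactively justifies the acyclicity claim and well-definedness of $\tau_1$ asserted in Theorem~\ref{torfunda}.
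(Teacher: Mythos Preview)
Your proposal has a genuine gap in both suggested routes.

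For the first route (topological invariance): the relation $\tau_1(K(p,q)) = \tau_1(K(p,\ell))$ together with the isomorphism $x^* \mapsto x\calP_{\ell-1}$ only yields the \emph{consistency} constraint $\Delta_1^* = \calP_{\ell-1}^2\Delta_1$. This is symmetric in the two presentations and determines neither $\Delta_1$ nor $\Delta_1^*$; you still need an independent computation of one of them, which is precisely the content of the theorem. The suggestion to anchor at a $K(p,1)$-type base case does not help: $\ell = 1$ forces $q \equiv \pm 1 \pmod p$, i.e.\ the torus-knot case, and there is no inductive mechanism on ``continued-fraction length'' that reduces a general $(p,q)$ to this case while staying inside the fixed ring $V$.

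For the second route (direct computation): the paper's proof does proceed by direct manipulation in $V$, but it is not a telescope via the continuant identity of Lemma~\ref{lem.K}, nor does it compare the $q$-continuants with the $\ell$-continuants. The actual ingredients are: (i) a product formula $(\calP_{\ell'} - \calP_{\ell'-2})\calP_k = \pm(\calP_{\ell'+k} - \calP_{\ell'-k-2})$ proved by induction on $k$ (Lemma~\ref{sumoftwo}), where $\ell'$ is the odd integer in $(0,2p)$ with $q\ell' \equiv -1 \pmod{2p}$; (ii) the identity $\calP_{\ell'-1}\calQ_{p-2} = -\iota(\calP_{\ell'} + \calP_{\ell'-2})$ (Lemma~\ref{omega}), which exploits that the shift $k \mapsto k + \ell'$ preserves the sign sequence $\epsilon_k$; and (iii) after applying (i) and (ii) to $(\calP_{\ell'}-\calP_{\ell'-2})S$, a term-by-term case analysis over $k \in \Z/2p\Z$ locating exactly three surviving contributions, namely $2$, $\iota\calP_{\ell'}$, and $\iota\calP_{\ell'-2}$. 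None of this is ``classical continued-fraction reciprocity for $p/q$ versus $p/\ell$''; the argument stays entirely inside the single ring $V$ and never invokes the $\ell$-expansion's continuants. Your sketch does not identify the shift-by-$\ell'$ structure (the fact that $\epsilon_{\ell'+k}=\epsilon_k$), which is the mechanism that makes the computation close.
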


Prior to the proof, we record some lemmas. 
\begin{Lemma}
\label{lem.R}
Let $R_k$ be a sequence of polynomials satisfying the recursion
\begin{equation}\label{rec}
R_k=\epsilon_k x R_{k-1}+R_{k-2} .
\end{equation}
Then $R_k = R_0P_k + R_{-1}Q_k$ for any $k$.
\end{Lemma}
\begin{proof}
Recall that both $P_k$ and $Q_k$ satisfy the same recursion~\eqref{rec} with $P_{-1}=0$, $P_0=1$, and $Q_{-1}=1$, $Q_0=0$. It follows that $R_k$ and $R_0P_k + R_{-1}Q_k$ satisfy the same recursion with the same values for $k=0$ and $-1$. Therefore, they coincide for any $k$.
\end{proof}

We henceforth set $\iota = \calP_{p-2}$; by Proposition~\ref{prop.iota}, we have $\iota^{2} = -1$.

\begin{Lemma}
\label{lem.e1}
    One has $\calP_{k+p}=(-1)^{k}\iota \calP_k$ and $\calP_{k+2p} = \calP_k$ for any $k$.
\end{Lemma}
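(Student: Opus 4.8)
The plan is to exploit the defining recursion $\calP_k = \epsilon_k x \calP_{k-1} + \calP_{k-2}$ together with the periodicity $\epsilon_{k+p} = -\epsilon_k$, which follows from $\lfloor (k+p)q/p \rfloor = \lfloor kq/p \rfloor + q$ and the fact that $q$ is odd. First I would fix $k_0$ and consider the shifted sequence $R_j := \calP_{k_0 + p + j}$ as $j$ ranges over $\Z$. Because $\epsilon_{k_0+p+j} = -\epsilon_{k_0+j}$, this sequence does \emph{not} satisfy the same recursion as $\calP_{k_0+j}$, but the sign-twisted sequence $\tilde R_j := (-1)^j R_j = (-1)^j \calP_{k_0+p+j}$ does: indeed $(-1)^j\calP_{k_0+p+j} = (-1)^j(\epsilon_{k_0+p+j} x \calP_{k_0+p+j-1} + \calP_{k_0+p+j-2}) = \epsilon_{k_0+j} x\,(-1)^{j-1}\calP_{k_0+p+j-1} + (-1)^{j-2}\calP_{k_0+p+j-2}$, so $\tilde R_j = \epsilon_{k_0+j} x \tilde R_{j-1} + \tilde R_{j-2}$, which is exactly the recursion $\calP_{k_0+j}$ obeys (after re-indexing $\epsilon$).

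Applying Lemma~\ref{lem.R} to the two sequences $j \mapsto \calP_{k_0+j}$ and $j \mapsto \tilde R_j$, both indexed so that their recursion coefficients agree, I get $\calP_{k_0+j} = \calP_{k_0} \widehat P_j + \calP_{k_0-1} \widehat Q_j$ and $\tilde R_j = \tilde R_0 \widehat P_j + \tilde R_{-1} \widehat Q_j$ for the same auxiliary fundamental solutions $\widehat P_j, \widehat Q_j$. It therefore suffices to check the identity $\calP_{k+p} = (-1)^k \iota \calP_k$ at two consecutive values of $k$, say $k=0$ and $k=-1$, and then the general statement follows by linearity of the recursion. For $k=0$ this is $\calP_p = \iota \calP_0 = \iota$ (recall $\calP_0 = 1$), and for $k=-1$ it is $\calP_{p-1} = -\iota \calP_{-1} = 0$, which holds since $\calP_{-1} = 0$ and, more to the point, $\calP_{p-1} = 0$ in $V = \Q[X]/(P_{p-1})$ by definition. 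So the real content is the single identity $\calP_p = \iota = \calP_{p-2}$ in $V$.

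To establish $\calP_p = \calP_{p-2}$ in $V$ I would use Equation~\eqref{eqn.sym}: the matrix $\begin{pmatrix} \calP_{p} & \calP_{p-1} \\ \calQ_{p} & \calQ_{p-1}\end{pmatrix}$ equals $\begin{pmatrix}\calP_{p-1} & \calP_{p-2}\\ \calQ_{p-1}&\calQ_{p-2}\end{pmatrix}\begin{pmatrix}\epsilon_p x & 1\\1&0\end{pmatrix}$, and since $\calP_{p-1}=0$ in $V$ the top-left entry becomes $\calP_p = \epsilon_p x \cdot 0 + \calP_{p-2} = \calP_{p-2} = \iota$, exactly as needed; the other entries give the companion relations $\calQ_p = \epsilon_p x \calP_{p-2} + \calQ_{p-2}$ and $\calQ_{p-1} = \calP_{p-2}$ (the latter being Proposition~\ref{prop.iota}), which are consistent with $\det = 1$. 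Once $\calP_p = \iota$ and $\calP_{p-1}=0$ are in hand, the first formula $\calP_{k+p} = (-1)^k\iota\calP_k$ holds for all $k$ by the linear-recursion argument above. The second formula is then immediate: $\calP_{k+2p} = (-1)^{k+p}\iota\,\calP_{k+p} = (-1)^{k+p}\iota \cdot (-1)^k \iota\,\calP_k = (-1)^{2k+p}\iota^2 \calP_k = -(-1)^p\,(-1)\,\calP_k = \calP_k$, using $\iota^2=-1$ and $p$ odd. The only mildly delicate point — and the one I would be most careful about — is bookkeeping the interaction between the sign $(-1)^k$ and the shift of the $\epsilon$-indices by $p$; everything else is a routine application of Lemma~\ref{lem.R} and the relation $\calP_{p-1}=0$.
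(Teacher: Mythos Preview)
Your proof is correct and follows essentially the same route as the paper: both use $\epsilon_{k+p}=-\epsilon_k$ to show that a sign-twisted shift of $\calP_\bullet$ satisfies the original recursion, invoke Lemma~\ref{lem.R} to reduce to two initial values, and then use $\calP_{p-1}=0$ together with $\calP_p=\calP_{p-2}=\iota$ to conclude. The paper's write-up is marginally slicker only in that it applies Lemma~\ref{lem.R} directly with no shift, obtaining the polynomial identity $(-1)^{k+p}P_{k+p}=-P_pP_k+P_{p-1}Q_k$ in $\Z[X]$ before reducing mod $P_{p-1}$, which avoids your auxiliary $\widehat P_j,\widehat Q_j$.
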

\begin{proof}
Since $\epsilon_{k+p} = -\epsilon_k$, it follows that the sequence $P_{k+p}(-x) = (-1)^{k+p} P_{k+p}(x)$ satisfies the recursion~\eqref{rec}. By Lemma~\ref{lem.R}, we have
$$(-1)^{k+p}P_{k+p}= -P_p P_{k}+P_{p-1}Q_k.$$
As $P_p=\epsilon_p x P_{p-1}+P_{p-2}$, we obtain the relation $\calP_{k+p}=(-1)^{k}\iota \calP_k$ in $V$. Applying it twice, one has $\calP_{k+2p}=\calP_k$.
\end{proof}

\begin{Lemma}
\label{lem.e2}
One has $\calP_{-2-k}=\calP_k$ for any $k$.
\end{Lemma}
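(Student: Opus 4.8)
The plan is to mimic the proof of Lemma~\ref{lem.e1}: identify a sequence satisfying the recursion \eqref{rec} that, after reindexing, equals $P_{-2-k}$, and then apply Lemma~\ref{lem.R} to express it in terms of $P_k$ and $Q_k$. First I would fix $k$ and consider the sequence $R_j := P_{-2-j}$ as a function of $j$; from the two-sided recursion $P_m = \epsilon_m x P_{m-1} + P_{m-2}$ (valid for all $m\in\Z$ by construction), one checks that $R_j$ satisfies $R_j = \epsilon_{-1-j} x R_{j-1} + R_{j-2}$. The obstacle here is that the coefficient that appears is $\epsilon_{-1-j}$ rather than $\epsilon_j$, so this is not literally the recursion \eqref{rec}; however, since $\epsilon$ is $2p$-periodic and palindromic in the sense $\epsilon_m = \epsilon_{p-m}$ (equivalently $\epsilon_{-m} = \epsilon_m$ because $\epsilon_{-m} = \epsilon_{p - (p - (-m))}$... more directly: $\epsilon_{-m}=(-1)^{\lfloor -mq/p\rfloor}$ and using $\lfloor -x\rfloor = -\lceil x\rceil$ together with $p\nmid mq$ one gets $\epsilon_{-m}=\epsilon_m$), we have $\epsilon_{-1-j} = \epsilon_{1+j} = \epsilon_{j+1}$. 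Hmm — that gives a shifted index too. The cleanest route is to instead reindex: set $R_j := \calP_{-2-k}$ with the roles arranged so the recursion matches. Concretely, I would verify directly that the sequence $j \mapsto \calP_{j}$ and the sequence $j \mapsto \calP_{-2-j}$ both satisfy the recursion \eqref{rec} \emph{with the same $\epsilon$-coefficients} once one uses $\epsilon_{-2-j+1} = \epsilon_{-1-j} = \epsilon_{j+1}$...

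Let me restart the bookkeeping more carefully in the write-up. The key algebraic input I will use is the palindromy $\epsilon_m = \epsilon_{-m}$ for all $m$ (which I will record as a short computation with floor functions, using $p \nmid mq$), together with $2p$-periodicity $\epsilon_{m+2p}=\epsilon_m$ (immediate since $\lfloor (m+2p)q/p\rfloor = \lfloor mq/p\rfloor + 2q$ and $q$ is an integer — in fact $p$-periodicity up to sign). Granting $\epsilon_m=\epsilon_{-m}$, define $S_j := \calP_{-2-j}$. Then
$$S_j = \calP_{-2-j} = \epsilon_{-j}\, x\, \calP_{-1-j} + \calP_{-j} = \epsilon_{-j}\, x\, \calP_{-2-(j-1)} + \calP_{-2-(j-2)} = \epsilon_j\, x\, S_{j-1} + S_{j-2},$$
using $\epsilon_{-2-j} $ for the step from $\calP_{-2-j}$; wait, the recursion \eqref{rec} applied at index $m=-2-j$ reads $\calP_{-2-j} = \epsilon_{-2-j}\, x\, \calP_{-3-j} + \calP_{-4-j}$ — that goes the wrong way. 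Instead apply \eqref{rec} at index $m = -j$: $\calP_{-j} = \epsilon_{-j} x \calP_{-j-1} + \calP_{-j-2}$, i.e. $\calP_{-j-2} = \calP_{-j} - \epsilon_{-j} x \calP_{-j-1}$. That is again a two-term relation of the right shape but with a sign. So the honest statement is that $S_j$ satisfies $S_j = -\epsilon_{-j} x S_{j-1} + S_{j-2}$? No: $S_j = \calP_{-2-j} = \calP_{-j} - \epsilon_{-j}x\calP_{-1-j} = S_{j-2} - \epsilon_{-j} x S_{j-1}$. Hmm, that has a minus sign in front of the $x$ term, so it is not \eqref{rec} unless I also flip $x \mapsto -x$.

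Therefore the clean plan is: show that $S_j := \calP_{-2-j}$ equals $(-1)^? P_j(-x)$ type object — but $\calP_k(-x) = (-1)^k \calP_k(x)$ only if the $\epsilon$'s are all $1$, which they are not. So instead I will pass through Lemma~\ref{lem.R} with the sign absorbed: observe that $T_j := (-1)^j \calP_{-2-j}$ satisfies $T_j = (-1)^j S_j = (-1)^j(S_{j-2} - \epsilon_{-j} x S_{j-1}) = (-1)^{j-2}S_{j-2} + \epsilon_{-j} x (-1)^{j-1} S_{j-1} = T_{j-2} + \epsilon_{-j} x T_{j-1} = \epsilon_j x T_{j-1} + T_{j-2}$, now using $\epsilon_{-j} = \epsilon_j$ — this is exactly \eqref{rec}. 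Then Lemma~\ref{lem.R} gives $T_j = T_0 \calP_j + T_{-1} \calQ_j$. Compute the initial values: $T_0 = \calP_{-2}$ and $T_{-1} = -\calP_{-1} = 0$ (since $\calP_{-1}=0$), while $\calP_{-2} = \calP_0 - \epsilon_0 x \calP_{-1} = 1$ from \eqref{rec} at $m=0$. Hence $T_j = \calP_j$, i.e. $(-1)^j \calP_{-2-j} = \calP_j$ — but that has an unwanted sign $(-1)^j$ compared with the claim.

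The hard part is clearly this sign. I suspect the resolution is that $\epsilon_{-j} = \epsilon_j$ is actually \emph{false} and the correct palindromy is $\epsilon_{-j} = \epsilon_{j+2}$ or similar coming from $\epsilon_m = \epsilon_{p-m}$, and redoing the computation with the correct identity will make the stray $(-1)^j$ disappear; alternatively one uses $\calP_{-2-k}=\calP_k$ as equivalent (via Lemma~\ref{lem.e1}) to the symmetry $\calP_{p-2-k} = \epsilon$-twist of $\calP_k$, which is exactly the statement ``$\rho(w)$ is symmetric'' / the palindromic property $\epsilon_m = \epsilon_{p-m}$ already invoked in Proposition~\ref{prop.iota}. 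So the cleanest plan: reduce $\calP_{-2-k}=\calP_k$ to the palindromic identity $\epsilon_m = \epsilon_{p-m}$ ($0<m<p$), deduce from it the full symmetry $\epsilon_{-1-m} = \epsilon_{-1+m}$ (reflection about $-1$, equivalently about $p-1\equiv -1$), note this is precisely the reflection symmetry of the recursion-index needed so that $j\mapsto \calP_{-2-j}$ solves \eqref{rec} \emph{without} any $x\mapsto -x$ twist, and then invoke Lemma~\ref{lem.R} with $S_0 = \calP_{-2} = 1$, $S_{-1} = \calP_{-1} = 0$ to conclude $S_j = \calP_j$. I will write the argument in that order, flagging the derivation of the reflection symmetry of $\epsilon$ about $-1$ (from $\epsilon_m=\epsilon_{p-m}$ plus $p$-antiperiodicity $\epsilon_{m+p}=-\epsilon_m$... so $\epsilon_{-m} = \epsilon_{p-m} \cdot(\text{sign from shifting by }p)$) as the one genuinely delicate point; everything else is a two-line induction via Lemma~\ref{lem.R}.
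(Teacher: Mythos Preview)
Your approach is the same as the paper's, but you made a sign error that generated all the confusion. Your floor computation is wrong: for $p\nmid k$ (so $kq/p\notin\Z$) one has $\lfloor -kq/p\rfloor = -\lfloor kq/p\rfloor - 1$, hence $\epsilon_{-k} = -\epsilon_k$, \emph{not} $\epsilon_{-k}=\epsilon_k$. With the correct sign, your own computation
\[
S_j := \calP_{-2-j} = -\epsilon_{-j}\,x\,S_{j-1} + S_{j-2} = \epsilon_j\,x\,S_{j-1} + S_{j-2}
\]
already gives exactly the recursion \eqref{rec}, with no need for the $(-1)^j$ twist $T_j$. Then Lemma~\ref{lem.R} together with $S_0=\calP_{-2}=1$ and $S_{-1}=\calP_{-1}=0$ yields $S_j=\calP_j$ directly. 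This is precisely the paper's argument. One point you should not omit: the relation $\epsilon_{-k}=-\epsilon_k$ requires $p\nmid k$, so the induction via Lemma~\ref{lem.R} only establishes $\calP_{-2-k}=\calP_k$ for $0\le k\le p-1$; the paper then extends to all $k\in\Z$ using the $2p$-periodicity from Lemma~\ref{lem.e1}.
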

\begin{proof}
Since $\epsilon_{-k}=-\epsilon_k$ for $p\nmid k$, the sequence $P_{-2-k}$ satisfies the recursion \eqref{rec} for $0<k<p$. By Lemma~\ref{lem.R}, $P_{-2-k}=P_{-2}P_k+P_{-1}Q_k = P_{k}$ holds for $0 < k <p$, which also holds for $k=0$. Using Lemma~\ref{lem.e1}, the formula extends to all $k$ in $V$.
\end{proof}

\begin{Lemma}\label{omega}
One has $\calP_{\ell'-1} \calQ_{p-2}=-\iota(\calP_{\ell'}+\calP_{\ell'-2})$
where $\ell'$ is the unique odd integer satisfying $0<\ell'<2p$ and $\ell'q \equiv-1$ modulo $2p$. 
\end{Lemma}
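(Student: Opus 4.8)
\emph{Plan.} The plan is to prove the cleaner equivalent identity $\calP_{\ell'-1}\calQ_{p-2}=\calP_{p-\ell'}+\calP_{p-\ell'-2}$ in $V$, which is obtained from the stated one by applying $\iota\calP_k=\calP_{p-2}\calP_k=(-1)^k\calP_{p-2-k}$ (the last identity of Lemma~\ref{lem.K}) to both terms on the right and using that $\ell'$ is odd. It is convenient to introduce the unique integer $m$ with $0<m<p$ and $qm\equiv-1\pmod p$: then $\ell'=m$ if $\ell'<p$, while $\ell'=m+p$ with $m$ even if $\ell'>p$, in which case $\calP_{\ell'-1}=-\iota\calP_{m-1}$ by Lemma~\ref{lem.e1}. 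Note $m\ge 2$ since $q$ and $p$ are odd. Either way the problem is reduced to a statement about $\calP_{m-1}\calQ_{p-2}$.

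The first ingredient is a bilinear ``bridge identity''. Splitting the matrix identity \eqref{eqn.sym} as $\prod_{k=1}^{p-1}=\bigl(\prod_{k=1}^{a-1}\bigr)\bigl(\prod_{k=a}^{p-1}\bigr)$, using the palindromy $\epsilon_k=\epsilon_{p-k}$ and the symmetry of each factor $\left(\begin{smallmatrix}\epsilon_kX&1\\1&0\end{smallmatrix}\right)$ to recognize the second block as the transpose of $\prod_{k=1}^{p-a}$, then reading off the entries of $\rho(w)=\left(\begin{smallmatrix}0&\iota\\\iota&\calQ_{p-2}\end{smallmatrix}\right)$ (Proposition~\ref{prop.iota} and \eqref{eqn.w}) and invoking $\det\bigl(\prod_{k=1}^{a-1}\bigr)=(-1)^{a-1}$, one obtains
$$\calP_{a-1}\calQ_{p-2}=\iota\,\calQ_{a-1}+(-1)^{a-1}\calQ_{p-a-1},\qquad 1\le a\le p-1.$$
Taking $a=m$ reduces everything to the two values $\calQ_{m-1}$ and $\calQ_{p-m-1}$, i.e.\ to $\calQ$ at the distinguished indices $m$ (where $qm\equiv-1$) and $p-m$ (where $q(p-m)\equiv+1$). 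The second and crucial ingredient is the elementary fact that whenever $0<m<p$ and $qm\equiv\pm1\pmod p$ one has $\epsilon_{m-j}=(-1)^m\epsilon_j$ for all $0<j<m$; this I would prove by writing $qm=kp\mp1$, checking that $\lfloor(m-j)q/p\rfloor=k-1-\lfloor jq/p\rfloor$ (no carry occurs because no $j$ in the range satisfies $jq\equiv\mp1\pmod p$), and noting that $k\equiv m+1\pmod2$ since $p,q$ are odd. Granting this, reversing the continuant, $P_{m-2}=K(\epsilon_1X,\dots,\epsilon_{m-2}X)=K(\epsilon_{m-2}X,\dots,\epsilon_1X)$, substituting $\epsilon_{m-i}=(-1)^m\epsilon_i$, and using the scaling property $K(-Y_1,\dots,-Y_r)=(-1)^rK(Y_1,\dots,Y_r)$ gives $P_{m-2}=(-1)^mQ_{m-1}$, hence $\calQ_{m-1}=(-1)^m\calP_{m-2}$ in $V$; the same argument applied to $p-m$ yields $\calQ_{p-m-1}=(-1)^{m-1}\calP_{p-m-2}$.

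To conclude, substitute these into the bridge identity and simplify using only $\iota^2=-1$, the relation $\iota\calP_k=(-1)^k\calP_{p-2-k}$, and Lemmas~\ref{lem.e1} and~\ref{lem.e2} (the latter in the form $\calP_{-m}=\calP_{m-2}$, $\calP_{-m-2}=\calP_m$). When $\ell'<p$ one finds $\calP_{\ell'-1}\calQ_{p-2}=(-1)^m\iota\calP_{m-2}+\calP_{p-m-2}=\calP_{p-m}+\calP_{p-m-2}$; when $\ell'>p$, after replacing $\calP_{\ell'-1}$ by $-\iota\calP_{m-1}$ one finds $\calP_{\ell'-1}\calQ_{p-2}=\calP_{m-2}+\calP_m=\calP_{-m}+\calP_{-m-2}$; in both cases this equals $\calP_{p-\ell'}+\calP_{p-\ell'-2}$, as wanted. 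I expect the main obstacle to be the sign-sequence identity $\epsilon_{m-j}=(-1)^m\epsilon_j$: it is the combinatorial shadow of the isotopy $K(p,q)\cong K(p,\ell)$, and the delicate point is the floor-function bookkeeping — in particular arranging that the single sign $(-1)^m$ emerges uniformly whether $qm\equiv+1$ or $qm\equiv-1\pmod p$. Everything after that is formal manipulation with continuants and the three reflection/periodicity lemmas.
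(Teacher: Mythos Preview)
Your argument is correct, but it takes a genuinely different and considerably longer route than the paper's.

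The paper's proof is a three-line application of Lemma~\ref{lem.R}: from $\ell'q\equiv -1\pmod{2p}$ one has the \emph{shift} identity $\epsilon_{\ell'+k}=\epsilon_k$ whenever $p\nmid k$, so the sequence $k\mapsto P_{\ell'+k}$ satisfies the recursion~\eqref{rec} on $0<k<p$ and therefore $P_{\ell'+k}=P_{\ell'}P_k+P_{\ell'-1}Q_k$ there; evaluating at $k=p-2$ and using Lemma~\ref{lem.e1} for $\calP_{\ell'+p-2}=-\iota\calP_{\ell'-2}$ finishes immediately. There is no case split and no appeal to $\iota\calP_k=(-1)^k\calP_{p-2-k}$.

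Your approach instead works modulo $p$ rather than $2p$, replacing the shift identity by the \emph{reflection} identity $\epsilon_{m-j}=(-1)^m\epsilon_j$ for $qm\equiv\pm1\pmod p$. This forces you to (i) derive a separate bridge identity $\calP_{a-1}\calQ_{p-2}=\iota\calQ_{a-1}+(-1)^{a-1}\calQ_{p-a-1}$ from the matrix factorization, (ii) convert $\calQ$'s to $\calP$'s via the reflection, and (iii) split into cases $\ell'\lessgtr p$. All of this checks out (your floor-function computation for the reflection identity is fine, and the continuant manipulations are routine), and the auxiliary identities $\calQ_{m-1}=(-1)^m\calP_{m-2}$ and the bridge formula are pleasant byproducts. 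But the paper's shift trick is the cleaner device here: it exploits the full $2p$-periodicity built into the definition of $\ell'$, avoids case analysis entirely, and feeds directly into Lemma~\ref{lem.R}, which is also the engine behind Lemma~\ref{sumoftwo}.
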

\begin{proof}
One easily proves that $\epsilon_{\ell'+k}=\epsilon_k$ if $p\nmid k$. This implies that $P_{\ell'+k}$ satisfies the recursion \eqref{rec} for $0<k<p$ and thus, by Lemma~\ref{lem.R}, 
\begin{equation*}
P_{\ell'+k}=P_{\ell'}P_k+P_{\ell'-1}Q_k.
\end{equation*}
for $0<k<p$.  
Taking $k=p-2$, we have $P_{\ell'+p-2}=P_{\ell'}P_{p-2}+P_{\ell'-1}Q_{p-2}$ which, combined with Lemma~\ref{lem.e1}, gives $-\iota \calP_{\ell'-2}=\iota\calP_{\ell'}+\calP_{\ell'-1} \calQ_{p-2}$.
\end{proof}

\begin{Lemma}\label{sumoftwo}
For all $-1\le k<p$ one has $$(P_{\ell'}-P_{\ell'-2}) P_k=P_{\ell'+k}-P_{\ell'-k-2}.$$ The same equation holds with a minus sign for $-p <k<0$. 
\end{Lemma}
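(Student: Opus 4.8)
The plan is to recognise the right-hand side, as a function of $k$, as a solution of the very three-term recursion \eqref{rec} that defines $P_k$, and to pin it down by its values at $k=-1$ and $k=0$. Accordingly, set $S_k:=P_{\ell'+k}-P_{\ell'-k-2}$, so that the claimed identity for $-1\le k<p$ is precisely $(P_{\ell'}-P_{\ell'-2})P_k=S_k$.

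First I would record the arithmetic of the signs. Recall from the proof of Lemma~\ref{omega} that $\epsilon_{\ell'+k}=\epsilon_k$ whenever $p\nmid k$; this uses $\ell'q\equiv-1\pmod{2p}$ together with the elementary fact that $\lfloor (mq-1)/p\rfloor=\lfloor mq/p\rfloor$ unless $p\mid m$. Combining it with $\epsilon_{-k}=-\epsilon_k$ for $p\nmid k$ (used in the proof of Lemma~\ref{lem.e2}) gives $\epsilon_{\ell'-k}=-\epsilon_k$ for $p\nmid k$. Now I would feed these into the recursion $P_m=\epsilon_m xP_{m-1}+P_{m-2}$: taking $m=\ell'+k$ shows that $k\mapsto P_{\ell'+k}$ satisfies \eqref{rec} for $1\le k\le p-1$, and taking $m=\ell'-k$ gives $P_{\ell'-k-2}=P_{\ell'-k}-\epsilon_{\ell'-k}xP_{\ell'-k-1}$, which, after substituting $\epsilon_{\ell'-k}=-\epsilon_k$, says that $k\mapsto P_{\ell'-k-2}$ also satisfies \eqref{rec} for $1\le k\le p-1$. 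Hence $S_k$ satisfies \eqref{rec} on the range $1\le k\le p-1$.

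It remains to identify $S_k$. One computes $S_{-1}=P_{\ell'-1}-P_{\ell'-1}=0$ and $S_0=P_{\ell'}-P_{\ell'-2}$. Since $P_k$ and $S_k$ obey the same recursion for $1\le k\le p-1$, so does $S_k-S_0P_k$; as it vanishes at $k=-1$ and $k=0$, a forward induction via \eqref{rec} shows it vanishes for every $-1\le k\le p-1$. (This is just the argument of Lemma~\ref{lem.R}, restricted to the index window on which the coefficient sequence is exactly $(\epsilon_k)$; the $Q_k$-term drops out because $S_{-1}=0$.) This proves $(P_{\ell'}-P_{\ell'-2})P_k=P_{\ell'+k}-P_{\ell'-k-2}$ for $-1\le k<p$. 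For the negative range, write $k=-j$ with $1\le j<p$. The polynomial identity $P_{-j}=P_{j-2}$ holds (trivially for $j=1$ since both sides are $P_{-1}=0$, and by the polynomial part of Lemma~\ref{lem.e2} for $2\le j\le p-1$), so applying the case already proved with $k$ replaced by $j-2\in\{-1,0,\dots,p-3\}$ yields $(P_{\ell'}-P_{\ell'-2})P_{-j}=P_{\ell'+j-2}-P_{\ell'-j}=-(P_{\ell'+k}-P_{\ell'-k-2})$, which is the asserted identity with the minus sign.

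The only delicate point is the verification in the second paragraph that both $P_{\ell'+k}$ and $P_{\ell'-k-2}$ solve \eqref{rec} with the same coefficient sequence $(\epsilon_k)$, rather than a shifted or sign-twisted one; this is exactly where the congruence $\ell'q\equiv-1\pmod{2p}$ is used, once with each sign, and it is also why the argument is confined to the window $p\nmid k$, i.e. $1\le k\le p-1$. The rest is bookkeeping along a linear recurrence.
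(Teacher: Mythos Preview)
Your proof is correct and follows essentially the same approach as the paper: both arguments rest on the sign identities $\epsilon_{\ell'+k}=\epsilon_k$ and $\epsilon_{\ell'-k}=-\epsilon_k$ for $p\nmid k$, and both verify the claim by running the three-term recursion \eqref{rec} from the initial values at $k=-1,0$. The paper carries this out as a direct induction, expanding $(P_{\ell'}-P_{\ell'-2})P_k$ via $P_k=\epsilon_k xP_{k-1}+P_{k-2}$ and simplifying; you package the same computation as ``both sides satisfy \eqref{rec} with matching initial data'', which is the Lemma~\ref{lem.R} viewpoint. The one minor difference is the negative range: the paper simply repeats the induction there, whereas you reduce to the positive case via the polynomial identity $P_{-j}=P_{j-2}$ (valid for $1\le j\le p-1$ by the polynomial part of Lemma~\ref{lem.e2}). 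Both routes are straightforward.
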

\begin{proof}
We prove it by induction on $k$: it holds for $k=-1$ as $P_{-1}=0$ and obviously for $k=0$.
Take now $0<k<p$ and suppose the formula holds for $k-1$ and $k-2$. Then one has 
\begin{align*}
(P_{\ell'}-P_{\ell'-2})P_k&=\epsilon_kx(P_{\ell'}-P_{\ell'-2})P_{k-1}+(P_{\ell'}-P_{\ell'-2})P_{k-2}\\
&= \epsilon_k x(P_{\ell'+k-1}-P_{\ell'-k-1})+P_{\ell'+k-2}-P_{\ell'-k}\\
&= \epsilon_k(\epsilon_{\ell'+k}(P_{\ell'+k}-P_{\ell'+k-2})-\epsilon_{\ell'-k}(P_{\ell'-k}-P_{\ell'-k-2}))\\
&\quad +P_{\ell'+k-2}-P_{\ell'-k}\,.
\end{align*}
As $\epsilon_{\ell'+k}=\epsilon_k$ and $\epsilon_{\ell'-k}=-\epsilon_k$ for $0< k<p$, the last expression simplifies and gives the result.  The same induction argument is applied for $-p<k<0$.
\end{proof}

\begin{proof}[Proof of Theorem~\ref{thm.simp}]
First observe that if $0 < \ell' < p$, then $\ell = \ell'$; otherwise, $\ell = 2p - \ell'$. In either case, by Lemmas~\ref{lem.e1} and \ref{lem.e2}, we have $\calP_{\ell-1} = \calP_{\ell'-1}$.

Due to Theorem \ref{torfunda}, it suffices to show that 
\begin{equation*}
    \calP_{\ell'-1}(\calQ_{p-2}-xS)=2 \quad \text{where } S=\sum_{k=1}^{\frac{p-1}{2}}\epsilon_{2k-1}(\calP_{2k-1}+\calP_{2k-3}) .
\end{equation*}
Applying Lemma \ref{omega} with the formula $-x \calP_{\ell'-1}=\calP_{\ell'}-\calP_{\ell'-2}$, the above equality is equivalent to
\begin{equation}
\label{eqn.goal}
    (\calP_{\ell'}-\calP_{\ell'-2})S=2+\iota(\calP_{\ell'}+\calP_{\ell'-2}).
\end{equation}
On the other hand, using the symmetry $\calP_{2k-3}=\calP_{-2k+1}$ and $\epsilon_{2k-1}=-\epsilon_{-2k+1}$, we can write 
\begin{equation*}
S= \sum_{k=1}^{\frac{p-1}{2}}(\epsilon_{2k-1}\calP_{2k-1}-\epsilon_{-2k+1}\calP_{-2k+1}) 
= \sum_{-p <k<p} \delta_k \epsilon_k \calP_k 
\end{equation*}
where $\delta_k=1$ for $0<k<p$ odd, $\delta_k=-1$ for $-p<k<0$ odd, and $\delta_k=0$ otherwise. As Lemma \ref{sumoftwo} states that $(\calP_{\ell'}-\calP_{\ell'-2})\calP_k=\delta_k(\calP_{\ell'+k}-\calP_{\ell'-k-2})$ for odd $-p<k<p$, we have
\begin{equation*}
  (\calP_{\ell'}-\calP_{\ell'-2})S=\sum_{k:\text{odd}}(\epsilon'_k \calP_{\ell'+k}-\epsilon'_k \calP_{\ell'-k-2})=\sum_{k:\text{even}}(\epsilon'_{k-\ell'}-\epsilon'_{\ell'-2-k})\calP_k.  
\end{equation*}
In this formula, $\epsilon'_k = \epsilon_k$ if $p \nmid k$, and $\epsilon'_k = 0$ otherwise; the sums are taken over $k \in \mathbb{Z}/2p\mathbb{Z}$, which are well-defined by Lemma~\ref{lem.e1}. As $\calP_{2k}=\calP_{2p-2-2k}$ and $\calP_{p-1}=0$, we can pack these in the form 
\begin{equation}
\label{eqn.goal2}
    (\calP_{\ell'}-\calP_{\ell'-2})S=\sum_{k=0}^{\frac{p-3}{2}}(u_k +v_k)\calP_{2k}
\end{equation} 
where $u_k=\epsilon'_{2k-\ell'}-\epsilon'_{\ell'+2k}$ and $v_k=\epsilon'_{-2-2k-\ell'}-\epsilon'_{\ell'-2-2k}$.

\begin{itemize}[leftmargin=*]
    \item For $k=0$, one has $\epsilon'_{-\ell'} + \epsilon'_{-2-\ell'}=0$, $\epsilon'_{\ell'}=\epsilon'_{\ell'-2}=-1$, and $\calP_0=1$. It follows that  $(u_0+v_0)\calP_0=2$.
    \item For $k>0$ one has $\epsilon_{\ell'+2k}=\epsilon_{2k}=\epsilon_{2k-\ell'}$ hence $u_k=0$ unless $\ell'+2k \equiv p$ or $2k-\ell'\equiv p\pmod{2p}$: 
    \subitem If $\ell'<p$, then there is $k_0$ such that $\ell'+2k_0=p$, and one has $u_{k_0}\calP_{2k_0}=\epsilon_{2k_0-\ell'}\calP_{2k_0}=\epsilon_{p-2\ell'}\calP_{p-\ell'}=-\calP_{p-\ell'}=\iota \calP_{\ell'-2}$;
    \subitem If $\ell'>p$, then there is $k_0$ such that $2k_0-\ell'=-p$, and one has $u_{k_0} \calP_{2k_0}=-\epsilon_{\ell'+2k_0} \calP_{2k_0}=\epsilon_{\ell'-p}\calP_{\ell'-p}=-\calP_{\ell'+p}=\iota \calP_{\ell'}$.
    \item Similarly, one has $v_k=0$ for $k>0$ unless  $\ell'-2-2k \equiv p$ or $-2-2k -\ell'\equiv p \pmod{2p}$: 
    \subitem If $\ell'<p$, then there is $k_0$ such that $-2-2k_0-\ell'=-p$, and one has $v_{k_0} \calP_{2k_0}=-\epsilon_{\ell'-2-2k_0}\calP_{2k_0}=\iota \calP_{\ell'}$;
    \subitem  If $\ell'>p$, then there is $k_0$ such that $\ell'-2-2k_0=p$, and one has  $v_{k_0} \calP_{2k_0}=\epsilon_{-2-2k_0-\ell'}\calP_{2k_0}=\iota \calP_{\ell'-2}$. 
\end{itemize}
We conclude that in both cases $\ell'<p$ and $\ell'>p$, the sum in~\eqref{eqn.goal2} results in $2+\iota(\calP_{\ell'}+\calP_{\ell'-2})$. This proves Equation~\eqref{eqn.goal} and thus the theorem.
\end{proof}

\subsubsection{Adjoint representation}

In this section, we compute the torsion $\tau_2$ associated with the adjoint representation $\rho_2 = s_2 \circ \rho$.

The symmetric power $\S^2V^2$ is 3-dimensional and the representation $s_2$ is known to be isomorphic to the adjoint representation $\SL_2(V)\to \SL(\sl_2(V))$, mapping $g$ to the map $\xi\mapsto g\xi g^{-1}$. 
We denote by $\mathfrak{e}_1, \mathfrak{e}_2,\mathfrak{e}_3$ the standard basis of $\S^2V^2$ and by $\langle \cdot, \cdot \rangle : \S^2V^2 \otimes \S^2V^2 \rightarrow V$ the canonical invariant pairing on $\S^2V^2$.

The torsion $\tau_2$ is our main case of interest. However, it provides extra difficulties due to the fact that the chain complex~\eqref{cellcomplex} is not acyclic. That is, $\mathrm{H}^\ast(M,\S^2 V^2)$ does not vanish where $M$ is the knot complement of $K(p,q)$ and the coefficient $\S^2V^2$ is twisted by $\rho_2 = s_2 \circ \rho$.
More precisely, it is well-known that $\mathrm{H}^k(M,\S^2V^2)$ vanishes except for $k=1,2$. In this non-acyclic case, defining the torsion requires fixing a basis of the cohomology \cite{PortiThesis}.

Let $\partial M$ be the boundary torus of $M$ and let $m\subset \partial M$ be a meridian of $K(p,q)$ homotopic to $u\in G$. The inclusion maps induce the following homomorphisms
\begin{equation}
\label{eqn.r}
  \mathrm{H}^1(M,\S^2V^2) \to \mathrm{H}^1(m,\S^2V^2), \quad
  \mathrm{H}^2(M,\S^2V^2)\to \mathrm{H}^2(\partial M,\S^2V^2) \,.
\end{equation}
It is well-known that when $K(p,q)$ is hyperbolic and $\rho$ is a lift of the geometric representation, these two maps are isomorphism. In our case, this will follow from a computation. 

Applying the Poincar\'{e} duality, combined with the invariant pairing of $\S^2V^2$, to the isomorphisms in  \eqref{eqn.r}, we obtain
\begin{equation}
    \label{eqn.isom}
    \mathrm{H}^1(M,\S^2V^2)\simeq \mathrm{H}^0(m,\S^2V^2)^*, \quad \mathrm{H}^2(M,\S^2V^2)\simeq \mathrm{H}^0(\partial M,\S^2V^2)^*\,.
\end{equation}
By definition of the 0-th cohomology, $\mathrm{H}^0(m,\S^2V^2)$ and $\mathrm{H}^0(\partial M,\S^2V^2)$ are generated by elements of $\S^2V^2$ that are fixed by the restrictions $\rho|_m$ and $\rho|_{\partial M}$, respectively. It follows that both are 1-dimensional and may be assume to be generated by $\mathfrak{e}_1$. We specify $\mathfrak{e}_1$ as a basis for both $\mathrm{H}^0(m,\S^2V^2)$ and $\mathrm{H}^0(\partial M,\S^2V^2$). This determines a basis of $\mathrm{H}^\ast(M;\S^2V^2)$ through the isomorphisms in~\eqref{eqn.isom}. With this choice of basis, the torsion $\tau_2$ is well-defined. 

\begin{Theorem}\label{toradj}
    The torsion of the two-bridge knot $K(p,q)$ in the adjoint representation $\rho_2$ and with the homological marking referred to above is given by 
    $$\tau_2=\frac{1}{2x^2\calP_{\ell-1}^2}\sum_{k=1}^{\frac{p-1}{2}}\epsilon_{2k}\calP_{2k-1}^2$$
    where $0<\ell<p$ is the unique odd integer satisfying $q\ell \equiv \pm 1$ modulo $p$.
\end{Theorem}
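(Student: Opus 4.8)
The plan is to mimic the structure of the proof of Theorem~\ref{torfunda}, but now carrying the $3$-dimensional coefficient $\S^2V^2$ and accounting for the non-acyclicity. First I would write down $D^0$ and $D^1$ explicitly in the basis $\mathfrak{e}_1,\mathfrak{e}_2,\mathfrak{e}_3$ of $\S^2V^2$, using $s_2\rho(u)$ and $s_2\rho(v)$ together with formulas~\eqref{wu}, \eqref{wv} and \eqref{fox}. Because $\rho(u)$ and $\rho(v)$ are unipotent, $\rho_2(u-1)$ and $\rho_2(v-1)$ are nilpotent of rank one, and their images are exactly the lines killed by the invariant pairing against $\mathfrak{e}_1$; this makes $D^0$ have a $1$-dimensional kernel (spanned by $\mathfrak{e}_1$, consistent with $\mathrm{H}^0(M,\S^2V^2)\neq 0$), and forces most entries of the big matrices to vanish, just as in Lemma~\ref{lem.DD}.

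The cohomology generators must then be inserted. Using the isomorphisms~\eqref{eqn.isom}, I would augment the complex~\eqref{cellcomplex} by the chosen bases $\mathfrak{e}_1$ of $\mathrm{H}^1$ and $\mathrm{H}^2$: concretely, after choosing a cocycle representing the generator of $\mathrm{H}^1(M,\S^2V^2)$ (coming from the restriction to the meridian $m$) and a generator of $\mathrm{H}^2$ (coming from $\partial M$), the torsion becomes an alternating product of determinants of the ``stacked'' matrices $(D^0\mid z_1)$ and $(D^1\mid z_2)$ in the usual way. The key computational content is then a Cayley-type evaluation of these determinants, exactly parallel to~\eqref{eqn.tau}, which will reduce $\tau_2$ to a ratio whose numerator is a quadratic expression in the $\calP_k$ and $\calQ_k$. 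I expect the numerator to simplify, via the palindromy $\epsilon_k=\epsilon_{p-k}$, the identity $\calP_{p-2}\calP_k=(-1)^k\calP_{p-2-k}$ from Lemma~\ref{lem.K}, and the reflection symmetries of Lemmas~\ref{lem.e1}, \ref{lem.e2}, to the clean quadratic sum $\sum_k \epsilon_{2k}\calP_{2k-1}^2$; the denominator $x^2\calP_{\ell-1}^2$ should appear exactly as the square of the $\tau_1$ denominator, reflecting that $\tau_2$ is built from the ``square'' of the tautological data together with the extra boundary contribution.

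The main obstacle, I expect, is twofold. First, one must verify that the maps in~\eqref{eqn.r} are genuinely isomorphisms over $V=\Q[X]/(P_{p-1})$ (not merely at the geometric representation), so that the homological marking is well-defined; I would establish this by showing the relevant determinant is invertible in $V$, most economically by exhibiting its inverse as a polynomial in $x$ --- presumably again something like $\calP_{\ell-1}$ up to units --- using a reduction-mod-$2$ argument in the spirit of Riley's semisimplicity proof, or by identifying the kernel/cokernel directly. Second, tracking the bases through Poincaré duality and the invariant pairing introduces signs and normalization factors (the factor $\tfrac12$ and the power of $2$), and getting these right requires care with the canonical pairing $\langle\cdot,\cdot\rangle$ on $\S^2V^2$ and with the identification $\S^2V^2\cong\sl_2(V)$; I would pin these down by comparing with the known specialization at a hyperbolic representation and with the reciprocity formula $x\Omega=x^*\Omega^*$ announced in the introduction, which serves as a consistency check on the final formula. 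Once these points are settled, the remaining manipulations are the same kind of telescoping continuant identities already used for $\tau_1$, so I do not anticipate essential new difficulties there.
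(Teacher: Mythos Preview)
Your overall architecture --- explicit differentials in the $\mathfrak{e}_1,\mathfrak{e}_2,\mathfrak{e}_3$ basis, augmenting by the chosen cohomology generators, then a Cayley-type alternating determinant --- matches the paper. In fact the numerator $\sum_k \epsilon_{2k}\calP_{2k-1}^2$ drops out \emph{directly} from the $(1,3)$-entry of $\rho_2(\partial w/\partial v)$ via~\eqref{wv}, with no need for the palindromy and reflection identities you anticipate; that part is easier than you think.

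The genuine gap is your treatment of the $\mathrm{H}^2$ marking and the origin of $\calP_{\ell-1}^2$. Your guess that the denominator is ``the square of the $\tau_1$ denominator'' is wrong: $\calP_{\ell-1}^2$ has nothing to do with $\tau_1$. In the paper it arises from making the restriction $\mathrm{H}^2(M,\S^2V^2)\to\mathrm{H}^2(\partial M,\S^2V^2)$ explicit at the \emph{chain} level. This requires writing down a cellular map from the presentation $2$-complex of $G$ to the standard torus complex for $\pi_1(\partial M)=\langle a,b\mid aba^{-1}b^{-1}\rangle$, where $a\mapsto u$ and $b\mapsto w^*w$ (the blackboard longitude). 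The $2$-cell component of that chain map is governed by a free-group identity
\[
[u,\,w^*w]=[g,r][r,w^*],\qquad g=u^{\epsilon_0}v^{\epsilon_1}\cdots u^{\epsilon_{\ell'-1}}v^{\epsilon_{\ell'}},
\]
and the resulting map $f_2:\S^2V^2\to V$ reads $\xi\mapsto\langle\xi,\rho_2(g)^{-1}\mathfrak{e}_1-\rho_2(w^*)^{-1}\mathfrak{e}_1\rangle$. Computing $\rho(g)$ via~\eqref{eqn.PQmatrix} gives bottom row $(\calP_{\ell'},\calP_{\ell'-1})$, whence $\rho_2(g)^{-1}\mathfrak{e}_1$ has $\mathfrak{e}_1$-coefficient $\calP_{\ell'-1}^2=\calP_{\ell-1}^2$; this is the entry that survives in the Cayley extraction and becomes $\Delta_2=\calP_{\ell-1}^2$. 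Without identifying the word $g$ and this commutator identity, you will not be able to produce the correct $z_2$ in your ``stacked'' matrix, and the factor $\calP_{\ell-1}^2$ will simply not appear. Similarly, the invertibility you worry about is settled not by a mod-$2$ argument but a posteriori, by relating $\Delta_1$ to the Frobenius element $\Omega$ (Equation~\eqref{toromega}), which is invertible by construction.
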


\begin{proof}
    Let $C^\ast$ be the chain complex~\eqref{cellcomplex} and $\mathrm{H}^\ast$ be its homology. We first define a quasi-isomorphism $f_\ast:C^*\to \mathrm{H}^\ast$.
    \begin{equation*}
        \begin{tikzcd}
        C^\ast : \ar["f_\ast",d] & \S^2V^2 \ar[r,"D^1"] \ar[d]  & \S^2V^2\oplus \S^2V^2 \ar[r,"D^2"]\ar[d,"f_1"]    & \S^2V^2 \ar[d,"f_2"]\\ 
        \mathrm{H}^\ast : & 0 \ar[r, "0"]   & V \ar[r,"0"]          & V
        \end{tikzcd}    
    \end{equation*}
    Note that $\S^2 V^2$ is isomorphic to $V^3$ with basis $\mathfrak{e}_1,\mathfrak{e}_2,\mathfrak{e}_3$.
    
    Since $m$ is a 1-subcomplex of the 2-complex used to define $C^\ast$, one can choose $f_1$ by mapping $(\xi,\eta)$ to $\langle \xi, \mathfrak{e}_1 \rangle$. In the standard basis, we have $f_1=(0,0,1,0,0,0)$. 

    For the second one, we consider $\partial M\simeq S^1\times S^1$ and a cellular map homotopic to the inclusion $\partial M \hookrightarrow M$. The boundary torus has a cell decomposition corresponding to the presentation  $$\pi_1(\partial M) \simeq \Z^2=\langle a,b\,|\,R\rangle, \quad  R=aba^{-1}b^{-1}.$$
    The natural morphism $\pi_1(\partial M) \to  \pi_1(M)$ sends $a$ to the meridian $u$ and $b$ to the (blackboard-framed) longitude $w^*w$. Here $w^*$ is the word $w$ read from right to left; see \cite[Prop.1] {Riley}. Note that this morphism is well-defined due to the following equality that holds in $F_2$: 
    \begin{equation}
        \label{eqn.g}
        [u,w^*w]=[g,r][r,w^*], \quad g=u^{\epsilon_0}v^{\epsilon_1}\cdots u^{\epsilon_{\ell'-1}}v^{\epsilon_{\ell'}}
    \end{equation} where $0<\ell'<2p$ is the inverse of $-q$ modulo $2p$; see Lemma 2 in \cite{vol2ponts}. This allows us to write down explicitly the morphism of complex inducing the restriction map $\mathrm{H}^*(M,\S^2V^2)\to \mathrm{H}^*(\partial M,\S^2 V^2)$ as follows:
    \begin{equation*}
        \begin{tikzcd}[column sep=3cm]
        \S^2V^2 \ar[r,"{(\rho_2(u-1),\rho_2(v-1))}"] \ar["=",d]  & \S^2V^2\oplus \S^2V^2 \ar[r,"{(\rho_2(\frac{\partial r}{\partial u}),\rho_2(\frac{\partial r}{\partial v}))}"]\ar[d,"\Phi"]    & \S^2V^2 \ar[d,"\Psi"]\\
        \S^2V^2  \ar[r, "{(\rho_2(a-1),\rho_2(b-1))}"]   & \S^2V^2 \oplus \S^2V^2 \ar[r,"{(\rho_2(\frac{\partial R}{\partial a}),\rho_2(\frac{\partial R}{\partial b}))}"]          & \S^2V^2
        \end{tikzcd}    
    \end{equation*}
    In this diagram, $\Phi$ is the Jacobian matrix of the transformation $(a,b)\mapsto(u,w^*w)$ and,  $\Psi$ sends $\xi$ to $\rho_2(g)\xi-\rho_2(w^*)\xi$; see Equation~\eqref{eqn.g}. 
    Recall that we chose the basis of $\mathrm{H}^0(\partial M,\S^2V^2)$ as $\mathfrak{e}_1$, hence the map $f_2 : \S^2V^2 \rightarrow V$ is given by
    $$f_2(\xi) = \langle \rho_2(g)\xi \, ,\, \mathfrak{e}_1 \rangle - \langle \rho_2(w^\ast)\xi \, ,\, \mathfrak{e}_1 \rangle = \langle \xi \, ,\, \rho_2(g)^{-1}\mathfrak{e}_1 \rangle - \langle \xi \, ,\, \rho_2(w^\ast)^{-1}\mathfrak{e}_1 \rangle .$$ 
    Recall from Equation~\eqref{eqn.w} that $\rho(w)$ has entries $\calP_{p-1}=0$, $\calP_{p-2}=\calQ_{p-1}$, and $\calQ_{p-2}$ where $\calP_{p-2}^2=-1$. From the symmetry between $w$ and $w^\ast$, we have
    \begin{equation}
    \label{eqn.wstar}
        \rho(w^\ast) = \begin{pmatrix} \calQ_{p-2} & \calP_{p-2}\\ \calP_{p-2} & 0\end{pmatrix}.
    \end{equation}
It follows that $\rho_2(w^\ast)^{-1}\mathfrak{e}_1= - \mathfrak{e}_3$.
On the other hand, we have 
$$\rho(g)=\begin{pmatrix} K(\epsilon_0 x,\ldots,\epsilon_{\ell'} x) & K(\epsilon_0 x,\ldots,\epsilon_{\ell'-1}x) \\ K(\epsilon_1 x,\ldots,\epsilon_{\ell'} x) & K(\epsilon_1 x,\ldots,\epsilon_{\ell'-1} x)\end{pmatrix} = \begin{pmatrix}
    * & * \\
    \calP_{\ell'} & \calP_{\ell'-1}
\end{pmatrix}.$$
It follows that $\rho_2(g)^{-1} \mathfrak{e}_1 = \calP_{\ell'-1}^2 \mathfrak{e}_1 + 2 \calP_{\ell'-1} \calP_{\ell'} \mathfrak{e}_2 - \calP_{\ell'}^2 \mathfrak{e}_3$. Therefore, 
$$
f_2(\xi) = \langle \xi \, , \, \calP_{{\ell'}-1}^2 \mathfrak{e}_1 + 2\calP_{{\ell'}-1}\calP_{\ell'} \mathfrak{e}_2 - (\calP_{\ell'}^2-1) \mathfrak{e}_3 \rangle ,
$$
showing that $f_2 = (\ast, \, \ast, \, \calP_{\ell'-1}^2)$ in the matrix form.
On the other hand, if $\ell \equiv -q^{-1} \pmod{p}$, then $\ell'=\ell$; otherwise, $\ell' = 2p - \ell$. In the latter case, the identity $\calP_{2p-k} = \calP_{k-2}$ implies that $\calP_{\ell' - 1} = \calP_{2p - \ell - 1}=\calP_{\ell-1}$. Thus, in either cases, we have $\calP_{\ell'-1} = \calP_{\ell-1}$ and thus $f_2 = (\ast, \, \ast, \, \calP_{\ell-1}^2)$

Finally, we compute the torsion $\tau_2$ from the acyclic chain complex associated with the mapping cone of $f_\ast : C^\ast \rightarrow \mathrm{H}^\ast$:
\begin{equation*}
    \begin{tikzcd}[column sep=1.5cm]
        \S^2 V^2  \ar[r,"{(u-1,v-1)}","D_0"'] & \S^2V^2 \oplus \S^2V^2 \ar[r,"{(\frac{\partial r}{\partial u}, \frac{\partial r}{\partial v},f_1)}","D_1"'] & \S^2V^2 \oplus V \ar[r,"{(f_2,0)}","D_2"'] & V \,.
    \end{tikzcd}
\end{equation*}
As computed, the differentials $D^i$ have matrix form
$$D^0=\left(\begin{array}{ccc}0 &-x &-x^2\\ 0& 0 &2x\\ 0 & 0 & 0 \\ \hline 0 & 0 &0\\ -2x & 0 & 0 \\ -x^2 & x & 0\end{array}\right),\quad D^1=\left(\begin{array}{ccc|ccc}&&&&&\\&\rho_2(\frac{\partial r}{\partial u})&&& \rho_2(\frac{\partial r}{\partial v})&\\&&&&& \\\hline 0&0&1&0&0&0 \end{array}\right)$$
and $D^2=\left(*\,*\,P_{\ell-1}^2\, 0\right)$.
To apply the Cayley formula, we extract the rows $1,2,5$ of $D^0$ to get a determinant $\Delta_0$, we extract the (complementary) columns $3,4,6$ and the rows $1,2,6$ of $D_1$ to get a determinant $\Delta_1$ and keep the third entry of $D_2$ to get $\Delta_2$. Then the Cayley formula (see, e.g. \cite[p.485]{GKZ}) says that
\begin{equation}
\label{eqn.delta}
\tau_2 = \frac{\Delta_1}{\Delta_0 \Delta_2}
\end{equation}
where $\Delta_0 = 4x^3$, $\Delta_1 = -2x(\sum_{k=1}^{\frac{p-1}{2}} \epsilon_{2k}\calP_{2k-1}^2) $, and $\Delta_2 = \calP_{\ell-1}^2$. Here $\Delta_1$ is computed from (see Equation~\eqref{fox})
$$
\rho_2 \left( \frac{\partial r}{\partial v} \right) = \begin{pmatrix}
    0 & 0 & 0 \\
    2x & 0 & 0 \\
    x^2 & -x & 0
\end{pmatrix}
\begin{pmatrix}
    \ast & \ast & a \\
    \ast & \ast & \ast \\
    \ast & \ast & \ast
\end{pmatrix} - I = \begin{pmatrix}
    -1 & 0 & 0 \\
    \ast & \ast & -2xa \\
    \ast & \ast & \ast
\end{pmatrix}
$$
where $a=-\sum_{k=1}^{\frac{p-1}{2}} \epsilon_{2k}\calP_{2k-1}^2$, which follows from Equation~\eqref{wv}. Substituting $\Delta_i$  into the Cayley formula~\eqref{eqn.delta}, we obtain the desired formula.

To show acyclicity of the cone complex, it is sufficicient to show that no $\Delta_i$ vanish. The non trivial one is $\Delta_1$, and its relation with the invertible element $\Omega$ in Section~\ref{sec.frob} will show that it is invertible; see Equation~\eqref{toromega}. 
\end{proof}

\subsection{Comparing reciprocal pairs}
\label{sec.dual}

Let $0<\ell<p$ be the unique odd integer such that $\ell q\equiv\pm 1 \pmod{p}$. Recall from Section~\ref{sec.top} that  the knots $K(p,q)$ and $K(p,\ell)$ are isotopic. It follows that the orbifolds $\calO(p,q)$ and $\calO(p,\ell)$ are homeomorphic, and the algebras $V$ and $V^\ast$ associated to $(p,q)$ and $(p,\ell)$, respectively, are isomorphic.
It is useful to make this algebra isomorphism explicit for two reasons. First, it clarifies the denominator of the formula in Theorems~\ref{thm.simp} and \ref{toradj}, providing a topological explanation for the discrepancy between the torsions and the element $\Omega$ in Section~\ref{sec.frob} below that will be needed to compute signatures. Second, it provides one of the few ways to verify the formulas in these sections: since  torsions are topological invariants, they should correspond under the isomorphism. 

\subsubsection{Explicit isomorphism}

\begin{Proposition}    
\label{thm.dual}
Let $\ell$ be the integer as above and let $V=\Q[X]/(P_{p-1})$ and $V^\ast=\Q[X^\ast]/(P^\ast_{p-1})$ the algebras associated to $K(p,q)$ and $K(p,\ell)$, respectively. Then the maps  
$$V^\ast\to V, \ x^\ast\mapsto x \calP_{\ell-1}(x) \quad \text{and} \quad V\to V^\ast, \  x\mapsto x^\ast \calP^\ast_{q-1}(x^\ast)$$
are algebra isomorphisms, inverse to each other.
\end{Proposition}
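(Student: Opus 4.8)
The plan is to exhibit the two maps explicitly, verify they are well-defined ring homomorphisms, and then check they are mutually inverse; since both $V$ and $V^\ast$ have dimension $p-1$ over $\Q$, it is enough to produce a one-sided inverse. The heart of the matter is the identity relating the two presentations coming from the topological fact, recalled in Section~\ref{sec.top}, that interchanging the roles of $B_1$ and $B_2$ turns the presentation~\eqref{presentation} for $K(p,q)$ into the presentation for $K(p,\ell)$. Concretely, under the exchange homeomorphism the meridian pair $(u,v)$ of one diagram is conjugate to the meridian pair of the other, so the parabolic representation $\rho^\ast$ of $G^\ast=\pi_1(S^3\setminus K(p,\ell))$ with parameter $x^\ast$ must be conjugate, inside $\SL_2(V)$, to the representation $\rho$ with parameter $x$; comparing off-diagonal entries of suitable words forces a polynomial relation between $x^\ast$ and $x$. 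I expect this relation to be exactly $x^\ast = x\,\calP_{\ell-1}(x)$, which I would derive as follows.

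First I would recall from Section~\ref{sec.top} and Section~\ref{sec.para} that $K(p,q)$ is obtained from the two balls $B_1,B_2$, and that swapping them replaces the continued-fraction data $q/p$ by $\ell/p$ (with $q\ell\equiv\pm1$). On the level of the group $\hat G$, the lifts $\hat h_1,\hat h_2$ of the amphichiral involutions are intrinsic, so the normalized form~\eqref{eqn.standard} is preserved; hence the parabolic generators of $K(p,\ell)$, once normalized as in~\eqref{rho}, are obtained from those of $K(p,q)$ by conjugation by a fixed element of $\SL_2(V)$ commuting appropriately with $Q_8$. I would then pin down this element by using the continuant matrix identity~\eqref{eqn.matK}: the word $g=u^{\epsilon_0}v^{\epsilon_1}\cdots v^{\epsilon_{\ell'}}$ appearing in~\eqref{eqn.g}, evaluated through $\rho$, has bottom row $(\calP_{\ell'},\calP_{\ell'-1})$, and after the reduction $\calP_{\ell'-1}=\calP_{\ell-1}$ used in the proof of Theorem~\ref{toradj}, this is precisely the data that conjugates the $K(p,\ell)$-meridians into the standard symmetric form with parameter $x\calP_{\ell-1}$. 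This yields the ring homomorphism $V^\ast\to V$, $x^\ast\mapsto x\calP_{\ell-1}(x)$: it is well-defined because the defining relation $P^\ast_{p-1}(x^\ast)=0$ of $V^\ast$ translates, under conjugation-invariance of ``$\rho$ is a representation'', into the relation $P_{p-1}(x)=0$ of $V$, i.e. the Riley polynomial of $K(p,\ell)$ evaluated at $x\calP_{\ell-1}(x)$ lies in $(P_{p-1}(x))$.

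By the symmetry of the construction (applying the same argument with the roles of $(p,q)$ and $(p,\ell)$ exchanged, noting that the reciprocal of $\ell$ modulo $p$ is $q$ and the corresponding even integer is $q-1$), one gets a ring homomorphism $V\to V^\ast$, $x\mapsto x^\ast\calP^\ast_{q-1}(x^\ast)$. It then remains to check that the composite $V^\ast\to V\to V^\ast$ is the identity, i.e. that $x^\ast\calP^\ast_{q-1}(x^\ast)$ maps to $x^\ast$ under $x^\ast\mapsto x\calP_{\ell-1}(x)$; equivalently, that $x\,\calP_{\ell-1}(x)\cdot\calP^\ast_{q-1}\!\bigl(x\calP_{\ell-1}(x)\bigr)=x$ in $V$. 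This is a purely algebraic identity in $V=\Q[X]/(P_{p-1})$, which I would verify by translating $\calP^\ast_{q-1}$ back into continuants in the $x$-variable via~\eqref{eqn.matK} and the relation $\calP_{\ell'-1}=\calP_{\ell-1}$, and then using the continuant identities of Lemma~\ref{lem.K} together with Lemmas~\ref{lem.e1} and~\ref{lem.e2}; the key input is that the product of the two conjugating matrices (the one built from $\ell$ and the one built from $q$) is scalar in $\SL_2(V)$, which forces the composite to fix $x^\ast$. Since both maps are ring homomorphisms between $\Q$-algebras of the same finite dimension and one composite is the identity, both are isomorphisms inverse to each other.

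The main obstacle I anticipate is bookkeeping: getting the correct element of $\SL_2(V)$ implementing the exchange of $B_1$ and $B_2$ — in particular tracking the distinction $q\ell\equiv+1$ versus $q\ell\equiv-1$ and the passage between $\ell'$ (inverse mod $2p$) and $\ell$ (inverse mod $p$), which is exactly the place where $\calP_{\ell'-1}=\calP_{\ell-1}$ is invoked — and then confirming that the resulting substitution genuinely sends the ideal $(P^\ast_{p-1})$ into $(P_{p-1})$ rather than merely a single root to a single root. Once the conjugating matrix is correctly identified, the verification that the two substitutions are mutually inverse reduces to the scalar-ness of a product of continuant matrices, which is routine given Lemma~\ref{lem.K}.
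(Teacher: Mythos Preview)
Your strategy is sound and follows the same topological idea as the paper --- use the exchange of the two balls $B_1,B_2$ to relate the two presentations --- but the paper's execution is considerably more direct than what you outline.

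Rather than producing an explicit conjugating matrix in $\SL_2(V)$ and comparing off-diagonal entries, the paper works entirely with a single trace. In $B_1$ the loop $uv$ encircles the two ramified arcs and has $\Tr\rho(uv)=2+x^2$. In $B_2$ the analogous encircling loop is $u\,m_\ell$, where $m_\ell=(u^{\epsilon_1}v^{\epsilon_2}\cdots v^{\epsilon_{\ell-1}})^{-1}\,v\,(u^{\epsilon_1}v^{\epsilon_2}\cdots v^{\epsilon_{\ell-1}})$ is the Wirtinger generator reached by following the Schubert diagram to the crossing at $(1,1/p)$. Using Equation~\eqref{eqn.PQmatrix} one computes $\rho(m_\ell)$ explicitly and reads off $\Tr\rho(u\,m_\ell)=2+x^2\calP_{\ell-1}^2$. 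Since the $B_1\leftrightarrow B_2$ homeomorphism swaps $uv$ with $u\,m_\ell$, it swaps the trace generators $2+x^2$ and $2+x^2\calP_{\ell-1}^2$, giving $x^\ast\mapsto x\calP_{\ell-1}$ immediately. The inverse map then comes \emph{for free} by applying the identical argument with the roles of $B_1$ and $B_2$ exchanged; no continuant identity or ``scalar-ness of a product of conjugating matrices'' is needed.

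What this buys: the trace is conjugation-invariant, so well-definedness of the ring map is automatic (the Riley relation for $(p,\ell)$ is the trace relation in $B_2$, which holds because $\rho$ is a representation), and the bookkeeping you worry about --- tracking $q\ell\equiv\pm1$, the $\ell$ versus $\ell'$ distinction, and the word $g$ from \eqref{eqn.g} --- evaporates. Your route via an explicit conjugator would work, but the paper's single trace computation replaces all of it with one $2\times 2$ matrix product.
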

\begin{proof}
    Recall from Section~\ref{sec.top} that we can decompose the 3-sphere into two ball $B_1$ and $B_2$ where the knot $K(p,q)$ becomes two ramified arcs in each ball. Let us draw Schubert's normal form of $K(p,q)$ in the $\R^2$-plane so that its vertical edges run from $(0,0)$ to $(0,1)$ and from $(1,0)$ to $(1,1)$, respectively, as in Figure~\ref{fig.schubert_proof}. From Equation~\eqref{rho}, we have $\Tr\rho(uv)=2+x^2$ where $uv$ is a loop in the ball $B_1$ that encloses the two ramified arcs (the vertical edges of the normal form). 
    \begin{figure}[htpb!]
        \centering
        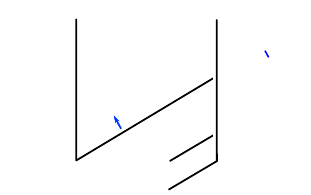
        \caption{Wirtinger generators in the normal form.}
        \label{fig.schubert_proof}
    \end{figure}

    In the other ball $B_2$, the two ramified arcs appear as arcs of slope $q/p$.  If we follow the diagram from $(0,0)$ to $(1,1/p)$, i.e., to the crossing just above $(1,0)$, the associated Wirtinger generator begins as  $v$ and is conjugated whenever it crosses a vertical edge. Let us denote this sequence of Wirtinger generators by $m_1,m_2,\ldots$ as in Figure~\ref{fig.schubert_proof}. Then we have
    $$ 
       m_k = (u^{\epsilon_1}v^{\epsilon_2} \cdots v^{\epsilon_{k-1}})^{-1}\, v \,(u^{\epsilon_1}v^{\epsilon_2} \cdots v^{\epsilon_{k-1}}).
    $$
    When we arrive to the crossing at $(1,1/p)$, the corresponding Wirtinger generator is $m_\ell$ 
    and Equation~\eqref{eqn.PQmatrix} yields 
    \begin{align*}
        \rho(m_\ell) & = \begin{pmatrix}
        \calQ_{\ell-2} & - \calP_{\ell-2} \\
        -\calQ_{\ell-1} & \calP_{\ell-1} 
    \end{pmatrix}
    \begin{pmatrix}
        1 & 0 \\
        x & 1
    \end{pmatrix}
    \begin{pmatrix}
        \calP_{\ell-1} & \calP_{\ell-2} \\
        \calQ_{\ell-1} & \calQ_{\ell-2} 
    \end{pmatrix}\\
    &  =
    \begin{pmatrix}
        1-x \calP_{\ell-2}\calP_{\ell-1} & -x \calP^2_{\ell-2} \\
        x \calP_{\ell-1}^2 & 1+x  \calP_{\ell-2}\calP_{\ell-1}
    \end{pmatrix}.
    \end{align*}
    Since the vertical edge from $(1,0)$ to $(1,1)$ is attached with the arc of slope $q/p$ at $(1,0)$, the loop in $B_2$ enclosing the two ramified arcs is given by $u m_{\ell}$ as depicted in Figure~\ref{fig.schubert_proof}, with
    $$\rho(um_{\ell}) = \begin{pmatrix}
        1-x \calP_{\ell-2} \calP_{\ell-1} + x^2 \calP_{\ell-1}^2 & * \\
        * & 1+x\calP_{\ell-2}\calP_{\ell-1}
    \end{pmatrix}
    $$
    and $\Tr \rho(um_{\ell}) = 2+ x^2 \calP_{\ell-1}^2$. As the homeomorphism between the orbifolds $\calO(p,q)$ and $\calO(p,\ell)$ is given by interchanging the role of $B_1$ and $B_2$, it swaps the loops $uv$ and $um_{\ell}$, and thus exchanges their traces $2+x^2$ and $2+x^2 \calP_{\ell-1}^2$. It follows that the map sending $x^\ast$ to $x \calP_{\ell-1}$ gives an isomorphism from $V^\ast$ to $V$.
    Interchanging the role of $B_1$ and $B_2$, we obtain its inverse, sending $x \in V$ to $x^\ast \calP_{q-1}^\ast(x^\ast)\in V^\ast$.
\end{proof}


\subsubsection{Cusp shape}
Let $m$ and $l$ be a meridian and a canonical longitude of $K(p,q)$, viewed as commuting elements in $G$ and let $\rho_{\text{geom}}:G\to \SL_2(\C)$ be a lift of the geometric representation, provided that $K(p,q)$ is hyperbolic. Up to conjugation, one can assume that 
$$\rho_{\text{geom}}(m)=\begin{pmatrix} 1 & 1 \\ 0 & 1 \end{pmatrix},\quad \rho_{\text{geom}}(l)=\begin{pmatrix} -1 & c \\ 0 & -1 \end{pmatrix}.$$
The element $c$ is called the cusp shape of $K(p,q)$: it is a geometric invariant of the knot, controlling the Euclidean structure at infinity.
By analogy, one can define the cusp shape associated to the tautological representation $\rho:G\to \SL_2(V)$. 
As the blackboard-framed longitude is given by $w^\ast w$ (recall the proof of Theroem~\ref{toradj}), it follows from Equations~\eqref{eqn.w} and \eqref{eqn.wstar} that
$$\rho(m)=\begin{pmatrix} 1 & x \\ 0 & 1\end{pmatrix},\quad \rho(l)=\begin{pmatrix}-1 & 2\iota \calQ_{p-2} - 2x \sum_{k=1}^{p-1} \epsilon_k\\ 0 & -1\end{pmatrix}.$$
Here the sum of the epsilons in $\rho(l)$ arises from the writhe correction.
We derive 
\begin{equation}
\label{eqn.cusp}
   c=\frac{2 \iota \calQ_{p-2}}{x} -2 \sum_{k=1}^{p-1} \epsilon_k \in V
\end{equation} which show a tight relation between $\calQ_{p-2}$ and the cusp shape of $K(p,q)$. Computing the same element for $K(p,\ell)$ yields 
$$\frac{\calQ_{p-2}(x)}{x}=\frac{\calQ^\ast_{p-2}(x^\ast)}{x^\ast}$$
where the isomorphism between $V$ and $V^\ast$ is used implicitly.
Note that the sum of the epsilons in~\eqref{eqn.cusp} does not change when we replace $q$ by $\ell$, since they are inverses of each other.
Substituting $x^\ast=x\calP_{\ell-1}(x)$ yields the following identity:
$$\calQ^\ast_{p-2}(x^\ast)=\calQ_{p-2}(x)\calP_{\ell-1}(x)\in V.$$
We verified this identity by computer for many values of $p$ and $q$.

\subsubsection{Invariance of the adjoint torsion}
Let us comment the corresponding equation for the adjoint torsion $\tau_2$. Anticipating sligthly the next section, define $\Omega$ so that we can write $$\tau_2=\frac{\Omega}{4x^2 \calP_{\ell-1}^2}=\frac{\Omega^\ast}{4(x^\ast)^2(\calP^\ast_{q-1})^2}$$
Here the second equality uses the isomorphism $V\simeq V^\ast$ implicitly, with the fact that $\ell^\ast=q$. Substituting $x=x^\ast \calP^\ast_{q-1}$, the reciprocal of the relation $x^\ast=x \calP_{\ell-1}$, we deduce that
\begin{equation}
    \label{eqn.Omega}
    \Omega^\ast(x^\ast)=\Omega(x)\calP_{\ell-1}(x)^{-2}
\end{equation}
This shows that the element $\Omega$ transforms non trivially under the isomorphism between $V$ and $V^\ast$, but in a similar way to $\calQ_{p-2}$.

\subsection{Reidemeister torsions in Frobenius algebra}
\label{sec.frob}

It is shown in \cite{S2P} that $V=\Q[x]/(P_{p-1})$ has a natural semi-simple Frobenius algebra structure, meaning that it is endowed with a linear form $\epsilon:V\to \Q$, a bilinear symmetric non-degenerate form $\eta:V\times V\to \Q$, and an invertible element $\Omega\in V$ related by the following equations:
\begin{equation}
    \label{eqn.str}
    \eta(x,y)=\epsilon(xy),\quad \epsilon(x)=\Tr_{V/\Q}(\Omega^{-1}x),\quad \Omega=\sum_{k} \frac{e_k^2}{\eta(e_k,e_k)}
\end{equation}
where $x$ and $y$ are any elements of $V$, and $e_k$ is an orthogonal basis for $\eta$.
In our case, the natural basis $\calP_0,\calP_1,\ldots, \calP_{p-2}$ of $V$ is orthogonal and satisfies $\epsilon(\calP_k)=0$ for $k>0$ and $\epsilon(\calP_0)=1$. Moreover, $\eta(\calP_k,\calP_k)=(-1)^k \epsilon_{k+1}$ so that 
$$\Omega=\sum_{k=0}^{p-2} (-1)^k \epsilon_{k+1} \calP_k^2.$$
Let us write $\Omega=\Omega^+-\Omega^-$ by decomposing the sum over even and odd indexes of the epsilons. Using Lemmas~\ref{lem.e1} and~\ref{lem.e2}, we have
$$\Omega^+=\sum_{k=1}^{(p-1)/2} -\epsilon_{2k} \calP_{2k-1}^2=\sum_{k=1}^{(p-1)/2}-\epsilon_{2k} \calP_{p-2k-1}^2=-\sum_{k=0}^{(p-3)/2}\epsilon_{p-2k-1} \calP_{2k}^2.$$
As $\epsilon_{p-k} = \epsilon_k$, it follows that $\Omega^+ = -\Omega^-$, and hence $\Omega^+ = \tfrac{1}{2}\Omega$.  
By Theorem~\ref{toradj}, we obtain
\begin{equation}\label{toromega}
    \tau_2 = \frac{\Omega^+}{2x^2 \calP_{\ell-1}^2}
           = \frac{\Omega}{4x^2 \calP_{\ell-1}^2}.
\end{equation}

This relation between $\tau_2$ and $\Omega$ implies that the inverse sum of $\tau_2$ vanishes; see Theorem~\ref{thm.invtau2} below. Note that a similar equation for non-parabolic representations was studied in \cite{Yoon}. See \cite{GKY} for the quantum-physical motivation behind such inverse sums of adjoint torsions.

\begin{Theorem}
\label{thm.invtau2}
    Fix coprime odd integers $p$ and $q$ satisfying $ 1 <q <p $, that is, fix a hyperbolic knot-bridge knot $K(p,q)$ and let $\calX$ be its parabolic character variety. Then
    \begin{equation*}
        \sum_{\rho \in \calX} \frac{1}{\tau_2(\rho)} = 0 \,.
    \end{equation*}
\end{Theorem}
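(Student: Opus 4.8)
The plan is to combine the closed formula $\tau_2 = \Omega/(4x^2\calP_{\ell-1}^2)$ from Equation~\eqref{toromega} with the trace-form description of $\epsilon$ to reduce the claimed identity to a statement about traces in the Frobenius algebra $V$. First I would note that $\sum_{\rho\in\calX}1/\tau_2(\rho)$ is, after passing to the double cover $\hat\calX$, essentially $\tfrac12\Tr_{V/\Q}(1/\tau_2)$, since $V$ is the quadratic extension of $V^+=\Q[\calX]$ and the nontrivial element of the Galois group $\iota\mapsto-\iota$ fixes $1/\tau_2$ (this element lies in $V^+$, being a conjugation-invariant torsion), so the trace over $V$ is twice the trace over $V^+$, i.e.\ twice the sum over $\calX$. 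Thus it suffices to show $\Tr_{V/\Q}\!\big(4x^2\calP_{\ell-1}^2/\Omega\big)=0$.

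Next I would invoke the Frobenius-algebra identity $\epsilon(y)=\Tr_{V/\Q}(\Omega^{-1}y)$ from \eqref{eqn.str}, which turns the target into $\epsilon\big(4x^2\calP_{\ell-1}^2\big)=0$. Now I would use the reciprocity built into Proposition~\ref{thm.dual}: under the isomorphism $V^*\to V$ sending $x^*\mapsto x\calP_{\ell-1}$, one has $x^*{}^2 = x^2\calP_{\ell-1}^2$, so the quantity $4x^2\calP_{\ell-1}^2$ is exactly the image of $4(x^*)^2\in V^*$. Hence $\epsilon\big(4x^2\calP_{\ell-1}^2\big)=\epsilon^*\big(4(x^*)^2\big)$ up to the identification of the two Frobenius structures — but one must be careful here, since the reciprocity formula $x\Omega=x^*\Omega^*$ shows the Frobenius forms $\epsilon$ and $\epsilon^*$ are \emph{not} intertwined by the algebra isomorphism. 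The clean way around this is to stay entirely in $(V,\epsilon)$ and compute $\epsilon(x^2)$ directly: since $x=x^*\calP^*_{q-1}$ and $\tau_1 = 2/(xx^*)$, we have $x^2 = 4/(xx^*\tau_1)^2\cdot(x^*)^2\cdots$ — so instead I would aim to show the cleaner statement that $\epsilon(x^2 R)=0$ for any $R\in V^+$ coming from the reciprocal knot structure, which is really the assertion that $x^2$ times such an element has no $\calP_0$-component.

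Concretely, the heart of the argument is: expand $4x^2\calP_{\ell-1}^2$ (equivalently $4(x^*)^2$, viewed inside $V$) in the orthogonal basis $\calP_0,\dots,\calP_{p-2}$ and show the coefficient of $\calP_0$ vanishes, since $\epsilon$ reads off precisely that coefficient. Because $\tau_2$ is a topological invariant, the formula $\tau_2=\Omega^*/(4(x^*)^2(\calP^*_{q-1})^2)$ is also valid; multiplying, $1/\tau_2 = 4x^2\calP_{\ell-1}^2/\Omega$, and Equation~\eqref{eqn.Omega} ($\Omega^*(x^*)=\Omega(x)\calP_{\ell-1}(x)^{-2}$) is the compatibility that makes these match. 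So the statement $\epsilon(1/\tau_2\cdot\Omega)=\epsilon(4x^2\calP_{\ell-1}^2)=0$ should follow from a parity/symmetry property of the polynomial $4X^2P_{\ell-1}^2 \bmod P_{p-1}$: using Lemma~\ref{lem.e1} ($\calP_{k+p}=(-1)^k\iota\calP_k$) and Lemma~\ref{lem.e2} ($\calP_{-2-k}=\calP_k$), one shows that multiplication by $x^2$ shifts and symmetrizes indices in a way that produces only $\calP_{2k}$ with $k\ge 1$ plus a possible $\calP_{p-1}=0$ term, never a bare $\calP_0$ — this is exactly analogous to the index bookkeeping already carried out in the proof of Theorem~\ref{thm.simp}.

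The main obstacle I expect is the last step: controlling the $\calP_0$-coefficient of $4X^2P_{\ell-1}(X)^2$ modulo $P_{p-1}(X)$. One has $x^2 = (\calP_2-\calP_0)/\epsilon_1\cdot(\text{something})$ type relations from the recursion $\calP_2=\epsilon_1\epsilon_2 x^2\calP_0+\cdots$, so $\epsilon(x^2 y)$ is a difference of two coefficients of $y$, and one must show these two coefficients of $\calP_{\ell-1}^2$ agree; here the fact that $\calP_{\ell-1}^2$ is (up to the isomorphism) the image of the identity-like element $1$ from the reciprocal side — more precisely that $x\calP_{\ell-1}=x^*$ satisfies its own Riley-type relation in $V$ — should force the needed symmetry. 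The subtlety requiring care throughout is the sign/orientation ambiguity in $\tau_2$ and the precise relation between the sum over $\calX$ and the trace over $V$ versus $V^+$; getting the factor of $2$ and the Galois-invariance right is essential but routine once set up, whereas the vanishing of the $\calP_0$-coefficient is the genuine content, and I would model its proof closely on the continuant manipulations in Lemmas~\ref{omega} and~\ref{sumoftwo}.
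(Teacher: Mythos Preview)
Your reduction is exactly right and matches the paper: pass to $V=\Q[\hat\calX]$ so that the sum over $\calX$ becomes $\tfrac12\Tr_{V/\Q}(1/\tau_2)$, plug in $1/\tau_2=4x^2\calP_{\ell-1}^2\Omega^{-1}$ from \eqref{toromega}, and use $\epsilon(y)=\Tr_{V/\Q}(\Omega^{-1}y)$ from \eqref{eqn.str} to reduce to $\epsilon\big(4x^2\calP_{\ell-1}^2\big)=0$. Up to this point your proposal and the paper are identical.

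Where you diverge is the endgame, and here you are making it much harder than it needs to be. The detour through the reciprocal algebra $V^*$ and the attempt to track the $\calP_0$-coefficient of $X^2P_{\ell-1}^2\bmod P_{p-1}$ via continuant manipulations \`a la Lemmas~\ref{omega} and~\ref{sumoftwo} is unnecessary, and you yourself flag it as the ``main obstacle'' without a concrete plan. The paper's move is a one-liner you have overlooked: from the defining recursion $\calP_\ell=\epsilon_\ell x\calP_{\ell-1}+\calP_{\ell-2}$ one has $x\calP_{\ell-1}=\epsilon_\ell(\calP_\ell-\calP_{\ell-2})$, so
\[
\epsilon\big(4x^2\calP_{\ell-1}^2\big)=4\,\eta\big(\calP_\ell-\calP_{\ell-2},\,\calP_\ell-\calP_{\ell-2}\big)
=4(-1)^\ell\big(\epsilon_{\ell+1}+\epsilon_{\ell-1}\big)
\]
by orthogonality of the $\calP_k$. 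Since $q\ell\equiv\pm1\pmod p$, exactly one of $q(\ell+1),q(\ell-1)$ crosses a multiple of $p$, so $\epsilon_{\ell+1}+\epsilon_{\ell-1}=0$. That is the entire computation; no reciprocity, no index bookkeeping on $\calP_{\ell-1}^2$, no appeal to Lemmas~\ref{omega} or~\ref{sumoftwo} is needed. Your proposal is not wrong in spirit, but the missing idea is precisely this use of the recursion to rewrite $x\calP_{\ell-1}$ as a difference of basis vectors before squaring, which collapses the problem to a two-term orthogonality check.
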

\begin{proof}
    For any $f\in \Q[\calX]$, we have 
    \begin{equation*}
    \sum_{\rho\in \calX}f(\rho)=\Tr_{\Q[\calX]/\Q}(f)=\frac{1}{2}\Tr_{\Q[\hat{\calX}]/\Q}(f) \,.    
    \end{equation*}
    Therefore, it suffices to compute the trace of $1/\tau_2$ in $V=\Q[\hat \calX]$ over $\mathbb{Q}$. From Equations~\eqref{eqn.str} and \eqref{toromega}, one has
\begin{align*}
    \Tr_{V/\Q}(1/\tau_2) & = \Tr_{V/\Q} (4x^2 \calP_{\ell-1}^2 / \Omega) \\
     &= \epsilon (4x^2 \calP_{\ell-1}^2) \\
     & = 4\eta (\epsilon_\ell x \calP_{\ell-1},\epsilon_\ell x \calP_{\ell-1}) \\
     & = 4\eta  (\calP_{\ell} - \calP_{\ell-2}, \calP_{\ell} - \calP_{\ell-2})
\end{align*}
As $\calP_{k}$ is orthogonal with $\eta(\calP_k, \calP_k) = (-1)^k\epsilon_{k+1}$, one has
$$\eta  (\calP_{\ell} - \calP_{\ell-2}, \calP_{\ell} - \calP_{\ell-2})=(-1)^{\ell} ( \epsilon_{\ell+1} + \epsilon_{\ell-1})=0.$$ The last equality follows from the fact that $q \ell \equiv \pm1 \pmod{p}$, implying that $\epsilon_{\ell+1}$ and $\epsilon_{\ell-1}$ have different signs. This completes the proof.
\end{proof}

The above theorem motivates us to consider the inverse sum of $\tau_1$. Although we have no quantum-physical explanation for its simplicity—specifically, for its independence of $p$ and $q$—Theorem~\ref{thm.invtau1} below shows that the inverse sum of $\tau_1$ is $\pm 1$.

\begin{Theorem}
\label{thm.invtau1}
    Fix coprime odd integers $p$ and $q$ satisfying $1 <q <p $, that is, fix a hyperbolic knot-bridge knot $K(p,q)$ and let $\calX$ be its parabolic character variety. Then 
    \begin{equation*}
        \sum_{\rho \in \calX} \frac{1}{\tau_1(\rho)} = \epsilon_{(\ell'-1)/2}
    \end{equation*}
    where $0<\ell'<2p$ is the unique odd integer satisfying $\ell'q \equiv -1$ modulo $2p$.
\end{Theorem}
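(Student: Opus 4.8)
The plan is to turn the sum over the character variety into a trace inside the Frobenius algebra $V$, reduce that trace to a clean continuant expression, and then run a cancellation argument modeled on the proof of Theorem~\ref{thm.simp}.

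\emph{Reduction to a trace.} Just as at the start of the proof of Theorem~\ref{thm.invtau2}, for $f\in\Q[\calX]$ one has $\sum_{\rho\in\calX}f(\rho)=\Tr_{\Q[\calX]/\Q}(f)=\tfrac12\Tr_{V/\Q}(f)$, and by Theorem~\ref{thm.simp} the element $1/\tau_1=\tfrac{x^2\calP_{\ell-1}}{2}$ lies in $\Q[\calX]=V^+$. Hence $\sum_{\rho\in\calX}\tfrac1{\tau_1(\rho)}=\tfrac14\Tr_{V/\Q}(x^2\calP_{\ell-1})$, and using the Frobenius identity $\epsilon(y)=\Tr_{V/\Q}(\Omega^{-1}y)$ of \eqref{eqn.str}, equivalently $\Tr_{V/\Q}(z)=\epsilon(\Omega z)$, the statement becomes $\epsilon\big(\Omega\, x^2\calP_{\ell-1}\big)=4\,\epsilon_{(\ell'-1)/2}$.

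\emph{A normal form for $x^2\calP_{\ell-1}$.} First I would record the identity $x^2\calP_{\ell-1}=\calP_{\ell'-3}-\calP_{\ell'+1}$ in $V$. Indeed $\calP_{\ell-1}=\calP_{\ell'-1}$ by Lemmas~\ref{lem.e1} and~\ref{lem.e2}, while $q\ell'\equiv-1\pmod{2p}$ forces $\lfloor q\ell'/p\rfloor$ to be odd, so $\epsilon_{\ell'}=-1$, and likewise $\epsilon_{\ell'+1}=\epsilon_1=1$, $\epsilon_{\ell'-1}=-\epsilon_1=-1$ (the relations $\epsilon_{\ell'\pm k}=\pm\epsilon_k$ already used in Lemmas~\ref{omega} and~\ref{sumoftwo}). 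Feeding these into $\calP_k=\epsilon_kx\calP_{k-1}+\calP_{k-2}$ gives $\calP_{\ell'+1}-\calP_{\ell'-3}=x(\calP_{\ell'}-\calP_{\ell'-2})=-x^2\calP_{\ell'-1}$, which is the claimed identity. Combined with $\Omega=\sum_{k=0}^{p-2}(-1)^k\epsilon_{k+1}\calP_k^2$, the goal becomes
$$\sum_{k=0}^{p-2}(-1)^k\epsilon_{k+1}\Big(\epsilon(\calP_k^2\calP_{\ell'-3})-\epsilon(\calP_k^2\calP_{\ell'+1})\Big)=4\,\epsilon_{(\ell'-1)/2}.$$

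\emph{Expanding the triple products.} Since the $\calP_j$ are $\eta$-orthogonal with $\eta(\calP_j,\calP_j)=(-1)^j\epsilon_{j+1}$ and $\epsilon$ extracts the coefficient of $\calP_0$, each term equals $\epsilon(\calP_k\cdot\calP_k\calP_m)=(-1)^k\epsilon_{k+1}\cdot[\calP_k\calP_m]_k$ for $m\in\{\ell'-3,\ell'+1\}$, where $[\cdot]_k$ is the coefficient of $\calP_k$. I would compute $\calP_k\calP_m$ from the continuant product identity in Lemma~\ref{lem.K}, reducing all indices to $[0,p-2]$ by the periodicity Lemmas~\ref{lem.e1},~\ref{lem.e2} and the reflection $\calP_{p-2}\calP_k=(-1)^k\calP_{p-2-k}$; equivalently one may invoke the fact that $\epsilon(\calP_a\calP_b\calP_c)=\omega(e_a,e_b,e_c)\in\{-1,0,1\}$, whose support and sign come from the fusion rules of the signed Verlinde algebra \cite{DM,S2P}. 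Either way the left-hand side becomes a finite alternating sum of signs $\pm1$ over an explicit range of $k$ determined by the residue of $\ell'$ modulo $p$.

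\emph{The cancellation, and the main obstacle.} Finally this alternating sum telescopes: consecutive terms cancel, and after separating the cases $\ell'<p$ and $\ell'>p$ exactly as in the bulleted argument at the end of the proof of Theorem~\ref{thm.simp}, only a bounded number of boundary contributions survive, and they add up to $4\,\epsilon_{(\ell'-1)/2}$ (the overall sign being pinned down once the homological orientation fixing the sign of $\tau_1$ is chosen); dividing by $4$ gives the theorem. The genuinely hard part will be this last bookkeeping — tracking admissibility ranges and $\epsilon$-signs carefully enough that the near-total cancellation is transparent and the surviving term is identified as $\epsilon_{(\ell'-1)/2}$. The small cases $K(5,3)$ (where $x^2\calP_{\ell-1}=\calP_0$, trace $p-1=4$, answer $\epsilon_1=1$) and $K(7,3)$ (where $x^2\calP_{\ell-1}=-\calP_2$, answer $\epsilon_4=-1$) already exhibit the expected shape.
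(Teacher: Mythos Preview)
Your plan is essentially the paper's own argument, but with a detour that you can (and should) drop. The identity $\Tr_{V/\Q}(z)=\epsilon(\Omega z)$ together with your expansion $\Omega=\sum_k(-1)^k\epsilon_{k+1}\calP_k^2$ gives
\[
\epsilon(\Omega z)=\sum_k(-1)^k\epsilon_{k+1}\,\epsilon(\calP_k^2 z)=\sum_k \eta(\calP_k,\calP_k)\,\eta(\calP_k,\calP_k z),
\]
which is precisely the trace formula in the orthogonal basis. So passing through $\epsilon(\Omega\,\cdot)$ buys nothing: after the detour you are back to computing $\Tr_{V/\Q}(x^2\calP_{\ell'-1})$ by expanding multiplication by $x^2\calP_{\ell'-1}$ on the basis $\calP_0,\ldots,\calP_{p-2}$, exactly as the paper does.

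Where the two write-ups genuinely diverge is in the normal form for the multiplier. You go one step further, to $x^2\calP_{\ell'-1}=\calP_{\ell'-3}-\calP_{\ell'+1}$, and then propose computing $\calP_k\calP_{\ell'\pm}$ either via the continuant identity of Lemma~\ref{lem.K} or via the fusion rules from~\cite{DM,S2P}. The paper instead stops at $x^2\calP_{\ell'-1}=x(\calP_{\ell'-2}-\calP_{\ell'})$ and applies Lemma~\ref{sumoftwo} directly, which gives
\[
(\calP_{\ell'}-\calP_{\ell'-2})\,x\calP_k=\epsilon_{k+1}\big(-\calP_{\ell'+k+1}+\calP_{\ell'-k-3}+\calP_{\ell'+k-1}-\calP_{\ell'-k-1}\big)
\]
in one stroke. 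This is exactly the ``telescoping'' you anticipate, already packaged: only four $\calP$'s appear, and one only needs to ask for which $k$ one of the indices $\ell'+k+1,\ \ell'-k-3,\ \ell'+k-1,\ \ell'-k-1$ hits $k$ or $2p-2-k$ modulo $2p$. The paper's case analysis (four bullet points, each producing a single $k$) then gives $\Tr_{V/\Q}(M)=2\epsilon_{(\ell'-1)/2}-2\epsilon_{(\ell'+1)/2}=4\epsilon_{(\ell'-1)/2}$. Your route via $\calP_k\calP_{\ell'-3}$ and $\calP_k\calP_{\ell'+1}$ would require a product formula for a single $\calP_m$ (rather than the difference $\calP_{\ell'}-\calP_{\ell'-2}$), which neither Lemma~\ref{lem.K} nor the paper supplies in closed form; the fusion-rule alternative imports machinery from outside the paper. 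So while your plan is correct in spirit and your small-case checks are right, the cleanest execution is to stop at $x(\calP_{\ell'-2}-\calP_{\ell'})$ and invoke Lemma~\ref{sumoftwo}, after which the ``hard bookkeeping'' you flag reduces to the four congruence checks the paper carries out.
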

\begin{proof}
    It follows from Theorem~\ref{thm.simp} that  
    $$\Tr_{V/\Q}(1/\tau_1)=\frac{1}{2}\Tr_{V/\Q}(x^2 \calP_{\ell'-1})=\frac{1}{2}\Tr_{V/\Q}(x(\calP_{\ell'-2}-\calP_{\ell'})).$$ Recall that $\calP_0,\ldots,\calP_{p-2}$ form an orthogonal basis of $V$, and for $i$ and $j\in \Z/2p\Z$, $\eta(\calP_i,\calP_j)=0$ unless $i \equiv j$ or $i+j \equiv 2p-2$, in which case this scalar product gives $(-1)^i \epsilon_{i+1}$. 
    The trace of an endomorphism $M \in \text{End}(V)$ is given by $$\Tr_{V/\Q}(M)=\sum_{k=0}^{p-2}\eta(\calP_k, M\calP_k)\eta(\calP_k,\calP_k).$$ If we let $M$ be the multiplication by $x(\calP_{\ell'-2}-\calP_{\ell'})$, then  for $0\leq k \leq p-2$, 
    \begin{align*}
    M\calP_k&=(\calP_{\ell'-2}-\calP_{\ell'}) x\calP_k\\
    &=(\calP_{\ell'-2}-\calP_{\ell'})\epsilon_{k+1}(\calP_{k+1}-\calP_{k-1})\\
    &=\epsilon_{k+1}(-\calP_{\ell'+k+1}+\calP_{\ell'-k-3}+\calP_{\ell'+k-1}-\calP_{\ell'-k-1}).
    \end{align*}
    Here the last equality follows from Lemma~\ref{sumoftwo}.
    Therefore, one has
    \begin{align*}
        \Tr_{V/\Q}(M)&=\sum_{k=0}^{p-2}\epsilon_{k+1}\,\eta(\calP_k,\,-\calP_{\ell'+k+1}+\calP_{\ell'-k-3}+\calP_{\ell'+k-1}-\calP_{\ell'-k-1})\,\eta(\calP_k,\calP_k)\\
        &=\sum_{k=0}^{p-2}(-1)^k\,\eta(\calP_k,\,-\calP_{\ell'+k+1}+\calP_{\ell'-k-3}+\calP_{\ell'+k-1}-\calP_{\ell'-k-1}) \, .
    \end{align*}
    Expanding the bilinear form $\eta$ in the sum, we obtain four terms: $\eta(\calP_k, -\calP_{\ell'+k+1})$, $\eta(\calP_k,\calP_{\ell'-k-3})$, $\eta(\calP_k,\calP_{\ell'+k-1})$, and $\eta(\calP_k,\calP_{\ell'-k-1})$.
    We analyze the non-trivial contributions from each term individually. Recall that $\eta(\calP_k,\calP_i) \neq 0$ if and only if $k \equiv i$ or $k+i \equiv 2p-2 \pmod{2p}$.
    \begin{itemize}[leftmargin=*]
    \item First term: if $k \equiv \ell'+k+2 \pmod{2p}$, then $\ell'=-1$ which we excluded. The only possibility is to have $k+\ell'+k+1 \equiv 2p-2 \pmod{2p}$ which appears only for $k=(2p-\ell'-3)/2$. This contributes to the trace $-\epsilon_{k+1}=-\epsilon_{p-(\ell'+1)/2}=-\epsilon_{(\ell'+1)/2}$.
    \item Second term: if $k+\ell'-k-3\equiv 2p-2 \pmod{2p}$, then $\ell'$ is congruent to $1$ modulo $p$, which is impossible. If $k \equiv \ell'-k-3 \pmod{2p}$, then $k=(\ell'-3)/2$. This contributes $\epsilon_{(\ell'-1)/2}$.
    \item Third term: the case $k \equiv \ell'+k-1 \pmod{2p}$ is impossible. The other case  $k+\ell'+k-1\equiv 2p-2 \pmod{2p}$ happens when $k=p-(\ell'+1)/2$ and contributes $\epsilon_{p-(\ell'-1)/2}=\epsilon_{(\ell'-1)/2}$.
    \item  Fourth term: the case $k+\ell'-k-1 \equiv -2 \pmod{2p}$ is excluded. It remains the case $k \equiv \ell'-k-1 \pmod{2p}$ which happens when $k=(\ell'-1)/2$ and contributes $-\epsilon_{(\ell'+1)/2}$. 
    \end{itemize}
    Collecting the non-trivial terms, we conclude that the trace of $M$ is equal to $\epsilon_{(\ell'-1)/2}-2\epsilon_{(\ell'+1)/2}$. If we write $q\ell'=2ps-1$ for some integer $s$, it follows that $\epsilon_{(\ell'+1)/2}=(-1)^s$ and $\epsilon_{(\ell'-1)/2}=(-1)^{s+1}$, giving $\Tr_{V/\Q}(M)=4(-1)^{s+1}$. Combining all the above, one has
    \begin{equation*}
        \sum_{\rho \in \calX} \frac{1}{\tau_1(\rho)} = \frac{1}{2} \Tr_{V/\Q}(1/\tau_1) =
        \frac{1}{4}\Tr_{V/\Q}(x(\calP_{\ell'-2}-\calP_{\ell'})) = (-1)^{s+1} \,.
    \end{equation*} 
    This completes the proof.
\end{proof}

\begin{Remark}
For $q=1$, we have $\ell'=2p-1$ and $\calP_{\ell'-1}=\calP_{-2}=1$. In this case, a direct computation shows that $\Tr_{V/\Q}(1/\tau_1)=\Tr_{V/\Q}(x^2)/2=2-p$.
\end{Remark}

\section{Asymptotics of the signature function}\label{section:asympto}

\subsection{Statement of the theorem}

Let $\Gamma(2)$ be the principal congruence subgroup of level 2 in $\SL_2(\Z)$. Fix $M \in \Gamma(2)$ and define the sequence $(p_n,q_n)$ for odd $n$ by
\begin{equation}
\label{eqn.pnqn}
\begin{pmatrix}
    q_n\\
    p_n 
\end{pmatrix} = M 
\begin{pmatrix}
    1\\
    n
\end{pmatrix}  \,.
\end{equation}
Writing the matrix $M$ as
\begin{equation*}
    M=\begin{pmatrix} a & b \\ c & d \end{pmatrix}\in\Gamma(2) \,,
\end{equation*}
we have odd integers $a,d$ and even integers $b,c$ with $q_n = a+bn$ and $p_n = c+dn$.
As we want to study the asymptotics of signatures along this sequence $(p_n,q_n)$, we further impose that  $0\le b<d$. Then for $n$ big enough, $p_n$ and $q_n$ are coprime odd integers satisfying $0<q_n<p_n$.
We observe that $\lim_n\frac{q_n}{p_n}=\frac{b}{d}$ is always the quotient of an even number by an odd one. Moreover, up to a shift of $n$, the sequence $(p_n,q_n)$ only depends on $b/d$. With such a shift, we can moreover assume $c>0$. We denote by $\Gamma_S(2)$ be the subset of $\Gamma(2)$ consisting of matrices with $0 \leq b <d$ and $c>0$.

To state the asymptotic of the signature $\sigma_g(\frac{q_n}{p_n})$, we construct an explicit Frobenius algebra $W_M$ that depends on $M \in \Gamma_S(2)$ as follows. Set
\begin{equation}
\label{eqn.alpha}
    \alpha_r = 
    \begin{cases}
        \lfloor \frac{rc}{d}\rfloor-\lfloor \frac{(r-1)c}{d}\rfloor & \text{for } 0 < r <d \\[2pt]
        \lfloor \frac{rc}{d}\rfloor-\lfloor \frac{(r-1)c}{d}\rfloor-1 & \text{for } r= d
    \end{cases} \,.
\end{equation}
Note that $\alpha_r \geq 0$ and  $\alpha_1 + \cdots +\alpha_d=c-1$. We define
$$N(\alpha_r)=i^{\alpha_r}\begin{pmatrix} \alpha_r+1+x & i^{-1}(\alpha_r +x)\\ i^{-1}(\alpha_r +x) & -\alpha_r+1-x\end{pmatrix} \in \mathrm{GL}_2(\Z[i][x])$$
and set 
$$\begin{pmatrix} -iH_1 \\ H_2 \end{pmatrix}=N(\alpha_1)\overline{N(\alpha_2)}N(\alpha_3)\cdots \overline{N(\alpha_{d-1})}N(\alpha_d)\begin{pmatrix} 1 \\0 \end{pmatrix}\,.$$
From the fact that $\alpha_1 + \cdots +\alpha_d = c-1$ is odd, one checks that both $H_1$ and $H_2$ are integer polynomials of degree $d$.
Similarly, we define an integer polynomial $H_3$ of degree $b$ as 
$$\begin{pmatrix}
    H_3 \\ * 
\end{pmatrix}=N(\alpha_1)\overline{N(\alpha_2)}N(\alpha_3)\cdots \overline{N(\alpha_{b})}\begin{pmatrix}1 \\ 0\end{pmatrix}.$$
Provided that
\begin{equation}
\text{$H_1$ has no multiple roots and no common root with $H_3$,}
\tag{H}\label{cond:H}
\end{equation}
we define the semi-simple Frobenius algebra $W_M$ of degree $d$ by
\begin{equation*}
    W_M=\Q[x]/(H_1) \quad \text{with} \quad \Omega_{W_M}=-H_1'H_2 H_3^{-2}\,.
\end{equation*} 

\begin{Theorem}\label{mainsign}
Fix $M \in \Gamma_S(2)$ and let $(p_n,q_n)$ be the sequence defined in~\eqref{eqn.pnqn}. Then for any $g\ge 2$, 
\begin{enumerate}
    \item the signature $\sigma_g(\frac{q_n}{p_n})$ is a polynomial in $n$, and
    \item under the assumption \eqref{cond:H}, its asymptotic satisfies
$$\sigma_g \left(\frac{q_n}{p_n}\right)\underset{n\to\infty}{\sim}2\left(\frac{n^3}{2\pi^2}\right)^{g-1}\zeta(2g-2)\Tr_{W_M/\Q}\left(\Omega_{W_M}^{g-1}\right).$$
\end{enumerate}
\end{Theorem}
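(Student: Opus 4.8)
The plan is to push everything through the reciprocity of Section~\ref{sec.dual}, then linearize the $n$-dependence via the two-variable polynomial of Step~2, and finally run a residue-plus-localization analysis of the resulting power sums over the finite sets $\calC[n]$. Since $ad-bc=1$ gives $q_nd-bp_n=1$, the odd integer $d$ is the reciprocal of $q_n$ modulo $p_n$, so Proposition~\ref{thm.dual} provides an algebra isomorphism $\phi\colon V(p_n,q_n)\xrightarrow{\,\sim\,}V(p_n,d)$. Transporting the torsion identity \eqref{toromega} through $\phi$ exactly as in \eqref{eqn.Omega} yields $\phi(\Omega)=\Omega_{(p_n,d)}\,\calP_{q_n-1}^{-2}$, where from now on $x$, $\calP_k$ and $\iota=\calP_{p_n-2}$ refer to $V(p_n,d)=\Q[x]/(P_{p_n-1})$ built from the signs $(-1)^{\lfloor kd/p_n\rfloor}$; hence, by invariance of the trace,
$$\sigma_g\!\left(\tfrac{q_n}{p_n}\right)=\Tr_{V(p_n,q_n)/\Q}\!\big(\Omega^{g-1}\big)=\Tr_{V(p_n,d)/\Q}\!\left(\frac{\Omega_{(p_n,d)}^{\,g-1}}{\calP_{q_n-1}^{\,2(g-1)}}\right),$$
and the gain is that $d$ is now a fixed constant. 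I would also record the closed form $\Omega_{(p_n,d)}=\pm\,\iota\cdot P_{p_n-1}'(x)$, which follows from a residue computation identifying $\epsilon$ with $f\mapsto\pm\sum_\alpha\iota(\alpha)f(\alpha)/P_{p_n-1}'(\alpha)$ (using $\calP_{p-2}\calP_k=(-1)^k\calP_{p-2-k}$, cf.\ the proof of Theorem~\ref{torfunda}) and the last relation of \eqref{eqn.str}.

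\emph{The curve.} Grouping the product \eqref{eqn.sym} for $k=p_n-1$ according to the value of $\lfloor jd/p_n\rfloor$ splits it into $d$ blocks on which $\epsilon_j=(-1)^{r-1}$ is constant; since $p_n/d=n+c/d$, the $r$-th block has length $m_r=n+\alpha_r$ with $\alpha_r$ as in \eqref{eqn.alpha}, so it equals $\left(\begin{smallmatrix}(-1)^{r-1}x&1\\1&0\end{smallmatrix}\right)^{n+\alpha_r}$. The substitution $x=i(t+t^{-1})$ diagonalizes each block, with eigenvalues $\pm it,\pm it^{-1}$: the continuant entries become $i^{m}\,\frac{t^{m+1}-t^{-m-1}}{t-t^{-1}}$ (and its $(-i)$-twist on even blocks), and writing $t^{m_r}=t^{\alpha_r}\cdot t^{n}$ turns every entry into a Laurent polynomial in $t$ and $t^{n}$. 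This proves $P_{p_n-1}(i(t+t^{-1}))(t-t^{-1})^d=i^{n+1}Q(t,t^n)$ for a fixed $Q$; the same computation on the $(2,1)$-entry and on the partial product of the first $\approx b$ blocks represents $\calP_{p_n-2}$ and $\calP_{q_n-1}$ as fixed rational functions on $\calC=\{Q=0\}$, and differentiating $i^{n+1}Q(t,t^n)$ and using $Q=0$ on $\calC$ shows $P_{p_n-1}'(x)$ is represented on $\calC[n]$ by $Q_u(u,v)+n\,vu^{-1}Q_v(u,v)$ over a fixed factor --- the only place where $n$ occurs explicitly. Altogether
$$\sigma_g\!\left(\tfrac{q_n}{p_n}\right)=\tfrac12\sum_{(u,v)\in\calC[n]}\big(A(u,v)+nB(u,v)\big)^{g-1}C(u,v)$$
for fixed rational functions $A,B,C$ on $\calC$.

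\emph{Polynomiality and asymptotics.} Part~(1) follows by expanding the power: it reduces to showing that $n\mapsto\sum_{(u,v)\in\calC[n]}R(u,v)$ is a polynomial for each fixed rational $R$, which one gets by writing the sum as minus the sum of residues of $R(u,u^n)\,\partial_u\log Q(u,u^n)$ --- the poles lie only at $u=0,\infty$ and at roots of unity on $\calC$ (the zeros of $\calP_{q_n-1}$ on $\calC$ are among the latter), the residues at $0,\infty$ are polynomial in $n$ by expanding a fixed Laurent series, and the residues at roots of unity assemble into a polynomial via the symmetry $(u,v)\mapsto(u^{-1},v^{-1})$ --- the same mechanism underlying the polynomiality of the Verlinde formula. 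For part~(2), when $g\ge2$ the main term localizes at the two singular points $(1,1)$ and $(-1,-1)$ of $\calC$ --- the points where $\left(\begin{smallmatrix}\pm x&1\\1&0\end{smallmatrix}\right)$ is parabolic ($t=\pm1$, $x=\pm2i$) --- since each of the $O(n)$ remaining points of $\calC[n]$, where the transfer matrices are elliptic, contributes $O(n^{g-1})$, hence $O(n^{g})=o(n^{3g-3})$, which is exactly why $g=1$ is excluded. Near $(1,1)$, expanding $i^{n+1}Q$ to first order in $t-1$ and $t^n-1$ (with $t^{m_r}=t^{\alpha_r}t^n$) shows it factors, up to a unit, as $(2(t-1))^d(-i)H_1(x)$ with $x=\tfrac{v-1}{u-1}$; under \eqref{cond:H} this means $\calC$ has exactly $d$ smooth branches through $(1,1)$ of distinct slopes $x_1,\dots,x_d$ --- the roots of $H_1$ --- and the same expansion identifies $\calP_{p_n-2}$ and $\calP_{q_n-1}$ on the $j$-th branch with $H_2(x_j)$ and $H_3(x_j)$, finite and (by \eqref{cond:H}) nonzero. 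Parametrizing $u=e^z$, $v=u^n=e^{nz}$ and imposing $\tfrac{v-1}{u-1}=x_j$ shows the points of $\calC[n]$ on the $j$-th branch are indexed by $k\in\Z$ with $u-1\approx\tfrac{2\pi ik}{n}$; substituting into the formula of Step~2 (the $n\,vu^{-1}Q_v$ term supplies a further $n$, and $(u-1)^2$ supplies $n^{-2}$) gives
$$\frac{\Omega_{(p_n,d)}}{\calP_{q_n-1}^{\,2}}\big(u_{j,k},v_{j,k}\big)\ \sim\ \frac{n^3}{2\pi^2k^2}\cdot\Big(-\frac{H_1'(x_j)H_2(x_j)}{H_3(x_j)^2}\Big)\ =\ \frac{n^3}{2\pi^2k^2}\,\Omega_{W_M}(x_j),$$
and raising to the power $g-1$ and summing over $j=1,\dots,d$ and $k\in\Z\setminus\{0\}$ gives $\big(\tfrac{n^3}{2\pi^2}\big)^{g-1}\zeta(2g-2)\sum_j\Omega_{W_M}(x_j)^{g-1}=\big(\tfrac{n^3}{2\pi^2}\big)^{g-1}\zeta(2g-2)\Tr_{W_M/\Q}(\Omega_{W_M}^{g-1})$, with $(-1,-1)$ contributing an equal amount to supply the factor~$2$.

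\emph{Main obstacle.} The delicate part is the analytic core of the last step: proving uniform estimates that $\calC[n]$ outside shrinking neighbourhoods of $(\pm1,\pm1)$ --- together with the finitely many points where $\calC$ runs off to $\{v=0\}\cup\{v=\infty\}$ --- contributes $o(n^{3g-3})$; controlling the error in the parametrization $u_{j,k}$ uniformly enough to take the $(g-1)$-st power and sum over $k$, where $g\ge2$ again enters to make $\sum_k k^{-2(g-1)}$ converge and the tail negligible; and carrying out the $N(\alpha_r)$-bookkeeping of Step~2 precisely enough --- in particular tracking the powers of $i$ and $2$ --- to certify that the leading local data are exactly $H_1,H_2,H_3$. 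Condition \eqref{cond:H} is indispensable precisely here: $H_1$ separable makes $W_M$ semisimple and keeps the $d$ branches apart, while $H_1,H_3$ coprime prevents a branch from colliding with a zero of $\calP_{q_n-1}$, so that $\Omega_{W_M}=-H_1'H_2H_3^{-2}$ is a genuine element of $W_M$.
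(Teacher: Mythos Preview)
Your overall architecture matches the paper's: pass to the reciprocal pair $(p_n,d)$, use the block decomposition and the substitution $x=i(t+t^{-1})$ to produce a fixed two-variable polynomial $Q_M$ (together with $R_M,S_M$ for $\calP_{p_n-2}$ and $\calP_{q_n-1}$), rewrite the signature as a power sum over $\calC[n]$, and localize at the corners $\{\pm1\}^2$ where the branch slopes are the roots of $H_1$ and the local data are $H_2,H_3$. The asymptotic half of your sketch is essentially the paper's argument; the only slip is bookkeeping of the corners: the points of $\calC[n]$ with $u\to 1$ accumulate at \emph{both} $(1,1)$ and $(1,-1)$ (according to the parity of $k$ in $u\approx e^{i\pi k/n}$), and the $u\mapsto -u$ parity then doubles this. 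Your ``$(1,1)$ and $(-1,-1)$'' is not the right list, though your final count happens to come out correctly because you sum over all $k\ne 0$ and then double.

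There is, however, a genuine gap in your proof of part~(1). After expanding the power you want $n\mapsto\sum_{\calC[n]}R(u,v)$ to be polynomial for a fixed \emph{rational} $R$, but your $R$ carries the factor $S_M(u,v)^{-2}$ coming from $\calP_{q_n-1}^{-2}$. The identity $\sum_{\calC[n]}R=-\sum\Res\,R(u,u^n)\partial_u\log Q(u,u^n)$ you invoke is only valid when $R(u,u^n)$ has no poles away from the zeros of $Q(u,u^n)$; here it has poles at the zeros of $S_M(u,u^n)$, and your parenthetical claim that these lie ``among roots of unity on $\calC$'' is neither justified nor generally true, nor would it by itself make those residues polynomial in $n$. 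The paper resolves this not by controlling those extra residues but by \emph{eliminating} them: reciprocity gives the polynomial identity $P_{q_n-1}(x)^{-1}=P^*_{d-1}\!\big(xP_{q_n-1}(x)\big)$ in $V_n$, so one replaces $\calP_{q_n-1}^{-2(g-1)}$ by the fixed-degree polynomial $P^*_{d-1}(xP_{q_n-1}(x))^{2(g-1)}$. The trace then becomes a single residue at $t=0$ of a Laurent polynomial in $(t,t^n)$ divided by $Q_M(t,t^n)(t-t^{-1})^m$, and a direct power-series expansion (the paper's Lemma~\ref{lem.RS}, using that the lowest $V$-coefficient of $Q_M$ is a power of $U-U^{-1}$) shows this residue is polynomial in $n$. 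Without this replacement trick your polynomiality argument does not go through.
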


\subsection{Reciprocal Frobenius algebra}

Fix $M \in \Gamma_S(2)$ and $(p_n,q_n)$ as in Theorem~\ref{mainsign}. As $d q_n-b p_n=1$, $d$ is the inverse of $q_n$ modulo $p_n$. 
Let us denote by $V^*_n$ the Frobenius algebra associated to $(p_n,q_n)$ and $V_n$ the one associated to $(p_n,d)$. Here we adopt the notations from Section~\ref{sec.top}, except that the subscript $n$ is added, and we attach $\ast$ to the algebra associated with $(p_n, q_n)$. Recall from  Section~\ref{sec.dual} that they are isomorphic as algebras and denoting by $x^*,x$ their generators, one has the relations: 
$$x^*=x\calP_{q_n-1},\quad x=x^*\calP^*_{d-1}$$
together with (see Equation~\eqref{eqn.Omega})
$$\Omega^*_n=\Omega_n {\calP}_{q_n-1}^{-2}\,.$$
It follows that  
$$\sigma_g\left(\frac{q_n}{p_n}\right)=\Tr_{V^*_n/\Q} \left((\Omega^*_n)^{g-1}\right)=\Tr_{V_n/\Q} \left((\Omega_n \calP_{q_n-1}^{-2})^{g-1}\right).$$


\begin{Proposition}\label{description}
One has
$$\begin{pmatrix}P_{p_n-1}\\P_{p_n-2}\end{pmatrix}=\begin{pmatrix}(-1)^{d+1} X& 1 \\ 1 & 0\end{pmatrix}^{n+\alpha_d}\cdots\begin{pmatrix}-X& 1 \\ 1 & 0\end{pmatrix}^{n+\alpha_2}\begin{pmatrix} X& 1 \\ 1 & 0\end{pmatrix}^{n+\alpha_1}\begin{pmatrix} 1 \\ 0\end{pmatrix}$$
and
$$\begin{pmatrix}P_{q_n-1}\\P_{q_n-2}\end{pmatrix}=\begin{pmatrix}(-1)^{b+1} X& 1 \\ 1 & 0\end{pmatrix}^{n+\alpha_b}\cdots\begin{pmatrix}-X& 1 \\ 1 & 0\end{pmatrix}^{n+\alpha_2}\begin{pmatrix}X& 1 \\ 1 & 0\end{pmatrix}^{n+\alpha_1}\begin{pmatrix} 1 \\ 0\end{pmatrix}.$$
Here the integers $\alpha_r$, defined in~\eqref{eqn.alpha}, are independent of $n$.
\end{Proposition}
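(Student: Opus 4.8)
The plan is to express the column vector $\begin{pmatrix}P_k\\P_{k-1}\end{pmatrix}$ as an ordered product of the elementary symmetric matrices $M_j:=\begin{pmatrix}\epsilon_j X & 1\\ 1 & 0\end{pmatrix}$, then to decompose the sign sequence $(\epsilon_k)$ attached to the pair $(p_n,d)$ into maximal constant runs, and finally to collect each run into a single matrix power. First I would record the elementary identity $\begin{pmatrix}P_k\\P_{k-1}\end{pmatrix}=M_kM_{k-1}\cdots M_1\begin{pmatrix}1\\0\end{pmatrix}$ valid for all $k\ge 0$: since the $M_j$ are symmetric, this is just the transpose of \eqref{eqn.sym} read on its first column, or equivalently a one-line induction from $P_k=\epsilon_kXP_{k-1}+P_{k-2}$ with $P_0=1$, $P_{-1}=0$. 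Recall that here $\epsilon_k=(-1)^{\lfloor kd/p_n\rfloor}$, i.e.\ the signs associated with $(p_n,d)$, since $V_n$ is the algebra of that pair.

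Next I would analyse the run structure of $(\epsilon_1,\dots,\epsilon_{p_n-1})$. From $ad-bc=1$ we get $\gcd(c,d)=1$, hence $rc/d\notin\Z$ for $0<r<d$, and also $\gcd(p_n,d)=\gcd(c,d)=1$; moreover $p_n=dn+c$ gives $\lceil rp_n/d\rceil=rn+\lceil rc/d\rceil$. The condition $\lfloor kd/p_n\rfloor=m$ is equivalent to $\lceil mp_n/d\rceil\le k\le\lceil(m+1)p_n/d\rceil-1$, and two consecutive such blocks carry opposite signs $(-1)^m$ and $(-1)^{m+1}$, so each block is exactly one run of $(\epsilon_k)$. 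Intersecting with $1\le k\le p_n-1$, so that $m$ ranges over $0,\dots,d-1$, the $r$-th run ($1\le r\le d$) then has sign $(-1)^{r-1}=(-1)^{r+1}$ and length $n+\alpha_r$: for $1<r<d$ this is $\lceil rp_n/d\rceil-\lceil(r-1)p_n/d\rceil=n+\lfloor rc/d\rfloor-\lfloor(r-1)c/d\rfloor$; the first run is shorter by one because the value $k=0$ is discarded, giving $\lceil p_n/d\rceil-1=n+\lfloor c/d\rfloor$; and the last run terminates at $k=p_n-1$ (the value $k=p_n$ being out of range), giving $p_n-\lceil(d-1)p_n/d\rceil=n+c-\lceil(d-1)c/d\rceil=n+\alpha_d$. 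These lengths telescope to $dn+(c-1)=p_n-1$, consistent with $\alpha_1+\cdots+\alpha_d=c-1$.

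Finally I would collect. In the product $M_{p_n-1}M_{p_n-2}\cdots M_1$ the factors indexed by a single run are all equal, and reading from the left we meet run $d$, then run $d-1$, \dots, then run $1$; collecting, the $r$-th run contributes $\begin{pmatrix}(-1)^{r+1}X & 1\\ 1 & 0\end{pmatrix}^{n+\alpha_r}$, and applying the product to $\begin{pmatrix}1\\0\end{pmatrix}$ yields the first displayed formula. For the second, $dq_n-bp_n=1$ with $0\le b<d$ gives $\sum_{r=1}^b\alpha_r=\lfloor bc/d\rfloor=a-1$, hence $q_n-1=\sum_{r=1}^b(n+\alpha_r)$, so that $\{1,\dots,q_n-1\}$ is exactly the union of the first $b$ runs; applying the same collecting procedure to $M_{q_n-1}\cdots M_1$ produces the claimed product indexed by $r=1,\dots,b$. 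I expect the only delicate point to be the boundary bookkeeping of the two extreme runs — the omission of $k=0$, which makes $\alpha_1$ a floor- rather than ceiling-difference, and the omission of $k=p_n$, which produces the extra $-1$ in the definition of $\alpha_d$ — together with keeping the left-to-right order of the matrix product straight; the rest is routine manipulation of floor functions under $\gcd(c,d)=1$.
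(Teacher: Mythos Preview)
Your proposal is correct and follows essentially the same approach as the paper: write $\begin{pmatrix}P_k\\P_{k-1}\end{pmatrix}$ as the ordered product $M_kM_{k-1}\cdots M_1\begin{pmatrix}1\\0\end{pmatrix}$, determine the block structure of the sign sequence $\epsilon_k=(-1)^{\lfloor kd/p_n\rfloor}$ by locating the block endpoints $k_r=rn+\lfloor rc/d\rfloor$, and collect each block of length $n+\alpha_r$ into a single matrix power; the second formula then follows from $k_b=q_n-1$. Your treatment of the boundary cases (the omitted $k=0$ and $k=p_n$) is in fact slightly more explicit than the paper's, which simply declares $k_0=0$ and redefines $k_d=p_n-1$.
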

\begin{proof}
We first consider the $\epsilon$-sequence $\epsilon_k=(-1)^{\lfloor \frac{dk}{dn+c}\rfloor}$  for $1 \leq k \leq p_n -1$ associated to the Frobenius algebra $V_n$. It is a sequence of $d$ alternating blocks of $1$s and $-1$s. Precisely, to find the last term of the $r$-th block for $1 \leq r \leq d$, we compare $dk$ and $r(dn+c)$: 
writing $k=rn+s$, this amounts to compare $ds$ and $rc$. The last term of the $r$-th block is then $k_r=rn+\lfloor\frac{rc}{d}\rfloor$. In particular, the last term is $k_d=nd+c=p_n$. As we want to stop the sequence at $p_n-1$, we set instead $k_d=p_n-1$. We also set $k_0=0$ for notational convenience. Note that $k_r - k_{r-1} = n +\alpha_r$.

We now recall that $P_k=\epsilon_kXP_{k-1}+P_{k-2}$. Equivalently,
$$\begin{pmatrix}P_{k}\\P_{k-1}\end{pmatrix}=\begin{pmatrix}\epsilon_k X& 1 \\ 1 & 0\end{pmatrix}\begin{pmatrix} P_{k-1} \\ P_{k-2}\end{pmatrix}\,.$$
By the definition of $k_r$, one has $\epsilon_k=(-1)^r$ for all $k_{r-1} < k \leq k_{r}$. Since $k_r -k_{r-1} = n+\alpha_r$, we obtain the first formula.

On the other hand, as $ad-bc=1$, we have $\frac{bc}{d}=a-\frac{1}{d}$ and thus
$k_b=bn+\lfloor \frac{bc}{d}\rfloor = bn+a-1 =q_n -1$. It follows that $P_{q_n-1}$ is obtained from the same formula as $P_{p_n-1}$ by taking only the first $b$ blocks. This proves the proposition.
\end{proof}

Set $X= i(t+t^{-1})$ for a formal variable $t$. Then a direct computation shows that 
\begin{equation}
\label{eqn.power}
    \begin{pmatrix} X& 1 \\ 1 & 0\end{pmatrix}^k=\frac{i^k}{t-t^{-1}}\begin{pmatrix} t^{k+1}-t^{-k-1}& -i(t^{k}-t^{-k}) \\ -i(t^{k}-t^{-k}) & -(t^{k-1}-t^{-k+1})\end{pmatrix} .
\end{equation}
Changing the sign of $X$ amounts to changing $i$ to $-i$, hence
\begin{equation}
\label{eqn.power2}
    \begin{pmatrix}-X& 1 \\ 1 & 0\end{pmatrix}^k=\frac{i^{-k}}{t-t^{-1}}\begin{pmatrix} t^{k+1}-t^{-k-1}& i(t^{k}-t^{-k}) \\ i(t^{k}-t^{-k}) & -(t^{k-1}-t^{-k+1})\end{pmatrix} .
\end{equation}
Combining the above formulas with Proposition~\ref{description}, we obtain the following proposition, which produces three two-variable polynomials. We will need more precise information about these polynomials, which we defer to the technical Lemma~\ref{Qlemma}. 
\begin{Proposition}\label{changevariable}
There exist polynomials $Q_{M},R_{M},S_M\in \Z[U^{\pm 1},V^{\pm 1}]$, depending only on $M \in \Gamma_S(2)$, such that 
$$\begin{cases}P_{p_n-1}(it+it^{-1})(t-t^{-1})^d=i^{n+1}Q_{M}(t,t^n)\\
P_{p_n-2}(it+it^{-1})(t-t^{-1})^d=i^{n}R_{M}(t,t^n)\\
P_{q_n-1}(it+it^{-1})(t-t^{-1})^b=S_{M}(t,t^n)
\end{cases}$$
for any odd $n$.
\end{Proposition}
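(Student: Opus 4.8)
\textbf{Proof plan for Proposition~\ref{changevariable}.}
The plan is to feed Proposition~\ref{description} into the power formulas \eqref{eqn.power} and \eqref{eqn.power2}, after performing a single conjugation that clears all the spurious factors of $i$. After the substitution $X=i(t+t^{-1})$, set $P=\begin{pmatrix}1&0\\0&i\end{pmatrix}$, $C=\begin{pmatrix}t+t^{-1}&1\\-1&0\end{pmatrix}$ and $C'=\begin{pmatrix}-(t+t^{-1})&1\\-1&0\end{pmatrix}$. Conjugating \eqref{eqn.power} and \eqref{eqn.power2} by $P$ (a short check) yields, for every $k\ge 0$,
\[
\begin{pmatrix}X&1\\1&0\end{pmatrix}^{k}=i^{k}\,P\,C^{k}\,P^{-1},\qquad
\begin{pmatrix}-X&1\\1&0\end{pmatrix}^{k}=i^{k}\,P\,(C')^{k}\,P^{-1},
\]
with the \emph{same} matrix $P$ on both sides (the sign difference between the two cases being absorbed into $C'$, whose eigenvalues are $-t^{\pm1}$). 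Since $C$ and $C'$ have eigenvalues $t^{\pm1}$, resp. $-t^{\pm1}$, the matrices $(t-t^{-1})C^{k}$ and $(t-t^{-1})(C')^{k}$ have entries in $\Z[t^{\pm1},(t^{k})^{\pm1}]$, i.e. after clearing the single factor $t-t^{-1}$ the only way $k$ enters is through $t^{\pm k}$.

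Next I substitute $X=i(t+t^{-1})$ into the first identity of Proposition~\ref{description}. Writing $C^{(r)}=C$ for odd $r$ and $C^{(r)}=C'$ for even $r$, the $r$-th factor $\begin{pmatrix}(-1)^{r+1}X&1\\1&0\end{pmatrix}^{n+\alpha_r}$ equals $i^{n+\alpha_r}P(C^{(r)})^{n+\alpha_r}P^{-1}$, so in the product all the internal $P^{-1}P$ cancel telescopically and, using $\alpha_1+\cdots+\alpha_d=c-1$,
\[
\begin{pmatrix}P_{p_n-1}\\P_{p_n-2}\end{pmatrix}\Bigg|_{X=i(t+t^{-1})}=i^{\,dn+c-1}\,P\,D\,P^{-1}\begin{pmatrix}1\\0\end{pmatrix},\qquad D:=(C^{(d)})^{n+\alpha_d}\cdots(C^{(1)})^{n+\alpha_1}.
\]
Since $P^{-1}\begin{pmatrix}1\\0\end{pmatrix}=\begin{pmatrix}1\\0\end{pmatrix}$ and $P$ only scales the second coordinate by $i$, this gives $P_{p_n-1}(i(t+t^{-1}))=i^{dn+c-1}D_{11}$ and $P_{p_n-2}(i(t+t^{-1}))=i^{dn+c}D_{21}$. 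Each of the $d$ factors of $D$ contributes one $\tfrac1{t-t^{-1}}$ together with entries in which $n$ enters only through $t^{\pm(n+\alpha_r)}=t^{\pm\alpha_r}t^{\pm n}$; hence $(t-t^{-1})^{d}D_{11}$ and $(t-t^{-1})^{d}D_{21}$ are specializations, at $t^{n}$, of fixed Laurent polynomials in $\Z[U^{\pm1},V^{\pm1}]$ (with $U=t$, $V=t^{n}$) depending on $M$ only through the integers $\alpha_r$.

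It remains to normalise the power of $i$. Because $n$ and $d$ are odd and $c$ is even, the exponents $dn+c-1$ and $n+1$ differ by $(d-1)n+c-2$, which is even, and since $n$ is odd this gives $i^{dn+c-1}=\mu_1\,i^{n+1}$ for a fixed sign $\mu_1\in\{\pm1\}$ depending only on $M$; likewise $i^{dn+c}=\mu_2\,i^{n}$ with $\mu_2\in\{\pm1\}$. Setting $Q_M=\mu_1(U-U^{-1})^{d}D_{11}$ and $R_M=\mu_2(U-U^{-1})^{d}D_{21}$ yields the first two equations of the Proposition. For $S_M$ one runs the identical argument on the second identity of Proposition~\ref{description}, i.e. keeping only the first $b$ blocks: the scalar prefactor is $i^{\,bn+\alpha_1+\cdots+\alpha_b}=i^{\,bn+(a-1)}$ since $\alpha_1+\cdots+\alpha_b=\lfloor bc/d\rfloor=a-1$ (from the proof of Proposition~\ref{description}), and as $b$ is even and $a$ is odd this is a fixed sign $\mu_3\in\{\pm1\}$; then $S_M=\mu_3(U-U^{-1})^{b}D'_{11}$ works, where $D'$ is the length-$b$ analogue of $D$.

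The only genuinely non-routine step is the conjugation by $P=\begin{pmatrix}1&0\\0&i\end{pmatrix}$: it is what makes the internal factors of $i$ in \eqref{eqn.power}, \eqref{eqn.power2} disappear \emph{and} makes the conjugating matrices telescope, so that the product of Proposition~\ref{description} becomes a scalar power of $i$ times an integral $2$-variable Laurent-polynomial matrix. Everything after that is parity bookkeeping on powers of $i$ — using $a,d$ odd, $b,c$ even, $n$ odd, $\alpha_1+\cdots+\alpha_d=c-1$ and $\alpha_1+\cdots+\alpha_b=a-1$ — together with the observation that the whole dependence on $n$ is confined to $t^{n}$. (All the displayed identities are read as identities of Laurent polynomials in $t$ and in the formal symbol $t^{n}$, valid for every odd $n$ large enough that $p_n,q_n$ are admissible; the cases $k\ge 0$ of the power formulas suffice since each exponent $n+\alpha_r$ is non-negative.)
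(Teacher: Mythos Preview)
Your approach is correct and is essentially the same as the paper's, packaged differently. Both proofs feed Proposition~\ref{description} into the power formulas \eqref{eqn.power}, \eqref{eqn.power2} and then do parity bookkeeping on the resulting power of $i$. The paper factors out $i^{\pm k}$ from each block (getting the alternating exponent $n+\kappa$ with $\kappa=\sum_r(-1)^{r+1}\alpha_r\equiv c-1\pmod 2$) and then observes that the remaining matrices have real diagonal and purely imaginary off-diagonal entries, a structure preserved under products. Your conjugation by $P=\mathrm{diag}(1,i)$ is precisely the slick way to implement that observation: it turns a matrix of the form $\bigl(\begin{smallmatrix}a&ib\\ic&d\end{smallmatrix}\bigr)$ with $a,b,c,d$ real into a real matrix, so the ``stable under products'' remark becomes the telescoping $P^{-1}P$. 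The two bookkeepings of the scalar power of $i$ ($n+\kappa$ versus $dn+c-1$) are different but both reduce to $\pm i^{n+1}$ for odd $n$.

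One small imprecision worth flagging: your assertion that for $(C')^{k}$ ``the only way $k$ enters is through $t^{\pm k}$'' is not quite right. Since $C'$ has eigenvalues $-t^{\pm1}$, one finds
\[
(t-t^{-1})(C')^{k}=(-1)^{k}\begin{pmatrix}t^{k+1}-t^{-k-1}&-(t^{k}-t^{-k})\\t^{k}-t^{-k}&-(t^{k-1}-t^{-k+1})\end{pmatrix},
\]
so there is an extra sign $(-1)^{k}$ not expressible in terms of $t$ and $t^{k}$ alone. This is harmless for your purposes: $(C')$ only occurs with exponent $k=n+\alpha_r$, and for odd $n$ the sign $(-1)^{n+\alpha_r}=-(-1)^{\alpha_r}$ is a constant depending only on $M$, hence absorbed into your $\mu_1,\mu_2,\mu_3$. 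It would be cleaner to say this explicitly rather than to claim integrality of $(t-t^{-1})(C')^k$ in $\Z[t^{\pm1},(t^k)^{\pm1}]$ as a uniform statement in $k$.
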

\begin{proof}
    It follows from Proposition~\ref{description} and Equations~\eqref{eqn.power}, \eqref{eqn.power2} that $P_{p_n-1}(it+it^{-1})$ can be expressed as a polynomial in $t$ and $t^n$ with coefficients in $\Z[i]$, multiplied by $i^{\sum_{r}(-1)^{r+1}(n+\alpha_r)}(t-t^{-1})^d=i^{n+\kappa}(t-t^{-1})^d$ where $\kappa=\sum_r (-1)^{r+1}\alpha_r$. As $\kappa\equiv\sum_r \alpha_r=c-1\equiv 1 \pmod{2}$, one gets $\pm i^{n+1}(t-t^{-1})^d$. Remaining powers of $i$ appear exactly in non-diagonal coefficients, and this property is stable by product.
%
    This proves the desired identity for $P_{p_n-1}$ and $P_{p_n-2}$. For $P_{q_n-1}$, there is no term of the form $i^n$ and the remaining power of $i$ is $\alpha_1-\alpha_2+\cdots+\alpha_b\equiv a-1\equiv 0$ modulo 2.
\end{proof}

\subsection{The signature is a polynomial in $n$}
Let us prove in this section the first item of Theorem \ref{mainsign}. 

From \cite{S2P}, one has $\Omega_n=-P_{p_n-2}P_{p_n-1}'$, hence 
\begin{equation}\label{signature-trace}
 \sigma_g  \left(\frac{q_n}{p_n} \right)=
\Tr_{V_n/\Q}\left( \big(-P_{q_n-1}^{-2} P_{p_n-2} P'_{p_n-1}\big)^{g-1}\right).
\end{equation}
Recall from the Residue Formula (see e.g. \cite{G}) that given any polynomial $P$ with simple roots and any $Q\in \Q[x]$ one has
\begin{equation*}
 \Tr_{\Q[x]/P}(Q)=\sum_{P(x)=0}Q(x)=\sum_{x \in \C}\Res_x(\frac{QP'}{P}dx)=-\Res_{x=\infty}(\frac{QP'}{P}dx).
\end{equation*}
We cannot apply this directly to the computation of $\sigma_g$ because we need to invert $P_{q_n-1}$, which creates new poles and invalidates the residue formula. To circumvent this problem, we use
$$ \frac{x^*}{x}=P_{q_n-1}(x),\quad \frac{x}{x^*}=P^*_{d-1}(x^*)$$
from which we deduce 
$$P_{q_n-1}(x)^{-1}=P^*_{d-1}(x^*)=P^*_{d-1}(xP_{q_n-1}(x)).$$
To simply the notation, let us set $Q(x)=P^*_{d-1}(xP_{q_n-1}(x))$.
In order to apply Proposition \ref{changevariable} , we perform the change of variables $x=it+it^{-1}$. Since this maps $t=0$ to $x=\infty$ and is non-ramified there, we can replace $x$ by $it+it^{-1}$, $x=\infty$ by $t=0$, and $dx$ by $\frac{i(t-t^{-1})dt}{t}$. It follows that $\sigma_g(\frac{q_n}{p_n})$ is equal to  
\begin{equation*}
\underset{t=0}{\Res}\left(
(-1)^g\frac{Q(it+it^{-1})^{2g-2}P_{p_n-2}(it+it^{-1})^{g-1} P'_{p_n-1}(it+it^{-1})^{g}}
{P_{p_n-1}(it+it^{-1})}\frac{i(t-t^{-1})dt}{t}
\right).
\end{equation*}
On the other hand, from Proposition~\ref{changevariable}, one computes
\begin{equation}\label{Pderiv}
P'_{p_n-1}(it+it^{-1})=i^{n}\left(\frac{t\frac{d}{dt}Q_M(t,t^n)}{(t-t^{-1})^{d+1}}-\frac{d(t+t^{-1})Q_M(t,t^n)}{(t-t^{-1})^{d+2}}\right).
\end{equation}
Substituting this into the above residue expression of $\sigma_g(\frac{q_n}{p_n})$, we obtain 
\begin{equation}
\label{eqn.sres}
\sigma_g \left( \frac{q_n}{p_n} \right) = \Res_{t=0} \left( \frac{R(t,t^n)}{Q_M(t,t^n)(t-t^{-1})^m} \dfrac{dt}{t}\right)
\end{equation}
for some integer $m$ independent of $n$ and some $R\in \Z[U^{\pm1},V^{\pm 1}]$. 

\begin{Lemma}
\label{lem.RS}
Let $R$ and $S\in \Z[U^{\pm 1},V^{\pm 1}]$ be two polynomials and suppose that the lowest $V$-coefficient of $S$ is $(U-U^{-1})^\ell$ modulo $\Z[U^{\pm 1}]^\times$. Then 
$$\Res_{t=0} \left(\frac{R(t,t^n)}{S(t,t^n)(t-t^{-1})^m}\frac{dt}{t}\right)$$
is a polynomial in $n$ of order less than $m+\ell$.
\end{Lemma}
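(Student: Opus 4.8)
The plan is to read the residue off as the constant term of a Laurent series, localize that term using the hypothesis on $S$, and then recognize what survives as the coefficient of a \emph{fixed} rational function whose only finite poles are at $t=\pm1$ — which, precisely because $n$ is odd, forces polynomial dependence on $n$.

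First I would use that $\Res_{t=0}(f(t)\,\tfrac{dt}{t})$ is the coefficient of $t^0$ in $f$, so the quantity in question is $[t^0]$ of $\tfrac{R(t,t^n)}{S(t,t^n)(t-t^{-1})^m}$. Write $S(U,V)=\sum_{j\ge j_0}S_j(U)V^j$ with $S_{j_0}\ne0$; the hypothesis gives $S_{j_0}(U)=\mu\,U^a(U-U^{-1})^\ell$ with $\mu\in\{\pm1\}$, $a\in\Z$. For $n$ large the monomial $t^{nj_0}S_{j_0}(t)$ has strictly smallest $t$-valuation among the $t^{nj}S_j(t)$, so one factors
$$S(t,t^n)=t^{nj_0}S_{j_0}(t)\bigl(1+E_n(t)\bigr),\qquad E_n(t)=\sum_{j>j_0}\frac{S_j(t)}{S_{j_0}(t)}\,t^{n(j-j_0)},$$
with $E_n$ of $t$-valuation at least $n+c$ ($c$ depending only on $S$, since each $j-j_0\ge1$). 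Hence $\tfrac{1}{1+E_n}=\sum_{s\ge0}(-E_n)^s$ as a Laurent series in $t$, and since $(-E_n)^s$ has valuation $\ge s(n+c)$, only boundedly many $s$, independently of $n$, affect the coefficient of $t^0$.

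Using $S_{j_0}(t)^{-1}=\mu^{-1}t^{-a}(t-t^{-1})^{-\ell}$ and writing $R(U,V)=\sum_i r_i(U)V^i$, the contribution of the $s$-th summand to $[t^0]$ is a finite sum, indexed by multi-indices $(i,j_1,\dots,j_s)$ with $e:=i+\sum_k(j_k-j_0)\le j_0$ (a severe restriction, as each $j_k-j_0\ge1$), of terms
$$(-1)^s\mu^{-(s+1)}\,[\,t^{\,n(j_0-e)+a(s+1)}\,]\bigl(r_i(t)\,S_{j_1}(t)\cdots S_{j_s}(t)\,(t-t^{-1})^{-\ell(s+1)-m}\bigr).$$
Each bracket is a fixed rational function $h(t)$ whose poles, apart from $t=0,\infty$, lie only at $t=\pm1$; partial fractions there give $[t^N]h=p_+(N)+(-1)^N p_-(N)$ for $N$ large, with $\deg p_\pm$ bounded by the orders of the poles of $h$ at $\pm1$. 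Since $n$ is odd, $(-1)^N$ is constant along the sequence $n\mapsto N:=n(j_0-e)+a(s+1)$, so each of the finitely many contributions is a genuine polynomial in $n$; summing them shows the residue agrees with a polynomial in $n$ for $n$ large, which (as a polynomial is determined by such values) is the first assertion.

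It remains to bound the degree by $m+\ell-1$, and this is where I expect the real work to be. The $s=0$ terms involve only $(t-t^{-1})^{-\ell-m}$, with poles of order $m+\ell$ at $\pm1$, hence give degree $<m+\ell$; the terms with $s\ge1$ carry $(t-t^{-1})^{-\ell(s+1)-m}$, i.e.\ potentially poles of order up to $m+(s+1)\ell$ at $\pm1$, so a naive count allows larger degree. The point to establish is that these higher-$s$ contributions cannot actually reach degree $\ge m+\ell$: I would use the constraint $e\le j_0$ together with $j_k-j_0\ge1$ — which forces $i\le j_0-s$, thereby bounding $s$ and pinning down the admissible multi-indices — and then the precise shape $S_{j_0}=\mu U^a(U-U^{-1})^\ell$, to show that the putative top-degree parts either vanish for $n$ large or cancel among the surviving multi-indices. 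This bookkeeping of pole orders against admissible exponents is the technical heart of the lemma; everything else is routine coefficient extraction.
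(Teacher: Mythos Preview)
Your approach is essentially identical to the paper's: factor out the lowest $V$-term of $S$, expand $\tfrac{1}{1+E_n}$ geometrically, observe that only finitely many $s$ contribute, and read off each contribution as a coefficient of a fixed rational function with poles only at $t=\pm1$. The paper expands $(1-t^2)^{-(m+\ell)}$ as a binomial series rather than using partial fractions, but this is cosmetic. Your argument for the polynomiality in $n$ is correct and matches the paper.

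You are right to flag the degree bound for $s\ge1$ as a gap, and your suspicion that these contributions may exceed degree $m+\ell-1$ is in fact \emph{correct}: the bound fails in general. Take $m=0$, $S(U,V)=(U-U^{-1})V^{-1}+1$ (so $\ell=1$) and $R(U,V)=U^{-1}V^{-3}$; a direct computation gives
\[
\Res_{t=0}\frac{R(t,t^n)}{S(t,t^n)}\,\frac{dt}{t}=-\frac{n+3}{2}
\]
for odd $n$, which has degree $1=m+\ell$, not $<m+\ell$. The paper's own proof elides exactly this point: it asserts that after expanding $\tilde S(t,t^n)^i$ ``all terms have the form $C\,t^{A+Bn}\sum_{j\ge0}t^{2j}\binom{j+m+\ell-1}{m+\ell-1}$'', which tacitly treats $\tilde S$ as a Laurent polynomial in $U$. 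In general, however, $\tilde S$ carries an extra factor $(U-U^{-1})^{-\ell}$, so $\tilde S^i$ contributes a further pole of order $i\ell$ at $t=\pm1$ and raises the degree in $n$ by precisely the amount you feared. The ``bookkeeping'' you propose therefore will not close the gap without additional hypotheses on $S$. Fortunately, the only use of this lemma in the paper is to conclude that the signature is a polynomial in $n$, for which the sharp degree bound is irrelevant; your argument already suffices for that.
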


\begin{proof}
It is sufficient to consider $R(U,V)=U^a V^b$ by the linearity of the residue. Let $d$ be the lowest $V$-degree of $S$. Then, by the assumption, we can write $$S(U,V)=\epsilon U^c(U-U^{-1})^\ell V^d(1-V\tilde{S}(U,V))$$ where $\epsilon=\pm 1$ and $\tilde{S}\in \Z[U^{\pm 1},V][[U]]$, hence 
\begin{align*}
&\frac{R(t,t^n)}{S(t,t^n)(t-t^{-1})^m}=\frac{t^{a+nb-c-nd+m+\ell}}{\epsilon(1-t^n\tilde{S}(t,t^n))(t^2-1)^{m+\ell}}\\
&=\epsilon^{-1} t^{a-c+m+\ell+n(b-d)}\sum_{i,j\ge 0}t^{ni}\tilde{S}(t,t^n)^it^{2j}(-1)^{m+\ell}\binom{j+m+\ell-1}{m+\ell-1}.
\end{align*}
We need to pick the constant coefficient in $t$: the key point is that only a finite number of $i's$ can contribute. Fix a value of $i$ and expand $\tilde{S}(t,t^n)^i$. All terms have the form 
$$C\sum_{j\ge 0}t^{A+Bn}t^{2j}\binom{j+m+\ell-1}{m+\ell-1}.$$
If $B<0$ and $A+Bn$ is even, the constant term of this series is $\binom{\frac{A+Bn}{2}+m+\ell-1}{m+\ell-1}$. This is a polynomial in $n$ (which is assumed to be odd) of degree $m+\ell-1$. This completes the proof.
\end{proof}

Lemma~\ref{Qlemma} below shows that Lemma~\ref{lem.RS} can be applied to Equation~\eqref{eqn.sres}. It follows that the signature $\sigma_g(\frac{q_n}{p_n})$ is a polynomial in $n$, which proves the first part of Theorem~\ref{mainsign}.

\subsection{The leading term of the signature}

We now prove the second part of Theorem \ref{mainsign}, starting again from Equation \eqref{signature-trace} that we rewrite in the following form:
$$\sigma_g \left(\frac{q_n}{p_n}\right)=\sum_{x\in Z_n}\left(\frac{-P_{p_n-2}(x)P'_{p_n-1}(x)}{P_{q_n-1}(x)^2}\right)^{g-1}$$
where $Z_n=\{x\in \C \,|\, P_{p_n-1}(x)=0\}$. From the relation between $P_{p_n-1}$ and $Q_M$ in Proposition~\ref{changevariable}, one sees a direct relationship between the zeros of $P_{p_n-1}$ and the zeros of $Q_M$. To be more precise, we define 
$$\calC=\{(u,v)\in (\C^*)^2 \,|\, Q_M(u,v)=0\},\quad \calC[n]=\{(u,v)\in \calC\setminus \{\pm 1\}^2\,|\, v=u^n\}.$$
The map $\calC[n]\to Z_n$ given by $(u,v)\mapsto i(u+u^{-1})$ is surjective and $2$-to-$1$: two preimages are related by the involution $(u,v)\mapsto(u^{-1},v^{-1})$.

\begin{Lemma}
The set $\bigcup_{n\ge 1}\calC[n]$ is relatively compact in $\calC$. 
\end{Lemma}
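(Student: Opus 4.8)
The plan is to exhibit one compact set $K\subset(\C^*)^2$ containing every $\calC[n]$. Since $\calC$ is closed in $(\C^*)^2$, the set $K\cap\calC$ is then a compact subset of $\calC$ containing $\bigcup_{n\ge1}\calC[n]$, which is exactly the assertion. Writing a point of $\calC[n]$ as $(u,u^n)$, this amounts to producing constants $0<c<C$, depending only on $M$, with $c\le|u|\le C$ and $c\le|u|^n\le C$ for all such points. Only $n\ge N_0$ needs attention: for each fixed $n$ the one-variable Laurent polynomial $f_n(U):=Q_M(U,U^n)$ is nonzero, because $Q_M(t,t^n)=i^{-(n+1)}P_{p_n-1}(it+it^{-1})(t-t^{-1})^d$ by Proposition~\ref{changevariable} while $P_{p_n-1}\not\equiv0$; hence $\calC[n]$ is finite and the finitely many small $n$ contribute only a finite set.

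The first step is to bound $|u|$. Combining Proposition~\ref{changevariable} with the central symmetry $Q_M(U^{-1},V^{-1})=(-1)^dQ_M(U,V)$ (which holds because $P_{p_n-1}(it+it^{-1})$ is invariant under $t\leftrightarrow t^{-1}$), one sees that for $n$ large the two monomials of $f_n$ of extreme $U$-degree are those attached to the two extreme vertices of the Newton polygon of $Q_M$, whose coefficients are units, while every coefficient of $f_n$ is bounded by $\max_{(i,j)}|a_{ij}|$, a constant depending only on $M$. Cauchy's estimate for the roots of a Laurent polynomial then yields $c_1\le|u|\le C_1$ for all $(u,v)\in\calC[n]$ with $n\ge N_0$; equivalently this is a uniform bound on the roots of the Riley polynomials $P_{p_n-1}$ via $x=i(u+u^{-1})$. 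Feeding the involution image $(u^{-1},u^{-n})\in\calC[n]$ — the map $(u,v)\mapsto(u^{-1},v^{-1})$ preserves each $\calC[n]$ — into the same estimate also bounds $|u|$ from below.

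The remaining point, and the main obstacle, is to bound $|v|=|u|^n$ from above (the lower bound then follows from the involution again): a uniform bound on $|u|$ is not enough, since $|u|^n$ can still grow unless $|\log|u||=O(1/n)$. Here I would use the finer information on $Q_M$ recorded in Lemma~\ref{Qlemma}, notably that the coefficients $P_{\pm d}(U)$ of the extreme powers $V^{\pm d}$ of $Q_M$ have all their $\C^*$-zeros on the unit circle. A dominant-term comparison then shows that, for any fixed $\varepsilon>0$ and $n$ large, the term $P_d(u)u^{nd}$ (resp.\ $P_{-d}(u)u^{-nd}$) strictly dominates the rest of $f_n(u)$ whenever $1+\varepsilon\le|u|\le C_1$ (resp.\ $c_1\le|u|\le1-\varepsilon$), so $f_n$ has no zeros there; hence every point of $\bigcup_{n\ge N_0}\calC[n]$ lies in the thin annulus $\{\,1-\varepsilon<|u|<1+\varepsilon\,\}$. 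To upgrade this to $|\log|u||=O(1/n)$ one expands $f_n$ along radial rays, $f_n(e^{i\phi}e^{s})=\sum_jP_j(e^{i\phi})e^{inj\phi}e^{njs}+O(s)$, whose derivative in $s$ at $s=0$ has modulus of order $n$; a zero with $|s|$ small then forces $s=O(1/n)$, except near the finitely many points of the unit circle at which this derivative degenerates, namely the zeros of $P_{\pm d}$ and the points $u=\pm1$ (where $(U-U^{-1})^d$ divides $Q_M(U,U^n)$, by Proposition~\ref{changevariable}, so that the $\calC[n]$-points there are the pullbacks of the roots of $P_{p_n-1}$ near $x=\pm2i$). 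The behaviour at these exceptional points is the delicate part and is where the explicit local form of $Q_M$ from Lemma~\ref{Qlemma} (the same data entering the Frobenius algebra $W_M$) is used: it shows that the zeros of $f_n$ accumulating at them still satisfy $|\log|u||=O(1/n)$, so that the branches of $\calC$ running off to $v=0$ or $v=\infty$ — which, by the description of $P_{\pm d}$, emanate only from points of the unit circle — carry no point of $\bigcup_{n}\calC[n]$. Granting this, $|v|=|u|^n$ stays in a fixed compact subset of $\C^*$; together with the bounds on $|u|$ this places $\bigcup_{n\ge1}\calC[n]$ in a fixed compact subset of $(\C^*)^2$ and proves the lemma.
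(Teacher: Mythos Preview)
Your sketch leaves a genuine gap at exactly the place you flag as ``the delicate part'': you need $|\log|u||=O(1/n)$ uniformly, and at the exceptional points $u=\pm i$ (the zeros of the leading $V$-coefficient $(U+U^{-1})^{d-1}$) your radial-derivative argument breaks down. You then assert that the local form of $Q_M$ from Lemma~\ref{Qlemma} takes care of it, but nothing in that lemma gives the required quantitative control on $|u|^n$ along branches emanating from $u=\pm i$; the items (4)--(6) describe the curve near $(u,v)\in\{\pm1\}^2$, not near $u=\pm i$. So as written the argument does not close, and filling it in would require a separate analysis of those branches that is at least as much work as the whole lemma.

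The paper's proof avoids all of this by working on the curve rather than in the ambient torus. Relative compactness in $\calC$ fails only if some sequence in $\bigcup_n\calC[n]$ runs off to an \emph{ideal point} of (the normalization of) $\calC$. Such ideal points are classified by the outer normals of the Newton polygon of $Q_M$, which by Lemma~\ref{Qlemma}(2) is the parallelogram with vertices $\pm(d-1,0)\pm(c,d)$; hence the only possible Puiseux slopes are $(p,q)\in\{(0,\pm1),\pm(d,-c)\}$. For each of these one checks in one line that a local parametrization $u=s^p$, $v=s^qF(s)$ with $F(0)\neq0$ is incompatible with the relation $v=u^n$ for $s$ near $0$: either $F(s)=s^{\pm n}$ or $F(s)=s^{\pm(nd+c)}$ would be forced, both impossible since $F(0)\neq0$. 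This disposes of all ideal points at once, with no estimates and no case analysis near $u=\pm i$ or $u=\pm1$. Your strategy of uniformly bounding $|u|$ and $|u|^n$ is aiming at a stronger, distribution-type statement than what is needed here; the Puiseux/Newton-polygon argument is both shorter and complete.
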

\begin{proof}
This amounts to showing that points in $\calC[n]$ cannot accumulate to ideal points of $\calC$. Recall the well-known fact that such ideal points correspond to zeroes of the side polynomials of $Q_M$, that is, the one-variable polynomials associated with  the sides of the Newton polygon of $Q_M$. To see this, fix an ideal point of $\calC$ and take Puiseux coordinates of the form $u=s^p$ and $v=s^qF(s)$, where $p,q$ are coprime and $F$ is a convergent series with $F(0)\neq 0$. As $Q_M(s^p,s^qF(s))=0$ identically, degree considerations imply that the linear form $(i,j)\mapsto pi+qj$ should support a face of the Newton polygon of $Q_M$. 

By Lemma \ref{Qlemma} below, this polygon is an explicit parallelogram, so the only possibilities are $(p,q)\in\{(0,1),(0,-1),(d,-c),(-d,c)\}$. In the first two cases, the Puiseux coordinates have the form $u=s^{\pm 1}$ and $v=F(s)$. As $F(0)\ne 0$, it is impossible to have $F(s)=s^{\pm n}$ in a neighborhood of $0$. In the latter two cases, we have $u=s^{\pm d}$ and $v=s^{\mp c}F(s)$. Again, the equation $v=u^n$ forces $F(s)=s^{ \pm nd\pm c}$, which is impossible for $s$ sufficiently close to $0$. This completes the proof.
\end{proof}

Recall the formula 
$P_{p_n-1}(it+it^{-1})(t-t^{-1})^d=i^{n+1} Q_M(t,t^n)$ of Proposition~\ref{changevariable}. Differentiating it at $(t,t^n)\in \calC[n]$, one has
\begin{equation}\label{derivP}
P_{p_n-1}'(it+it^{-1})i(t-t^{-1})^{d+1}=i^{n+1}t\partial_UQ_M(t,t^n)+i^{n+1}nt^{n}\partial_VQ_M(t,t^n).
\end{equation}
Hence, setting $$F_n(u,v)=(u-u^{-1})^{2b-2d-1}\frac{R_M(u,v)}{S_M(u,v)^2}\left( u\partial_UQ_M(u,v)+nv\partial_VQ_M(u,v)\right)$$
we can write
$$\sigma_g \left(\frac{q_n}{p_n}\right)=\frac{1}{2}\!\!\!\sum_{(u,v)\in \calC[n]}F_n(u,v)^{g-1}.$$

\begin{Lemma}
\label{lem.bdd1}
For any neighborhood $W$ of $\{\pm 1\}^2\subset \calC$, there is a constant $C>0$ such that $|F_n(u,v)|\le C n$ for any $(u,v)\in \calC[n]\setminus W$. 
\end{Lemma}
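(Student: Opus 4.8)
The plan is to split off the unique $n$-dependent factor of $F_n$ and to bound the remaining factor uniformly over a compact set. Write $F_n=A\cdot B_n$ with
$$A(u,v)=(u-u^{-1})^{2b-2d-1}\,\frac{R_M(u,v)}{S_M(u,v)^2}\,,\qquad B_n(u,v)=u\,\partial_UQ_M(u,v)+n\,v\,\partial_VQ_M(u,v)\,.$$
The factor $A$ is independent of $n$, while $B_n$ is an affine function of $n$ whose two coefficients $u\,\partial_UQ_M$ and $v\,\partial_VQ_M$ are Laurent polynomials; hence $|B_n(u,v)|\le C_L\,n$ for all $n\ge1$ on any compact $L\subset(\C^*)^2$. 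By the preceding lemma, $K:=\overline{\bigcup_{n\ge1}\calC[n]}$ is a compact subset of $\calC$, and since $W$ is a neighbourhood of $\{\pm1\}^2$ (which we may take open), $K\setminus W$ is compact and contained in $\calC\setminus\{\pm1\}^2$. As $\calC[n]\setminus W\subset K\setminus W$, it is enough to show that $A$ is bounded on $K\setminus W$: then $|F_n(u,v)|\le\bigl(\sup_{K\setminus W}|A|\bigr)\,C_K\,n$ on $\calC[n]\setminus W$, which is the assertion.

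To bound $A$ on the compact set $K\setminus W$ I would show that $A$, regarded as a rational function on $(\C^*)^2$, has no pole there. Its only possible poles lie on $\{u=1\}\cup\{u=-1\}$, coming from the negative exponent $2b-2d-1$, and on $\{S_M=0\}$, since the numerator $R_M$ is a Laurent polynomial. The first locus is handled by showing $K\cap(\{u=1\}\cup\{u=-1\})=\{\pm1\}^2$: a point of $K$ is a limit of points $(u_k,v_k)\in\calC[n_k]$ with $v_k=u_k^{n_k}$; if $(n_k)$ is bounded the limit already lies in some finite $\calC[n]$, on which $u=\pm1$ forces $v=u^n=\pm1$ and hence $(u,v)\in\{\pm1\}^2$; if $n_k\to\infty$, then the relation $v_k=u_k^{n_k}$ together with the explicit Newton polygon of $Q_M$ and its side polynomials from Lemma~\ref{Qlemma} confines the limit to the vertices $(\pm1,\pm1)$, exactly as in the proof of relative compactness. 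The second locus is handled by the coprimality input: for every $n$ the element $P_{q_n-1}$ is invertible in $V_n=\Q[x]/(P_{p_n-1})$, equivalently $\gcd(P_{p_n-1},P_{q_n-1})=1$, so by Proposition~\ref{changevariable} the polynomials $Q_M$ and $S_M$ have no common zero on any $\calC[n]$; combined with the precise form of $Q_M,R_M,S_M$ in Lemma~\ref{Qlemma}, which localises all common zeros of $Q_M$ and $S_M$ over $\{\pm1\}^2$, this gives $S_M\neq0$ on $K\setminus\{\pm1\}^2\supseteq K\setminus W$. Hence $A$ is regular, and therefore bounded, on the compact set $K\setminus W$.

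Combining the two steps yields $|F_n(u,v)|\le Cn$ on $\calC[n]\setminus W$ with $C=C_K\cdot\sup_{K\setminus W}|A|$, a constant depending only on $W$ (and $M$). The delicate point, which I expect to be the main obstacle, is the regularity of $A$ on $K\setminus W$: one must know that the points of $\calC$ with $u=\pm1$ that are accessible as limits of the sets $\calC[n]$, as well as the zeros of $S_M$ on $\calC$, are all concentrated at the four singular points $\{\pm1\}^2$. This is precisely where the explicit description of the polynomials $Q_M,R_M,S_M$ furnished by Lemma~\ref{Qlemma}, and the coprimality $\gcd(P_{p_n-1},P_{q_n-1})=1$, are genuinely used; the surrounding argument is a routine application of compactness.
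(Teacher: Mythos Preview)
Your overall strategy---isolate the $n$-linear piece $B_n$ and bound the $n$-independent factor $A$ on a compact set---is the same as the paper's. One simplification: your limiting argument for $K\cap\{u=\pm1\}$ is unnecessary. Since $\calC$ is closed in $(\C^*)^2$ one has $K\subset\calC$, and Lemma~\ref{Qlemma}(3) gives $Q_M(\pm1,V)=\pm 2^{d-1}(V-V^{-1})^d$, so directly $\calC\cap\{u=\pm1\}=\{\pm1\}^2$. This is exactly what the paper does.

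The genuine gap is in your treatment of $S_M^{-2}$. You assert that Lemma~\ref{Qlemma} ``localises all common zeros of $Q_M$ and $S_M$ over $\{\pm1\}^2$'', but the lemma contains no such statement: item~(6) describes the lowest-order expansion of $S_M$ at $(1,1)$ via $H_3$, but says nothing about where $S_M$ might vanish elsewhere on $\calC$. The coprimality $\gcd(P_{p_n-1},P_{q_n-1})=1$ only excludes zeros of $S_M$ on the discrete sets $\calC[n]$, not on their closure $K$; nothing rules out a sequence $(u_k,v_k)\in\calC[n_k]$ with $S_M(u_k,v_k)\to 0$ and limit outside $\{\pm1\}^2$.

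The paper sidesteps this entirely by a trick you are missing. Rather than bounding $S_M^{-2}$, it uses the reciprocal identity from Section~\ref{sec.dual}: in $V_n$ one has $P_{q_n-1}(x)^{-1}=P^*_{d-1}\bigl(xP_{q_n-1}(x)\bigr)$, where $P^*_{d-1}$ is a \emph{fixed} polynomial of degree $d-1$ (independent of $n$), because $K(p_n,q_n)$ and $K(p_n,d)$ are isotopic. Thus it suffices to bound $P_{q_n-1}(x)$ itself on $\calC[n]\setminus W$, and this follows from $P_{q_n-1}(it+it^{-1})(t-t^{-1})^b=S_M(t,t^n)$ together with the lower bound $|u^2-1|>\epsilon$ already established. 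So the denominator is controlled not by locating the zeros of $S_M$ on $\calC$, but by exhibiting an explicit polynomial inverse valid on each $\calC[n]$.
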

 
\begin{proof}
Let us write $x=it+it^{-1}$. As $P_{p_n-2}(x)^2=-1$  for any $x\in Z_n$ (see Proposition~\ref{prop.iota}), this term can readily be neglected. 
As $(t,t^n)$ belongs to a compact subset of $\calC$, the polynomials $U\partial_UQ_M(U,V)$ and $V\partial_VQ_M(U,V)$ are uniformly bounded on $\calC[n]$. Moreover, by the third point of Lemma \ref{Qlemma}, the set of $(u,v)\in \calC$ with $u=\pm 1$ is reduced to $\{\pm 1\}^2$. This means that there exists $\epsilon>0$ such that for all $(u,v)\in \calC\setminus W$ one has $|u^2-1|>\epsilon$. Combining these with Equation~\eqref{derivP}, we deduce that there is a constant $C>0$ such that $|P'(iu+iu^{-1})|\le Cn$ for all $(u,v)\in \calC[n]\setminus W$. 

It remains to deal with $P_{q_n-1}(x)^{-2}$. If we first replace it with $P^*_{d-1}(xP_{q_n-1}(x))^2$, we are reduced to showing that $P_{q_n-1}(x)$ is uniformly bounded.
To this aim, recall from Proposition \ref{changevariable} that $P_{q_n-1}(x)(t-t^{-1})^b=S_M(t,t^n)$. The same argument as for $P'_{q_n-1}$ shows that $P_{q_n-1}(it+it^{-1})$ is uniformly bounded for $(t,t^n)\in \calC[n]\setminus W$, proving the lemma. 
\end{proof}


Since the cardinality of $\calC[n]$ is linear in $n$, from the above lemma, one has $$\sum_{(u,v)\in \calC[n] \setminus W} \hspace{-0.2cm} F_n(u,v)^{g-1}=O(n^g)$$ which is negligible compared to the expected $n^{3g-3}$ for $g\ge 2$. Therefore, henceforth we concentrate on $\calC[n]\cap W$.

Let $W^{\pm }$ be a neighborhood $(1,\pm 1)\in \calC$. For parity reasons, the neighborhood of $(-1,\pm 1)$ will contribute in the same way to the sum. The previous lemma implies that 
$$\sigma_g \left(\frac{q_n}{p_n}\right)= \sum_{(u,v)\in \calC[n]\cap (W^+\cup W^-)} \hspace{-0.7cm} F_n(u,v)^{g-1}+O(n^g)$$
Let $\overline{\calC}$ be the normalization of $\calC$ and suppose that the first projection $\overline{\calC}\to\C$ is not ramified over $1$. This implies that there exist $d-1$ branches of $\calC$ with Puiseux coordinates of the form $u=e^s, v=e^{F_j(s)}$ with $F_j(0)=0$ and $F_j'(0)\ne 0$. By the symmetries of $Q_M$ given in Lemma \ref{Qlemma}, the branches over $(\epsilon_u,\epsilon_v)$ are parametrized by $u=\epsilon_u e^s,v=\epsilon_v e^{F_j(s)}$ for $\epsilon_u,\epsilon_v\in \{\pm 1\}$. 

Recall that an element of $\calC[n]$ in the $j$-th branch satisfies $\pm e^{F_j(s)}=\pm e^{ns}$. Equivalently, $F_j(s)=ns-i\pi k$ for some $k\in \Z$. As $F_j(s)$ is close to $0$, there is a unique value of $k\ne 0$ which can solve this equation, reciprocally given $k\ne 0$, there is a unique solution $s_j^k(n)$ of this equation that we can expand into powers of $\frac{1}{n}$ in the form 
$$s_j^k(n)=\frac{i\pi k}{n}\Big(1+F_j'(0)\frac{1}{n}+O(\frac{1}{n^2})\Big).$$
Along such a sequence, we have $u=e^{\frac{i\pi k}{n}+O(\frac{1}{n^2})}$ and $v=(-1)^ke^{\frac{i\pi k }{n}F_j'(0)+O(\frac{1}{n^2})}$. Depending on the parity of $k$, this will converge to $(1,1)$ or $(1,-1)$. 
By the fourth point of Lemma \ref{Qlemma}, one sees that $F_j'(0)$ is a root of $H_1$. As $H_1$ has simple roots by assumption, it justifies our assumption on the ramification of $\calC$ over $1$. 

One can reduce to the case $k>0$: the case $k<0$ corresponds to the involution $(u,v)\mapsto (u^{-1},v^{-1})$ and gives the same contribution. 

Let us compute the behavior of $F_n(u,v)\sim n(u-u^{-1})^{2b-2d-1}\frac{R_M(u,v)v\partial_VQ_M(u,v)}{S_M(u,v)^2}$ on this sequence. 
\begin{itemize}
\item[-] Directly, $(u-u^{-1})^{2b-2d-1}\sim (\frac{2i\pi k}{n})^{2b-2d-1}$.
\item[-] By Lemma \ref{Qlemma} point (6): $S_M(u,v)^2\sim 2^{2b} (\frac{i\pi k}{n})^{2b} H_3(F_j'(0))^2$.
\item[-] By Lemma \ref{Qlemma} point (5): $R_M(u,v)\sim 2^d (\frac{i\pi k}{n})^dH_2(F_j'(0))$.
\item[-] By Lemma \ref{Qlemma} point (4): $v\partial_V Q_M(u,v)\sim 2^d(\frac{i\pi k}{n})^{d-1}H_1'(F_j'(0))$.
\end{itemize}
Putting everything together gives 
$$F_n(e^{s_j^k(n)},e^{ns_j^k(n)})\sim -\frac{n^3}{2(\pi k)^2}\frac{H_1'(F_j'(0))H_2(F_j'(0))}{H_3(F_j'(0))^2}.$$
Summing over $k>0$ and over $j$ gives the final formula
$$\sigma_g \left(\frac{q_n}{p_n}\right)\sim 2\left(\frac{n^3}{2\pi^2}\right)^{g-1} \!\!\zeta(2g-2)\sum_{j}\left(-\frac{H_1'(F_j'(0))H_2(F_j'(0))}{H_3(F_j'(0))^2}\right)^{g-1} .$$

\subsection{The main lemma}
Here we state and prove the main lemma used in the last two subsections. 
\begin{Lemma}\label{Qlemma}
The polynomial $Q_M$ satisfies the symmetries 
$$Q_M(-U,V)=Q_M(U,V)=-Q_M(U,-V)=-Q_M(U^{-1},V^{-1})$$ 
together with the following properties: 
\begin{enumerate}
\item The terms with highest (resp. lowest) power of $V$ are $\pm V^d(U+U^{-1})^{d-1}U^{c}$ and $\pm V^{-d}(U+U^{-1})^{d-1}U^{-c}$ respectively.
\item Its Newton polygon is the parallelogram of vertices $\pm (d-1,0)\pm(c,d)$.
\item $Q_M(\pm 1,V)=\pm 2^{d-1}(V-V^{-1})^d$.
\item $Q_M(e^u,e^v)=2^du^dH_1(v/u)$ modulo terms in $(u,v)$ of total order $<d$. 
\end{enumerate}
Similarly, the polynomials $R_M$ and $S_M$ satisfy
\begin{enumerate}
\item[(5)] $R_M(e^u,e^v)=2^du^dH_2(v/u)$  modulo terms in $(u,v)$ of total order $<d$. 
\item[(6)] $S_M(e^u,e^v)=2^bu^bH_3(v/u)$ modulo terms in $(u,v)$ of total order $<b$.  
\end{enumerate}
\end{Lemma}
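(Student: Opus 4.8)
The strategy is to trace the definitions of $Q_M$, $R_M$, $S_M$ through Proposition~\ref{changevariable} and Proposition~\ref{description}, and to extract the needed structural information by a careful analysis of the matrix product that defines these polynomials. Recall that from Proposition~\ref{description}, the vector $(P_{p_n-1}, P_{p_n-2})^T$ is obtained by multiplying powers of matrices $\left(\begin{smallmatrix}\pm X & 1 \\ 1 & 0\end{smallmatrix}\right)$, each raised to the power $n + \alpha_r$. After the substitution $X = i(t+t^{-1})$ and clearing denominators $(t-t^{-1})^d$, Equations~\eqref{eqn.power} and \eqref{eqn.power2} let us write each matrix power as a $2\times 2$ matrix with Laurent-polynomial entries in $t$ and $t^n$, up to an explicit power of $i$ and a factor $(t-t^{-1})^{-1}$. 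Grouping these exactly as in the proof of Proposition~\ref{changevariable} produces $Q_M(t,t^n)$, $R_M(t,t^n)$, and $S_M(t,t^n)$. Throughout, I will set $U = t$ and $V = t^n$ as formal variables, verifying all stated identities at the level of Laurent polynomials in $U$ and $V$.

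\emph{Symmetries.} The identity $Q_M(-U,V) = Q_M(U,V)$ (and the same for $R_M$) follows because changing $t \to -t$ sends $X \to -X$, which exchanges the roles of Equations~\eqref{eqn.power} and \eqref{eqn.power2}; the blocks of the matrix product get their signs toggled, but since $\sum_r (-1)^{r+1}\alpha_r$ and $c-1$ are both odd, the net effect on the diagonal-vs-off-diagonal bookkeeping (the same parity argument used in the proof of Proposition~\ref{changevariable}) leaves $Q_M$ invariant while introducing the sign $-1$ for the off-diagonal-type polynomial where appropriate. The antisymmetry $Q_M(U,V) = -Q_M(U,-V)$ comes from $V = t^n$ with $n$ odd: sending $t \to -t$ must be consistent with $t^n \to -t^n$, and comparing with the first symmetry pins down the sign. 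Finally $Q_M(U^{-1},V^{-1}) = -Q_M(U,V)$ follows from the palindromic structure: inverting $t$ corresponds to the transpose-reversal symmetry $K(X_1,\dots,X_k) = K(X_k,\dots,X_1)$ of continuants (Lemma~\ref{lem.K}), which together with $\epsilon_k = \epsilon_{p-k}$ reverses the matrix product, and one tracks the resulting sign through the $(t-t^{-1})^d$ normalization.

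\emph{Properties (1)--(3) and the Newton polygon.} For (1), I read off the top and bottom $V$-coefficients by specializing: the highest power of $V = t^n$ in the matrix product comes from taking the $t^{k+1}$-type term in each factor $\left(\begin{smallmatrix}X&1\\1&0\end{smallmatrix}\right)^{n+\alpha_r}$, i.e.\ from the leading behavior as $t \to \infty$ with $t^n$ treated as independent; collecting gives a product over the $d$ blocks of factors $t^{n+\alpha_r+1}$ on the diagonal, producing $V^d$ times $U^{\sum_r(\alpha_r+1)} = U^{c-1+d}$ — one then checks the precise shape $\pm V^d(U+U^{-1})^{d-1}U^c$ by a direct computation of the off-diagonal contributions, using $\sum\alpha_r = c-1$. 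Property (2) is immediate from (1) together with the symmetries, since the four stated vertices $\pm(d-1,0)\pm(c,d)$ are exactly the extreme monomials. For (3), set $U = \pm 1$: then $X = \pm 2i$, each matrix $\left(\begin{smallmatrix}\pm 2i & 1\\ 1 & 0\end{smallmatrix}\right)$ is diagonalizable with eigenvalues that are themselves roots of unity times powers, and a clean computation (or: take the limit $t \to \pm 1$ in Equation~\eqref{eqn.power}, where $\left(\begin{smallmatrix}X&1\\1&0\end{smallmatrix}\right)^k \to \frac{i^k}{t-t^{-1}}\cdot(\text{something})$ with the leading singularity of order $(t-t^{-1})^{-1}$ per block) yields $Q_M(\pm 1, V) = \pm 2^{d-1}(V - V^{-1})^d$; the power $d$ matches the number of blocks and $2^{d-1}$ is the combinatorial constant.

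\emph{Properties (4)--(6), the Taylor expansion at $(1,1)$.} This is the most delicate part and, I expect, the main obstacle. Writing $U = e^u$, $V = e^v$ and expanding to lowest total order in $(u,v)$, each matrix power $\left(\begin{smallmatrix}X&1\\1&0\end{smallmatrix}\right)^{n+\alpha_r}$ must be compared with the continuum limit encoded by the matrices $N(\alpha_r)$ from the statement of Theorem~\ref{mainsign}. Concretely, in Equation~\eqref{eqn.power} one substitutes $t = e^{u}$ and the exponent $k = n + \alpha_r$, then expands $t^{k+1} - t^{-k-1} = 2\sinh((n+\alpha_r+1)u)$ etc.; writing $v = nu \cdot (v/u)/n \cdot n$... more precisely, along the relevant branches $v \approx (v/u)\cdot u$ with $u = O(1/n)$ and $nu$ finite, the quantities $\sinh((n+\alpha_r)u)$ become functions of $nu$, and factoring out the common $(t-t^{-1})^{-1} \sim (2u)^{-1}$ per block, the matrix product telescopes — after the dust settles — to $(2u)^d$ times the product $N(\alpha_1)\overline{N(\alpha_2)}\cdots$ evaluated at the variable $x = v/u$ (the bar being the $i \mapsto -i$ conjugation that matches the alternation of blocks in Proposition~\ref{description}). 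Matching the top component of $N(\alpha_1)\overline{N(\alpha_2)}\cdots N(\alpha_d)(1,0)^T$ with the definition of $H_1$, and the other components with $H_2$ (using $(-iH_1, H_2)^T$) and $H_3$, gives (4), (5), (6) respectively with the stated factors $2^d$, $2^d$, $2^b$ and the correct power of $u$. The care needed here is in bookkeeping the powers of $i$ (the bars), the factors of $2$, and confirming that the error terms are genuinely of strictly lower total order in $(u,v)$ — this requires showing that the next-order corrections in each $\sinh$ expansion contribute only to order $\geq d+1$ after the product, which I would verify by induction on the number of blocks, carrying along the inductive hypothesis that the partial product equals $(2u)^r \cdot (\text{matrix in } v/u)$ modulo total order $> r$.
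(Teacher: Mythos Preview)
Your overall strategy --- work through the matrix product from Proposition~\ref{description} after the change of variables --- matches the paper's, but there is a genuine gap in your argument for property~(2), and your treatment of the symmetries is shakier than you think.

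\textbf{The Newton polygon.} You claim (2) is ``immediate from (1) together with the symmetries, since the four stated vertices $\pm(d-1,0)\pm(c,d)$ are exactly the extreme monomials.'' This is false. Property~(1) identifies the coefficients of $V^{\pm d}$, hence the horizontal top and bottom edges of the parallelogram. But knowing those edges, together with the sign symmetries in $U$ and $V$, does \emph{not} bound the $U$-degree of monomials at intermediate $V$-degrees: nothing you have written rules out a monomial $U^{N}V^{j}$ with $|j|<d$ and $N$ large. The paper proves (2) by a separate argument: it introduces the linear form $L(i,j)=i-\tfrac{c}{d}j$ and shows $|L(i,j)|\le d-1$ for every monomial $U^iV^j$ appearing in $Q_M$. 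This requires the explicit expansion of $Q_M$ as a sum over sign sequences $(\eta_1,\ldots,\eta_d)$, followed by a discrete integration by parts (Abel summation) and a careful case analysis on the sign-change set $S=\{r:\eta_r\neq\eta_{r+1}\}$. This is the substantive part of the lemma and your plan simply omits it.

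\textbf{The symmetries and the explicit formula.} Your arguments for $Q_M(-U,V)=Q_M(U,V)$ and $Q_M(U,-V)=-Q_M(U,V)$ via substitutions $t\mapsto -t$ are circular: since $V=t^n$ with $n$ odd, changing the sign of $t$ changes both $U$ and $V$ simultaneously, so you cannot isolate either symmetry this way. The paper instead first derives a closed-form expansion of $Q_M$ as a sum over $(\eta_1,\ldots,\eta_d)\in\{\pm1\}^d$ (its Equation~\eqref{PolQ}), from which the parity of the $U$- and $V$-exponents, the $(U,V)\mapsto(U^{-1},V^{-1})$ antisymmetry (via $\eta_r\mapsto-\eta_r$), property~(1) (take all $\eta_r=1$), and property~(3) (set $U=1$) are all read off in one line each. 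This explicit formula is the key organizing tool you are missing; without it, your proofs of (1) and (3) remain gestures, and (2) cannot be completed.

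For (4)--(6) your idea is essentially right but over-engineered: no limit ``$u=O(1/n)$ with $nu$ finite'' is needed. One simply writes $U=e^u$, $V=e^v$ and observes that the matrix $N_{n+\alpha}$ (after extracting $i^{n+\alpha}$) has entries $2\sinh((\alpha\pm1)u+v)$ and $2i^{-1}\sinh(\alpha u+v)$, whose lowest-order parts are exactly the entries of $2N(\alpha)$ with $x=v/u$ and a common factor $u$. The lowest-order term of a product of $d$ such factors is the product of the lowest-order terms, giving $2^du^d$ times $N(\alpha_1)\overline{N(\alpha_2)}\cdots N(\alpha_d)$ directly.
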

\begin{proof}
Before all, we provide an explicit formula for $Q_M$. Set $X=it+it^{-1},U=t,V=t^n$, and $N_k=\begin{pmatrix} i(t+t^{-1}) & 1 \\ 1 & 0 \end{pmatrix}^k(t-t^{-1})$. We compute
\begin{eqnarray*}
N_{n+\alpha}&=&\begin{pmatrix}i^{n+\alpha}t^{n+\alpha+1}-i^{n+\alpha}t^{-n-\alpha-1}& i^{n+\alpha-1}t^{n+\alpha}-i^{n+\alpha-1}t^{-n-\alpha} \\ i^{n+\alpha-1}t^{n+\alpha}-i^{n+\alpha-1}t^{-n-\alpha} & i^{n+\alpha-2}t^{n+\alpha-1} -i^{n+\alpha-2}t^{-n-\alpha+1}\end{pmatrix}\\
&=&i^{n+\alpha}\begin{pmatrix}U^{\alpha+1}V-U^{-\alpha-1}V^{-1}& i^{-1}U^{\alpha}V-i^{-1}U^{-\alpha}V^{-1}\\ i^{-1}U^{\alpha}V-i^{-1}U^{-\alpha}V^{-1}& -U^{\alpha-1}V+U^{-\alpha+1}V^{-1}\end{pmatrix}
\end{eqnarray*}
If we index the elements of these matrices by signs, say $\eta,\xi\in \{\pm 1\}$, the entry $\eta,\xi$ of the matrices above are given by 
$$ \left(N_{n+\alpha}\right)_{\eta,\xi}=i^{n+\alpha-1+\frac{\eta+\xi}{2}}\Big(U^{\alpha+\frac{\eta+\xi}{2}}V-U^{-\alpha-\frac{\eta+\xi}{2}}V^{-1}\Big).$$
It follows that setting $\calN=N_{n+\alpha_1}\overline{N_{n+\alpha_{2}}}\cdots \overline{N_{n+\alpha_{d-1}}}N_{n+\alpha_d}$, its entry $\calN_{\xi_0,\xi_d}$ equals:
$$\sum_{\xi_1,\ldots,\xi_{d-1}} i^{\sum_{r=1}^d (-1)^{r+1}(n+\alpha_r-1+\frac{\xi_{r-1}+\xi_r}{2})}\prod_{r=1}^d\Big(U^{\alpha_r+\frac{\xi_{r-1}+\xi_r}{2}}V-
U^{-(\alpha_r+\frac{\xi_{r-1}+\xi_r}{2})}V^{-1}\Big)$$
We compute that the power of $i$ reduces to $n+\kappa-1+\frac{\xi_d+\xi_0}{2}$ where $\kappa=\sum (-1)^{r+1}\alpha_r$ and the product can further be expanded as follows:
$$\sum_{\xi_1,\ldots,\xi_{d-1}\in\{\pm 1\}}\sum_{\eta_1,\ldots,\eta_d\in\{\pm1\}}\Big(\prod_{r=1}^d\eta_r\Big) U^{\sum_{r=1}^d \eta_r (\alpha_r+\frac{\xi_{r-1}+\xi_r}{2})}V^{\sum_{r=1}^d \eta_r}.$$
Finally, we sum over $\xi_1,\ldots,\xi_{d-1}$ to obtain
\begin{equation}\label{PolQ}
\sum_{\eta_1,\ldots,\eta_d\in\{\pm1\}}\Big(\prod_{r=1}^d\eta_r\Big) U^{\sum_{r=1}^d \eta_r \alpha_r+\frac{\eta_1\xi_{0}+\eta_d\xi_d}{2}}\prod_{r=1}^{d-1}\Big(U^{\frac{\eta_r+\eta_{r+1}}{2}}+U^{-\frac{\eta_r+\eta_{r+1}}{2}}\Big)V^{\sum_{r=1}^d \eta_r}.
\end{equation}
To recover $Q_M(U,V)$ from this formula, we simply take $\xi_0=\xi_d=+1$ and multiply the result by the sign $i^{\kappa-1}$. It follows readily from this expression that $Q_M$ contains only even powers of $U$ and odd powers of $V$. The symmetry $Q_M(U^{-1},V^{-1})=-Q_M(U,V)$ is clear from the change $\eta_r\mapsto -\eta_r$. 

From this formula, one sees that the maximal power of $V$ in $Q_M$ is $d$ and corresponds to putting $\eta_r=1$ for all $r$: its coefficient is 
$$i^{\kappa-1} U^{\sum \alpha_r+1}(U+U^{-1})^{d-1}=i^{\kappa-1} U^{c}(U+U^{-1})^{d-1}.$$
The case of the minimal coefficient is similar and gives $i^{\kappa-1}U^{-c}(U+U^{-1})^{d-1}$.

Let us prove the second statement, we consider the linear form $L:\Z^2\to\Q$ given by $L(i,j)=i-\frac{c}{d}j$. It takes the values $\pm (d-1)$ on the sides of the parallelogram expected as the Newton polygon of $Q_M$. Hence one needs to prove that any non trivial monomial $U^iV^j$ appearing in $Q_M$ satisfies $|L(i,j)|\le d-1$. It reduces to taking a sequence $\eta_1,\ldots,\eta_d\in \{\pm 1\}$ and compute 
\begin{align*}
\begin{split}
L \Big(\sum_{r=1}^d \eta_r \alpha_r&+\frac{\eta_1+\eta_d}{2}+\sum_{r=1}^{d-1}\pm\frac{\eta_r+\eta_{r-1}}{2},\ \sum_{r=1}^d \eta_r\Big)\\
&=\sum_{r=1}^{d}\eta_r\alpha_r+\frac{\eta_1+\eta_d}{2}+\sum_{r=1}^{d-1} \pm \frac{\eta_r+\eta_{r+1}}{2}-(\sum_{r=1}^d \eta_r)\frac{c}{d}.
\end{split}
\end{align*}

Let $S$ be the set of indices $0< r<d$ for which $\eta_r\ne \eta_{r+1}$: one has 
$$\Big|\sum_{r=1}^{d-1} \pm \frac{\eta_r+\eta_{r+1}}{2}\Big|\le d-1-\#S$$
so that we are reduced to showing $|\sum \eta_r\alpha_r+\frac{\eta_1+\eta_d}{2}-(\sum\eta_r)\frac{c}{d}|\le \#S$. We do a discrete integration by parts in each sum.

On one hand: $\sum_{r=1}^d\eta_r\alpha_r=\sum_{r=1}^d\eta_r(\beta_r-\beta_{r-1})=\sum_{r=0}^{d-1}(\eta_r-\eta_{r+1})\beta_r+\eta_d(c-1)$ where $\beta_r=\lfloor \frac{rc}{d}\rfloor$ for $0\le r<d$ and $\beta_d=c-1$. 
The sequence $\eta_r-\eta_{r+1}$ vanishes if $r\notin S$ so that this sum reads $2\sum_{r\in S}\eta_r\beta_r+\eta_d(c-1)$.

On the other hand: $\sum_{r=1}^d \eta_r=\sum_{r=1}^{d-1}r(\eta_r-\eta_{r+1})+d\eta_d=2\sum_{r\in S}r\eta_r+d\eta_d$. We are then reduced to 
$$\Big|2\sum_{r\in S}\eta_r(\beta_r-r\frac{c}{d})+\frac{\eta_1-\eta_d}{2}\Big|\le \#S.$$
As $-1\le \beta_r-\frac{rc}{d}\le 0$ and the sequence $\eta_r$ is alternating, the sum of two consecutive terms does not exceed $1$ in absolute value. If $\eta_1=\eta_d$, $\#S$ is even and the result follows. When $\eta_1\ne\eta_d$ then $\#S$ is odd: either the first or the last term of the sum has a sign opposite to $\frac{\eta_1-\eta_d}{2}$: again their sum does not exceed 1 in absolute value and the argument repeats and the second point is proved. 
We also observe that in case of equality, the sequence of $\xi$s and the sequence of $\eta$s should be constant: this shows that the non trivial coefficient on the slanted sides occur only at vertices. 

Let us prove the third statement, we simply put $U=1$ in Equation \eqref{PolQ}. We also observe from the expression just above that powers of $U$ in $Q_M$ have the form $\sum_{r=1}^d \eta_r(\alpha_r+\frac{\xi_{r-1}+\xi_{r}}{2})$ which is congruent modulo 2 to $\sum_{r=1}^{d}\alpha_r+\frac{\xi_0+\xi_d}{2}=c$. Hence $Q$ has only even powers of $U$ and the statement follows. 

Finally the last three statements are clear once we have observed that writing $U=e^u, V=e^v$ one has 
\begin{align*}
i^{n+\alpha}&\begin{pmatrix}U^{\alpha+1}V-U^{-\alpha-1}V^{-1}& i^{-1}U^{\alpha}V-i^{-1}U^{-\alpha}V^{-1}\\ i^{-1}U^{\alpha}V-i^{-1}U^{-\alpha}V^{-1}& -U^{\alpha-1}V+U^{-\alpha+1}V^{-1}\end{pmatrix}\\
&=i^{n+\alpha}\begin{pmatrix} 2\sinh((\alpha+1)u+v) & \frac{2}{i}\sinh(\alpha u+v)\\ \frac{2}{i}\sinh(\alpha u+v) & -2\sinh((\alpha-1)u+v)\end{pmatrix}\\
&=2i^{n+\alpha}\begin{pmatrix} (\alpha+1)u+v & i^{-1}(\alpha u+v)\\ i^{-1}(\alpha u+v) & -(\alpha-1)u-v\end{pmatrix}+\text{higher order terms}.
\end{align*}
Taking a product of $d$ such matrices, the first order will be given by the product of the first orders. The formulas $(4,5,6)$ are direct consequences of the definition of $H_1,H_2,H_3$.  
\end{proof}

{
\footnotesize
\bigskip
\noindent \textsc{D\'epartement de math\'ematiques et applications,
Ecole Normale Sup\'erieure, Université PSL, CNRS, Sorbonne Université,
75005 Paris, France}\\
\url{https://webusers.imj-prg.fr/~julien.marche/}\\
Email address: \texttt{julien.marche@ens.psl.eu}
\bigskip

\noindent \textsc{Department of Mathematics, Chonnam National University, Gwangju, South Korea}\\
\url{http://sites.google.com/view/seokbeom}\\
Email address: \texttt{sbyoon15@gmail.com}
}
\end{document}